\newif\ifhideproofs%
\let\oriwrite=\write
\renewcommand{\exp}[1]{\operatorname{exp}\left(#1\right)} 
\providecommand{\argmax}{\mathop\mathrm{arg max}} 
\providecommand{\argmin}{\mathop\mathrm{arg min}}
\providecommand{\tr}{\mathop\mathrm{tr}}
\newcommand\var{\mathrm{Var}}
\newcommand\cov{\mathrm{cov}}
\newcommand{\eps}{\varepsilon}
\newcommand{\diam}{\mathrm{diam}}
\newcommand{\wh}{\widehat}
\newcommand{\wt}{\widetilde}
\newcommand{\p}{{\rm I}\kern-0.18em{\rm P}}
\newcommand{\1}{{\rm 1}\kern-0.24em{\rm I}}
\newcommand{\E}{{\rm I}\kern-0.18em{\rm E}}
\newcommand{\R}{{\rm I}\kern-0.18em{\rm R}}
\newtheorem{theorem}{Theorem}
\newtheorem{lemma}{Lemma}
\newtheorem{proposition}{Proposition}
\newtheorem{remark}{Remark}
\newtheorem{fact}{Fact}
\newcommand{\rom}[1]{\uppercase\expandafter{\romannumeral #1\relax}}
\newcommand{\be}{\begin{equation*}}
\newcommand{\ee}{\end{equation*}}
\newcommand{\ben}{\begin{equation}}
\newcommand{\een}{\end{equation}}
\newcommand{\bmln}{\begin{multline*}}
\newcommand{\emln}{\end{multline*}}
\def\l{\left}
\def\r{\right}
\newcommand{\m}{\mathcal}
\newcommand{\mb}{\mathbb}
\newcommand{\bL}{\widebar L}
\newcommand{\hL}{\widehat{L}}
\newcommand{\htheta}{\widehat \theta}
\newcommand{\pr}[1]{\mathbb{P}{\left(#1\right)}}
\newcommand{\pd}{\partial}
\def\ml#1{\begin{multline*}{#1}\end{multline*}}
\def\mln#1{\begin{multline}{#1}\end{multline}}
\newenvironment{customassumption}[1]
  {\innercustomthm}
  {\endinnercustomthm}
\begin{document}

\begin{frontmatter}
\title{Asymptotic normality of robust risk minimizers}
\runauthor{ }
\runtitle{Robust ERM}

\begin{aug}
\author{\fnms{Stanislav} \snm{Minsker}\ead[label=e1,mark]{minsker@usc.edu} \thanksref{e1,a}}

\address[a]{Department of Mathematics, University of Southern California \\
\printead{e1}}

\thankstext{t3}{Author acknowledges support by the National Science Foundation grants DMS CAREER-2045068 and CCF-1908905.}
\end{aug}

\begin{abstract}
This paper investigates asymptotic properties of algorithms that can be viewed as robust analogues of the classical empirical risk minimization. 
These strategies are based on replacing the usual empirical average by a robust proxy of the mean, such as the (version of) the median of means estimator. It is well known by now that the excess risk of resulting estimators often converges to zero at optimal rates under much weaker assumptions than those required by their ``classical'' counterparts. 
However, less is known about the asymptotic properties of the estimators themselves, for instance, whether robust analogues of the maximum likelihood estimators are asymptotically efficient. We make a step towards answering these questions and show that for a wide class of parametric problems, minimizers of the appropriately defined robust proxy of 
the risk converge to the minimizers of the true risk at the same rate, and often have the same asymptotic variance, as the estimators obtained by minimizing the usual empirical risk. 
\end{abstract}

\begin{keyword}[class=MSC]
\kwd[Primary ]{62F35}
\kwd[; secondary ]{62E20, 62H12}
\end{keyword}

\begin{keyword}
\kwd{robust estimation}
\kwd{median of means estimator}
\kwd{adversarial contamination}
\kwd{asymptotic normality}
\kwd{consistency}
\end{keyword}
\end{frontmatter}

\mathtoolsset{showonlyrefs=true}

\section{Introduction.}
\label{sec:intro}

The concept of robustness addresses stability of statistical estimators under various forms of perturbations, such as the presence of corrupted/atypical observations (``outliers'') in the data. 
The questions related to robustness in the framework of statistical learning theory have recently seen a surge in interest, both from the theoretical and practical perspectives, and resulted in the development of novel algorithms. 
These new robust algorithms are characterized by the fact that they provably work under minimal assumptions on the underlying data-generating mechanism, often requiring the existence of moments of low order only. 
Majority of the existing works focused on the upper bounds for the risk of the estimators (such as the classification or prediction error) produced by the algorithms, while in this paper we are interested in the asymptotic properties of the estimators themselves. The asymptotic viewpoint allows one to gauge efficiency of the estimators and understand the magnitude of constants appearing in the bounds, as opposed to just studying the form of dependence of the bounds on the parameters of interest (sample size, dimension, etc.)

Next, we introduce the mathematical framework used in the exposition. 
Let $(S,\m S)$ be a measurable space, and let $X\in S$ be a random variable with distribution $P$. 
Suppose that $X_1,\ldots,X_N$ are i.i.d. copies of $X$. Moreover, assume that $\m L = \l\{ \ell(\theta,\cdot), \ \theta\in \Theta\subseteq \mb R^d \r\}$ is a class of measurable functions from $S$ to $\mb R$ indexed by an open subset of $\mb R^d$. 
Population versions of many estimation problems in statistics and statistical learning, such as maximum likelihood estimation and regression, can be formulated as risk minimization of the form 
\ben
\label{eq:problem}
\mb E \, \ell(\theta,X) \to \min_{\theta\in \Theta}.
\een
In particular, when $\l\{ p_\theta, \ \theta\in \Theta\r\}$ is a family of probability density functions with respect to some $\sigma$-finite measure $\mu$ and $\ell(\theta,\cdot) = -\log p_\theta(\cdot)$, the resulting problem corresponds to maximum likelihood estimation. 
In what follows, we will set $L(\theta)$ to be the risk associated with the parameter $\theta$, namely $L(\theta) = \mb E \ell(\theta,X)$. 
Throughout the paper, we will assume that the minimum in problem \eqref{eq:problem} is attained at a unique point $\theta_0 \in \Theta$. 
The true distribution $P$ is typically unknown, and an estimator of $\theta_0$ is obtained via minimizing the \emph{empirical risk}, namely,
\ben
\label{eq:erm-standard}
\wt \theta_N := \argmin_{\theta\in \Theta} L_N(\theta), 
\een
where $L_N(\theta) := \frac{1}{N}\sum_{j=1}^N \ell\l(\theta,X_j\r)$. 
If the marginal distributions of the process $\{\ell(\theta,\cdot), \ \theta\in \Theta\}$ are heavy-tailed, meaning that they possess finite moments of low order only, then the error $|L_N(\theta)-L(\theta)|$ can be large with non-negligible probability, motivating the need for alternative proxies for the risk $L(\theta)$. Another scenario of interest corresponds to the \emph{adversarial contamination} framework, where the initial dataset of cardinality $N'$ is merged with a set of $\m O<N'$ outliers generated by an adversary who has complete knowledge of the underlying distribution and an opportunity to inspect the data, and the combined dataset of cardinality $N=N'+\m O$ is presented to the algorithm responsible for constructing the estimator of $\theta_0$. In what follows, the proportion of outliers will be denoted by 
$\kappa:=\frac{\m O}{N}$. 
Similarly to the heavy-tailed scenario, the empirical loss $L_N(\theta)$ is not a robust proxy for $\mb E \ell(\theta,X)$ in this case, therefore estimation and inference results based on minimizing $L_N(\theta)$ may be unreliable. 

One may approach the problem of estimating $\theta_0$ robustly from different angles. 
One class of popular methods consists of robust versions of the gradient descent algorithm for the optimization problem \eqref{eq:problem}, where the gradient $\nabla L(\theta_k)$ is estimated on each iteration $k$; for example, this approach has been explored by \cite{prasad2018robust,chen2017distributed,yin2018byzantine,alistarh2018byzantine}, among others. 
Another technique (the one that we investigate in this paper) is based on replacing the average $L_N(\cdot)$ by a robust proxy of $L(\theta)$.  
Its advantage is the fact that we only need to estimate a real-valued quantity $L(\theta)$, as opposed to the high-dimensional gradient vector $\nabla L(\theta)$. 
On the other hand, favorable properties, such as convexity, that are ``inherited'' by the formulation \eqref{eq:erm-standard} from \eqref{eq:problem}, are usually lost in this case. 
Several representative papers that explore this direction include the works by \cite{audibert2011robust,lerasle2011robust,brownlees2015empirical,lugosi2016risk,lecue2017robust,cherapanamjeri2019algorithms,mathieu2021excess}; also, see an excellent survey paper by \cite{lugosi2019mean}.
Instead of the empirical risk $L_N(\theta)$, these works employ robust estimators of the risk such as the median of means estimator \citep{Nemirovski1983Problem-complex00,alon1996space,devroye2016sub} or Catoni's estimator \citep{catoni2012challenging}. 

In this paper, we study estimators based on the modification of the median of means principle introduced in \citep{minsker2017distributed} combined with the idea behind the ``median of means tournaments'' \citep{lugosi2016risk} and the closely related ``min-max'' robust estimators \citep{audibert2011robust,lecue2017robust}. The latter are based on an observation that $\theta_0$ can be alternatively obtained via 
\ben
\label{eq:difference}
\theta_0 = \argmin_{\theta\in\Theta}\max_{\theta'\in \Theta}\l( L(\theta) - L(\theta')\r).
\een
Therefore, an estimator of $\theta_0$ can be constructed by replacing the difference $L(\theta,\theta'):=L(\theta) - L(\theta')$ by its robust proxy constructed as follows. Let $k\leq N/2$ be an integer, and assume that $G_1,\ldots,G_k$ are disjoint subsets of the index set $\{1,\ldots,N\}$ of cardinality $|G_j| = n\geq \lfloor N/k\rfloor$ each. For $\theta\in \Theta$, let 
\[
\bL_j (\theta) := \frac{1}{n}\sum_{i\in G_j} \ell(\theta, X_i) 
\]
be the empirical risk evaluated over the subsample indexed by $G_j$. Assume that $\rho:\mb R\mapsto \mb R_+$ is a convex, even function that is increasing on $(0,\infty)$ and such that its (right) derivative is bounded. Let $\{\Delta_n\}_{n\geq 1}$ be a non-decreasing positive sequence of ``scaling factors'' such that $\Delta_n = o(\sqrt{n})$ and $\Delta_\infty := \lim_{n\to\infty}\Delta_n\in (0,\infty]$, and define  
\ben
\label{eq:M-est-2}
\hL(\theta,\theta')\in\argmin_{z\in \mb R}\sum_{j=1}^k \rho\l(\sqrt{n}\,\frac{\bL_j (\theta) - \bL_j(\theta') - z}{\Delta_n}\r).
\een
For example, the choice $\Delta_n \asymp \log(n)$ suffices for all results of the paper to hold (in fact, it suffices for $\Delta_\infty$ to be a sufficiently large constant); we will make a remark regarding the practical aspects of setting $\Delta_n$ below. The estimator $\hL(\theta,\theta')$ is what we referred to as the robust proxy of $L(\theta,\theta')$, where robustness is justified by the fact that the error $\l|\hL(\theta,\theta') - L(\theta,\theta')\r|$ satisfies non-asymptotic exponential deviation bounds under minimal assumptions on the tails of the random variables 
$\ell(\theta,X)-\ell(\theta',X)$ and the ability of $\hL(\theta,\theta')$ to resist adversarial outliers. 
For example, Theorem 3 in \citep{minsker2017distributed} essentially states that whenever $\Delta_n\gtrsim  \var^{1/2}\l( \ell(\theta,X)-\ell(\theta',X)\r)$ and for all $s\lesssim k$, 
\[
\l| \hL(\theta,\theta') - L(\theta,\theta') \r| \lesssim \sigma(\theta,\theta')\sqrt{\frac{s}{N}} + \Delta_n\l(\frac{k}{N} + \frac{\m O\sqrt{n}}{N}\r)
\] 
with probability at least $1-e^{-s}$, assuming that $\mb E |\ell(\theta,X)-\ell(\theta',X)|^3<\infty$ and where $\lesssim$ denotes the inequality up to absolute numerical constants; similar guarantees also hold uniformly over $\theta,\theta'\in \Theta$; note that setting $\Delta_n \asymp  \sigma(\theta,\theta')$ yields the most robust estimator. Given the robust proxy $\wh L(\theta,\theta')$ of $L(\theta,\theta')$, an analogue of the classical empirical risk minimizer $\wt\theta_N$ can be obtained via
\ben
\label{eq:M-est-erm2}
\htheta_{n,k} = \argmin_{\theta\in\Theta}\sup_{\theta'\in \Theta} \hL(\theta,\theta').
\een
Simple sufficient conditions for the existence of $\htheta_{n,k}$ are discussed in the appendix; in principle, one could consider  near-minimizers instead, however, we avoid this route due to the extra layer of technicalities it brings. The idea behind considering differences of the risks and defining $\theta_0$ via \eqref{eq:difference} is related to the fact that the estimators \eqref{eq:M-est-2} of $L(\theta)$, unlike their traditional counterparts $L_N(\theta)$, are non-linear: if we set $\hL(\theta) = \argmin_{z\in \mb R}\sum_{j=1}^k \rho\l(\sqrt{n}\,\frac{\bL_j (\theta) - z}{\Delta_n}\r)$, then $\hL(\theta,\theta')\ne \hL(\theta) - \hL(\theta')$. 

Related approaches based on direct minimization of $\hL(\theta)$ have been previously investigated by \cite{brownlees2015empirical,holland2017robust,lecue2018robust,mathieu2021excess}, where the main object of interest was the excess risk $\m E(\wh\theta_{n,k}):=L(\wh\theta_{n,k}) - L(\theta_0)$. It has been  recognized however that non-linearity of $\hL(\theta)$ often results in sub-optimal rates, while the tournament-type procedures avoid these shortcomings. 
In the present work, we will be interested in the asymptotic behavior of the error $\wh\theta_{n,k} - \theta_0$, rather than the excess risk: in particular, we will establish asymptotic normality of the sequence $\sqrt{N}\l( \wh\theta_{n,k} - \theta_0 \r)$ and demonstrate that robust estimators can still be efficient under essentially the same set of sufficient conditions as required by the standard M-estimators \citep{van2000asymptotic}.   
Let us mention that the nonlinear nature of the estimator $\hL(\theta,\theta')$ makes the proofs more technical compared to the classical theory of M-estimators based on empirical risk minimization. Our arguments rely on Bahadur-type representations for $\wh L(\theta,\theta')$ whose remainder terms admit tight uniform bounds.



\subsection{Notation.}
\label{section:def}

Absolute constants will be denoted $c,c_1,C,C_1,C'$, etc., and may take different values in different parts of the paper. 
Given $a,b\in \mb R$, we will write $a\wedge b$ for $\min(a,b)$ and $a\vee b$ for $\max(a,b)$. 
For a function $f:\mb R^d\mapsto\mb R$, define 
\[
\argmin_{y\in\mb R^d} f(y) := \{y\in\mb R^d: f(y)\leq f(x)\text{ for all }x\in \mb R^d\},
\]
and $\|f\|_\infty:=\mathrm{ess \,sup}\{ |f(y)|: \, y\in \mb R^d\}$. Moreover, $\mathrm{Lip}(f)$ will stand for the Lipschitz constant of $f$; if $d=1$ and $f$ is $m$ times differentiable, $f^{(m)}$ will denote the $m$-th derivative of $f$. 
For a function $g(\theta,x)$ mapping $\mb R^d\times \mb R$ to $\mb R$, $\pd_\theta g$ will denote the vector of partial derivatives with respect to the coordinates of $\theta$; similarly, $\pd^2_\theta g$ will denote the matrix of second partial derivatives.

For $x\in \mb R^d$, $\|x\|$ will stand for the Euclidean norm of $x$, $\|x\|_\infty:=\max_j |x_j|$, and for a matrix $A\in \mb R^{d\times d}$, $\|A\|$ will denote the spectral norm of $A$. We will frequently use the standard big-O and small-o notation, as well as their in-probability siblings $o_P$ and $O_P$. For vector-valued sequences $\{x_j\}_{j\geq 1}, \ \{y_j\}_{j\geq 1}\subset \mb R^d$, expressions $x_j = o(y_j)$ and $x_j = O(y_j)$ are assumed to hold coordinate-wise. For a square matrix $A\in \mb R^{d\times d}$, $\tr A:=\sum_{j=1}^d A_{j,j}$ denotes the trace of $A$. 

Given a function $g:\mb R\mapsto \mb R$, measure $Q$ and $1\leq p<\infty$, we set $\|g\|^p_{L_p(Q)}:= \int_\mb R |g(x)|^p dQ$. 
For i.i.d. random variables $X_1,\ldots,X_N$ distributed according to $P$, $P_N:=\frac{1}{N}\sum_{j=1}^N \delta_{X_j}$ will stand for the empirical measure; here, $\delta_{X}(g):=g(X)$. The expectation with respect to a probability measure $Q$ will be denoted $\mb E_Q$; if the measure is not specified, it will be assumed that the expectation is taken with respect to $P$, the distribution of $X$. Given $f:S\mapsto \mb R^d$, we will write $Qf$ for $\int f dQ\in \mb R^d$, assuming that the last integral is calculated coordinate-wise. 
For $\theta,\theta'\in \Theta$, let $\sigma^2(\theta,\theta') = \var\l( \ell(\theta,X)-\ell(\theta',X)\r)$ and for $\Theta'\subseteq \Theta$, define 
$\sigma^2(\Theta'):=\sup_{\theta,\theta'\in \Theta'} \sigma^2(\theta,\theta')$. 

Finally, we will adopt the convention that the infimum over the empty set is equal to $+\infty$.  
Additional notation and auxiliary results are introduced on demand.

\section{Statements of the main results.}

We begin by listing the assumptions on the model; these conditions are similar to the standard assumptions made in the parametric estimation  framework \citep{van2000asymptotic,wellner1}. 
The first assumption lists the requirements for the loss function $\rho$ (note that the choice of this function is completely determined by the statistician).
\begin{customassumption}{1}
\label{ass:1}
The function $\rho: \mb R\mapsto \mb R$ is convex, even, and such that
\begin{enumerate}
\item[(i)] $\rho'(z)=z$ for $|z|\leq 1$ and $\rho'(z)=\mathrm{const}$ for $z\geq 2$.
\item[(ii)] $z - \rho'(z)$ is nondecreasing; 
\item[(iii)] $\rho^{(5)}$ is bounded and Lipschitz continuous.
\end{enumerate}
\end{customassumption}
An example of a function $\rho$ satisfying required assumptions is given by ``smoothed'' Huber's loss defined as follows. Let
\[
H(y)=\frac{y^2}{2} I\{|y|\leq 3/2\} + \frac{3}{2}\l(|y| - \frac{3}{4}\r) I\{|y|>3/2\}
\] 
be the usual Huber's loss. Moreover, let $\psi$ be the mollifier $\psi(x) = C \exp{ -\frac{4}{1 - 4x^2}} \, \l\{ |x|\leq \frac{1}{2} \r\}$ where $C$ is chosen so that $\int_\mb R \psi(x)dx = 1$. Then $\rho$ given by the convolution $\rho(x) = (h\ast \psi)(x)$ satisfies assumption \ref{ass:1}. 

\begin{remark}
The classical median of means estimator \citep{Nemirovski1983Problem-complex00,alon1996space} corresponds to the choice $\rho(x)=|x|$ that does not satisfy smoothness assumptions imposed above. 
Asymptotic behavior of the estimators corresponding to this loss is left as an open problem; numerical evidence suggesting that asymptotic normality does not hold in this case is presented in \citep{yao2022median}.
\end{remark}

\begin{customassumption}{2}
\label{ass:2}
The Hessian $\pd^2_\theta L(\theta_0)$ exists and is strictly positive definite. 
\end{customassumption}
This assumption ensures that in a sufficiently small neighborhood of $\theta_0$, 
$c(\theta_0)\|\theta-\theta_0\|^2 \leq L(\theta) - L(\theta_0) \leq C(\theta_0)\|\theta-\theta_0\|^2$ for some $0<c(\theta_0)\leq C(\theta_0)<\infty$. 
The following two conditions allow one to control the ``complexity'' of the class $\{\ell(\theta,\cdot), \ \theta\in\Theta\}$.
\begin{customassumption}{3}
\label{ass:3}
For every $\theta\in \Theta$, the map $\theta'\mapsto\ell(\theta',x)$ is differentiable at $\theta$ for $P$-almost all $x$ (where the exceptional set of measure $0$ can depend on $\theta$), with derivative 
$\pd_\theta \ell(\theta,x)$. Moreover, $\forall \theta\in \Theta$, the envelope function
$\m V(x;\delta):=\sup_{\|\tilde\theta - \theta\|\leq\delta} \l\| \pd_\theta \ell(\tilde\theta,x)\r\|$ of the class $\l\{ \pd_\theta \ell(\tilde\theta,\cdot): \|\tilde\theta - \theta\|\leq \delta\r\}$ satisfies $\mb E \m V^{2}(X;\delta)<\infty$ for sufficiently small $\delta=\delta(\theta)$. 
\end{customassumption}
An immediate implication of this assumption is the fact that the function $\theta\mapsto \ell(\theta,x)$ is locally Lipschitz. It other words, for any $\theta\in \Theta$, there exists a ball $B(\theta,r(\theta))$ of radius $r(\theta)$ such that for all $\theta_1, \, \theta_2\in B(\theta,r(\theta))$, 
$\lvert \ell(\theta_1,x) - \ell(\theta_2,x)\rvert \leq \m V(x;r(\theta))\|\theta_1 - \theta_2\|$. In particular, this condition suffices to prove consistency of the estimators considered in this work and is similar to the classical assumptions used in the analysis of M-estimators, e.g. see the book by \cite{van2000asymptotic}.
The final assumption that we impose allows us to treat non-compact parameter spaces. 
Essentially, we require that the estimator $\wh\theta_{n,k}$ defined via \eqref{eq:M-est-erm2} belongs to a compact set of sufficiently large diameter with high probability, namely,
\begin{equation*}
\lim_{R\to\infty} \limsup_{n,k\to\infty} \pr{\l\| \wh\theta_{n,k} - \theta_0 \r\|\geq R} = 0 \text{ and }
\end{equation*}
The following condition is sufficient for the display above to hold:
\begin{customassumption}{4}
\label{ass:4}
Given $t,R>0$ and a positive integer $n$, define
\[
B(n,R,t) := \pr{\inf_{\theta\in \Theta, \, \|\theta - \theta_0\|\geq R} \frac{1}{n}\sum_{j=1}^n \ell(\theta,X_j) <  \mb E\ell(\theta_0,X) + t }.
\] 
Then $\lim_{R\to\infty} \limsup_{n\to\infty} B(n,R,t) = 0$ for some $t>0$.
\end{customassumption}
Requirements similar to assumption \ref{ass:4} are commonly imposed in the classical framework of M-estimation, \citep[e.g see][]{van2000asymptotic}. 
Of course, when $\Theta$ is compact, assumption \ref{ass:4} holds automatically; another general scenario when assumption \ref{ass:4} is true occurs if the class $\l\{ \ell(\theta,\cdot): \ \theta\in \Theta\r\}$ is Glivenko-Cantelli \citep{wellner1}. Otherwise, it can usually be verified on a case-by-case basis. 
For instance, consider the framework of linear regression, where the data consist of i.i.d. copies of the random couple $(Z,Y)\in \mb R^d\times \mb R$ such that $Y = \langle Z,\theta_\ast\rangle+\eps$ for some $\theta_\ast\in \mb R^d$ and a noise variable $\eps$ that is independent of $Z$ and has variance $\sigma^2$. Moreover, assume that $Z$ is centered and has positive definite covariance matrix $\Sigma$. 
In this case, $\ell(\theta,Z,Y) = \l( Y - \langle Z,\theta \rangle\r)^2 $, and it is easy to see that $\frac{1}{n}\sum_{j=1}^n \ell(\theta,Z_j,Y_j) = \frac{1}{n}\l( \|\vec\eps\|^2 + \| \mb Z(\theta- \theta_\ast)\|^2 - 2\langle \vec\eps,\mb Z(\theta_\ast-\theta)\rangle\r)$, where $\vec\eps =(\eps_1,\ldots,\eps_n)^T$ and $\mb Z\in \mb R^{n\times d}$ has $Z_1,\ldots,Z_n$ as rows. Cauchy-Schwarz inequality combined with a simple relation $2|ab|\leq a^2/2 + 2b^2$ that holds for all $a,b\in \mb R$ yield that 
\[
\frac{1}{n}\sum_{j=1}^n \ell(\theta,Z_j,Y_j) \geq \frac{1}{2n}\| \mb Z(\theta- \theta_\ast)\|^2  - \frac{1}{n}\|\vec\eps\|^2,
\]
hence $\inf_{\|\theta - \theta_\ast\| \geq R} \frac{1}{n}\sum_{j=1}^n \ell(\theta,Z_j,Y_j) \geq \frac{R^2}{2} \inf_{\|u\|=1} \langle \Sigma_n u,u\rangle - \frac{1}{n}\|\vec\eps\|^2$ where $\Sigma_n=\frac{1}{n}\sum_{j=1}^n Z_j Z_j^T$ is the sample covariance matrix. Since $\inf_{\|u\|=1} \langle \Sigma_n u,u\rangle \geq \lambda_{\min}(\Sigma) - \|\Sigma_n - \Sigma\| = \lambda_{\min}(\Sigma) - o_P(1)$ and $ \frac{1}{n}\|\vec\eps\|^2 = O_p(1)$, it is easy to conclude that assumption \ref{ass:4} holds.

We are ready to state the main results regarding consistency and asymptotic normality of the estimator  \eqref{eq:M-est-erm2}.
Recall the adversarial contamination framework defined in section \ref{sec:intro}. 
In all statements below, we assume that 
the sequences $\{k_j\}_{j\geq 1}$ and $\{n_j\}_{j\geq 1}$, corresponding the the number of subgroups and their cardinality respectively, are non-decreasing and converge to $\infty$ as $j\to\infty$, and that the total sample size is $N_j := k_j n_j$.
\begin{theorem}
\label{th:consistency}
Let assumptions \ref{ass:1}, \ref{ass:2}, \ref{ass:3} and \ref{ass:4} be satisfied. Suppose that the number of outliers $\m O_j$ and the scaling sequence are such that 
$\limsup\limits_{j \to\infty}\frac{\m O_j}{k_j}\leq c$ for a sufficiently small absolute constant $c>0$. 
Then the estimator $\wh\theta_{n_j,k_j}$ defined in \eqref{eq:M-est-erm2} is consistent: $\wh\theta_{n_j,k_j} \to \theta_0$ in probability as $j\to\infty$.
\end{theorem}
We remark that the contamination framework considered in Theorem \ref{th:consistency} is quite general: for instance, in the framework if linear regression, $X=(Z,Y)\in \mb R^d\times \mb R$, hence outliers can occur among both the predictor $X$ and response variable $Y$. On the other hand, many classical robust regression methods, such as Huber's regression \citep{huber2011robust}, only allow the outliers among the responses. 
The following theorem constitutes the main contribution of the paper.
\begin{theorem}
\label{th:normality-B}
Assume that the the sample is free of adversarial contamination (that is, $\kappa=0$). Let assumptions 
\ref{ass:1}, \ref{ass:2}, \ref{ass:3} and \ref{ass:4} be satisfied. Then 
\[
\sqrt{N_j}\l( \wh\theta_{n_j,k_j} - \theta_0\r) \xrightarrow{d} N\l(0, D^2(\theta_0)\r) \text{ as } j\to\infty,
\]
where $D^2(\theta_0) = \l[\pd^2_\theta L(\theta_0)\r]^{-1}\Sigma \l[\pd^2_\theta L(\theta_0)\r]^{-1}$ and $\Sigma = \mb E \l[ \pd_\theta \ell(\theta_0,X) \pd_\theta \ell(\theta_0,X)^T\r]$.
\end{theorem}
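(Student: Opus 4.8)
The plan is to reduce the assertion to the classical asymptotic theory of the empirical risk minimizer $\wt\theta_N$ of \eqref{eq:erm-standard}; I suppress the sequence index $j$ and write $n,k$ and $N=nk$. The key point is that, after localizing to a neighbourhood of $\theta_0$ of radius $O_P(N^{-1/2})$, the proxy $\hL(\theta,\theta')$ from \eqref{eq:M-est-2} coincides \emph{exactly} with $L_N(\theta)-L_N(\theta')$. Indeed $\rho'(z)=z$ for $|z|\le 1$ by Assumption \ref{ass:1}(i), and (since $\kappa=0$) every block is clean, so by strict monotonicity in $z$ of $\sum_j\rho'\big(\sqrt n(\bL_j(\theta)-\bL_j(\theta')-z)/\Delta_n\big)$ the minimizer in \eqref{eq:M-est-2} equals $L_N(\theta)-L_N(\theta')$ as soon as all $k$ block residuals $\sqrt n\big(\bL_j(\theta)-\bL_j(\theta')-(L_N(\theta)-L_N(\theta'))\big)/\Delta_n$, $j=1,\dots,k$, lie in $[-1,1]$. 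First I would localize: Theorem \ref{th:consistency} gives $\wh\theta^{(i)}_{n,k}\to\theta_0$ in probability, and arguing as in its proof — the uniform convergence $\sup_{\theta,\theta'\in\Theta'}|\hL(\theta,\theta')-(L(\theta)-L(\theta'))|\to0$ over compact $\Theta'$, Assumption \ref{ass:4} for the unbounded part, and the fact (Assumption \ref{ass:2}) that $\theta'\mapsto L(\theta)-L(\theta')$ has a well-separated maximum at $\theta_0$ uniformly for $\theta$ near $\theta_0$ — with probability tending to $1$ the min--max of $\hL$ over $\Theta\times\Theta$ agrees with the min--max over $B(\theta_0,\delta)\times B(\theta_0,\delta)$ for any fixed small $\delta>0$.

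Next I would upgrade consistency to the rate $\big\|\wh\theta^{(i)}_{n,k}-\theta_0\big\|=O_P(N^{-1/2})$, $i=1,2$. This follows from a localized analysis of the deviations of $\hL(\theta,\theta')$ near $\theta_0$ together with the curvature estimate $c(\theta_0)\|\theta-\theta_0\|^2\le L(\theta)-L(\theta_0)$ of Assumption \ref{ass:2} — here it is crucial that the min--max procedure directly estimates the \emph{difference} $L(\theta)-L(\theta')$, whose variance is $O(\|\theta-\theta'\|^2)$ near $\theta_0$, so that a self-normalizing fixed-point argument produces the parametric rate — or, alternatively, from the excess-risk bounds for such estimators in \citep{lecue2018robust,minsker2019excess}. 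Fixing a large constant $C$ and putting $B_N:=B(\theta_0,CN^{-1/2})$, so that $\pr{\wh\theta^{(i)}_{n,k}\in B_N}\to1$ and $\pr{\wt\theta_N\in B_N}\to1$, I would then verify the quadratic regime on $B_N$: for $\theta,\theta'\in B_N$, Assumption \ref{ass:3} with $\tau=0$ gives $|\ell(\theta,X_i)-\ell(\theta',X_i)|\le M_{\theta_0}(X_i)\|\theta-\theta'\|$, hence for each block
\[
\Big|\sqrt n\big(\bL_j(\theta)-\bL_j(\theta')\big)\Big|\le\frac{\|\theta-\theta'\|}{\sqrt n}\sum_{i\in G_j}M_{\theta_0}(X_i)\le\frac{2C}{\sqrt k}\cdot\frac1n\sum_{i\in G_j}M_{\theta_0}(X_i),
\]
and since $\mb E M_{\theta_0}^2(X)<\infty$, a union bound over the $k$ blocks with Chebyshev's inequality gives $\max_{j\le k}\frac1n\sum_{i\in G_j}M_{\theta_0}(X_i)=\mb E M_{\theta_0}(X)+O_P(\sqrt{k/n})$, so the displayed quantity — and thus its centred counterpart — is $O_P(k^{-1/2}+n^{-1/2})$ \emph{uniformly} over $\theta,\theta'\in B_N$ and over $j$. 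Since $\Delta_n$ is bounded away from $0$ (as $\Delta_\infty\in(0,\infty]$), all $k$ block residuals lie in $[-1,1]$ with probability tending to $1$, and therefore $\hL(\theta,\theta')=L_N(\theta)-L_N(\theta')$ for all $\theta,\theta'\in B_N$ simultaneously.

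On the intersection of the events above, $\big(\wh\theta^{(1)}_{n,k},\wh\theta^{(2)}_{n,k}\big)=\argmin_{\theta\in B_N}\max_{\theta'\in B_N}\big(L_N(\theta)-L_N(\theta')\big)=(\wt\theta_N,\wt\theta_N)$, because $\max_{\theta'\in B_N}$ merely removes a term independent of $\theta$, and $\wt\theta_N\in B_N$ minimizes $L_N$ over $B_N$ as well. It then remains to invoke the classical asymptotic normality theorem for M-estimators (Theorem 5.23 in \citep{van2000asymptotic}) applied with $m_\theta=-\ell(\theta,\cdot)$, whose hypotheses are precisely Assumptions \ref{ass:2} and \ref{ass:3} with $\tau=0$: the Lipschitz bound yields, via a maximal inequality, the stochastic equicontinuity of $\theta\mapsto\sqrt N(P_N-P)\pd_\theta\ell(\theta,\cdot)$ at $\theta_0$, while $\pd_\theta L(\theta_0)=0$ and $\mb E\big[\pd_\theta\ell(\theta_0,X)\pd_\theta\ell(\theta_0,X)^T\big]=\Sigma$ follow by differentiating under the integral (again justified by Assumption \ref{ass:3}). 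This gives $\sqrt N(\wt\theta_N-\theta_0)=-\big[\pd^2_\theta L(\theta_0)\big]^{-1}\frac{1}{\sqrt N}\sum_{i=1}^N\pd_\theta\ell(\theta_0,X_i)+o_P(1)\xrightarrow{d}N\big(0,D^2(\theta_0)\big)$, and hence the same limit for $\sqrt N\big(\wh\theta^{(i)}_{n,k}-\theta_0\big)$, $i=1,2$.

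I expect the main obstacle to be the combination of the preliminary rate and the quadratic-regime step: one must exclude \emph{every one} of the $k\to\infty$ blocks from producing an out-of-range residual while only two moments of the envelope $M_{\theta_0}$ are assumed, and one must confirm that localization actually reaches the $N^{-1/2}$ scale for all admissible pairs $(n,k)$ with $nk=N$. The mechanism making it work is the identity $N=nk$, which lets the shrinkage of $\|\theta-\theta'\|$ to the scale $N^{-1/2}$ cancel the $\sqrt n$ normalization built into $\hL$ once it is spread over the $k$ blocks; this is also why the choice $\rho(x)=|x|$ is excluded and why the hypothesis $\kappa=0$ cannot be dropped, since up to $\m O$ adversarial blocks could otherwise escape the quadratic regime and bias the minimizer.
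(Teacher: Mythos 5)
Your central observation --- that $\rho'$ is the identity on $[-1,1]$, so that once all $k$ block residuals $\sqrt n\bigl(\bL_j(\theta)-\bL_j(\theta')-(L_N(\theta)-L_N(\theta'))\bigr)/\Delta_n$ fall in $[-1,1]$ the estimator $\hL(\theta,\theta')$ coincides \emph{exactly} with $L_N(\theta)-L_N(\theta')$ --- is a genuine and attractive shortcut, and is a different route from the paper. The paper never makes this reduction: for the asymptotic-normality step it instead works with the linearization of Proposition~\ref{lemma:b-k}, establishes finite-dimensional convergence of the process $M_N(h,q)$ via Lindeberg--Feller, and proves asymptotic equicontinuity by Gaussian symmetrization, Slepian comparison and a multiplier inequality. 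Your version, if completed, would replace all of that machinery by an appeal to Theorem~5.23 of \cite{van2000asymptotic}, which is a real gain.

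The gap is upstream: the preliminary rate $\|\wh\theta_{n,k}^{(i)}-\theta_0\|=O_P(N^{-1/2})$ is asserted but not proved, and the quadratic-regime identification cannot be used to obtain it. Your bound $\sqrt n\,\|\theta-\theta'\|\max_j n^{-1}\sum_{i\in G_j}M_{\theta_0}(X_i)=O_P(k^{-1/2}+n^{-1/2})$ requires $\|\theta-\theta'\|\lesssim N^{-1/2}$; on a shell of radius $2^j/\sqrt N$ approaching a constant $\eta$, the same quantity is of order $\sqrt n\,\eta$, so the block residuals escape $[-1,1]$ and $\hL$ is \emph{not} a linear functional of the data there. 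The peeling argument over dyadic shells $S_{N,j}$ that the paper carries out (via the representation \eqref{eq:b-k-2}, Lemma~\ref{lemma:lindeberg} and Lemma~\ref{lemma:sup-power}) is precisely what controls those intermediate scales, and a reference to excess-risk bounds "for such estimators" does not supply it under the present assumptions; this is the bulk of the work in the paper's proof and cannot be waved away. A secondary point: even granting $\wh\theta^{(1)},\wh\theta^{(2)},\wt\theta_N\in B_N$, the identity $(\wh\theta^{(1)},\wh\theta^{(2)})=\argmin_{\theta\in B_N}\max_{\theta'\in B_N}(L_N(\theta)-L_N(\theta'))$ is not automatic, because $\wh\theta^{(1)}$ minimizes $\theta\mapsto\sup_{\theta'\in\Theta}\hL(\theta,\theta')$, and you would need to show that for $\theta\in B_N$ the inner supremum is attained in $B_N$ (equivalently, that $\wh\theta(\theta)\in B_N$ uniformly for $\theta\in B_N$); this requires another localization step of the same peeling type before the reduction to $L_N$ closes.
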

In essence, this result establishes that no loss of asymptotic efficiency occurs if the standard M-estimator based on empirical risk minimization is replaced by its robust counterpart $\wh\theta_N$.
For example, maximum likelihood estimator corresponds to the case when $\l\{ p_\theta, \ \theta\in \Theta\r\}$ is a family of probability density functions with respect to some $\sigma$-finite measure $\mu$ and $\ell(\theta,\cdot) = -\log p_\theta(\cdot)$. 
If it holds that $-\pd^2_\theta \,\mb E \log p_{\theta_0}(X) = I(\theta_0):=\mb E \l[ \pd_\theta \log p_{\theta_0}(X) \pd_\theta \log p_{\theta_0}(X)^T \r]$, then it follows that $\wh\theta_{N_j}$ is asymptotically equivalent to the maximum likelihood estimator. 

\subsection{Computational aspects.} 
\label{sec:computational}

Here, we briefly discuss some of the more practical aspects of the proposed estimators, including 
the choice of the scaling factors $\Delta_n$. Note that, while $\wh L(\theta,\theta')$ itself is defined as a minimizer of a convex function, it is not a convex-concave function itself, and the problem \eqref{eq:M-est-erm2} is not guaranteed to be convex-concave or have a unique solution. 
However, the gradient of $\wh L(\theta,\theta')$, both with respect to $\theta$ and $\theta'$, is easily computable: as $\sum_{j=1}^k \rho'\l(\sqrt{n}\,\frac{\bL_j (\theta) - \bL_j(\theta') - \wh L(\theta,\theta')}{\Delta_n}\r) = 0$, differentiating this expression yields that 
\[
\partial_\theta \wh L(\theta,\theta') =\frac{ \sum_{j=1}^k \partial_\theta \bL_j(\theta) \rho''\l(\sqrt{n}\,\frac{\bL_j (\theta) - \bL_j(\theta') - \wh L(\theta,\theta')}{\Delta_n}\r) }{ \sum_{j=1}^k \rho''\l(\sqrt{n}\,\frac{\bL_j (\theta) - \bL_j(\theta') - \wh L(\theta,\theta')}{\Delta_n}\r)}.
\] 
Due to this fact, gradient descent-ascent type methods for solving the problems closely related to \eqref{eq:M-est-erm2} have been proposed and have shown good performance in extended simulation studies; we refer the reader to \citep{lecue2017robust,mathieu2021excess} for the details. 

The problem of choosing the scaling factor for robust estimators of location has been studied since the seminal work of \cite{huber1964robust}. Here, we suggest setting $\Delta_n$ in a data-dependent way using the ``median absolute deviation'' (MAD) estimator; this idea has been suggested and numerically tested in \citep{mathieu2021excess}. We start with $\Delta_n:=\Delta_{n,0}$ being a fixed number (e.g., $\Delta_{n,0} = 1)$.  
Given an approximate solution $(\theta_t,\theta'_t)$, e.g., obtained via the gradient descent-ascent iteration, set $\widehat M(\theta_t,\theta'_t):=\mathrm{median}\l(\bL_1(\theta_t,\theta'_t),\ldots,\bL_k(\theta_t,\theta'_t) \r)$, and
\[
\mathrm{MAD}(\theta_t,\theta'_t) = \mathrm{median}\l( \l| \bL_1(\theta_t,\theta'_t) -\widehat M(\theta_t,\theta'_t)\r|, \ldots,\l| \bL_k(\theta_t,\theta'_t) - \widehat M(\theta_t,\theta'_t)\r| \r).
\] 
Finally, define $\wh \Delta_{n,t+1} := \frac{\mathrm{MAD}(\theta_t,\theta'_t)}{\Phi^{-1}(3/4)}$, where $\Phi$ is the distribution function of the standard normal law and the normalizing factor comes from the fact that for a sample from the normal distribution $N(\mu,\sigma^2)$, the expected value of $\mathrm{MAD}$ equals $\Phi^{-1}(3/4)\sigma$. The scaling factor can be updated again after a fixed number of iterations. Our theoretical results do not allow for a data-dependent choice of $\Delta_n$ however, and it would be an interesting avenue for further investigation.

\noindent We include a simple proof-of-concept numerical simulation in appendix \ref{sec:logistic}.

	
\section{Proofs.}

The proof of Theorem \ref{th:normality-B} uses characterization of $\wh\theta_{n,k}$ as the solution of the min-max problem, and follows a standard pattern of consequently establishing consistency, rate of convergence and finally the asymptotic normality. The arguments are quite general and can be extended beyond the classes that satisfy Lipschitz property imposed by assumption \ref{ass:3}. Since $\wh L(\theta_1,\theta_2)$ is defined implicitly as a solution of the convex minimization problem, we rely on the Bahadur-type linear representation of $\wh L(\theta_1,\theta_2)-L(\theta_1,\theta_2)$ with uniform control of the remainder terms.


\subsection{Technical tools.}

We recall some of the basic facts and existing results that our proofs often rely upon. Given a metric space $(T,\rho)$, the covering number $N(T,\rho,\eps)$ is defined as the smallest $N\in \mb N$ such that there exists a subset 
$F\subseteq T$ of cardinality $N$ with the property that for all $z\in T$, $\rho(z,F)\leq \eps$. 

Let $\l\{ Y(t), \ t\in T\r\}$ be a stochastic process indexed by $T$. 
We will say that it has sub-Gaussian increments with respect to some metric $\rho$ if for all $t_1,t_2\in \mb T$ and $s\in \mb R$,
\[
\mb E e^{s(Y_{t_1} - Y_{t_2})} \leq e^{\frac{s^2 \rho^2(t_1,t_2)}{2}}.
\]
\begin{fact}[Dudley's entropy bound]
\label{fact:DUD}
Let $\{Y(t), \ t\in T \}$ be a centered stochastic process with sub-Gaussian increments. Then the following inequality holds:
\be
\mb E \sup_{t\in T} |Y(t) - Y(t_0)|\leq 12\int\limits_{0}^{D(T)} \sqrt{\log N(T,\rho,\eps)}d\eps,
\ee
where $D(T)$ is the diameter of the space $T$ with respect to $\rho$.
\end{fact}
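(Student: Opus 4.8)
The plan is to establish the inequality by Dudley's classical \emph{dyadic chaining} argument. If the right-hand side is infinite there is nothing to prove, so assume $\int_0^{D(T)}\sqrt{\log N(T,\rho,\eps)}\,d\eps<\infty$; then $N(T,\rho,\eps)<\infty$ for every $\eps>0$, so $T$ is totally bounded, $D:=D(T)<\infty$, and $(T,\rho)$ is separable. Passing to the separable modification of $Y$ makes $\sup_{t\in T}|Y(t)-Y(t_0)|$ measurable and equal to the supremum over a fixed countable dense set, and by monotone convergence it suffices to prove the bound for $T$ finite (with $|T|\ge 2$; otherwise it is trivial), which I assume henceforth.

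\medskip
\noindent\emph{Construction of the chain.} Set $\eps_k:=2^{-k}D$ for $k\ge 0$. Let $T_0:=\{t_0\}$ (an $\eps_0$-net, since $T$ has diameter $D$) and, for $k\ge 1$, let $T_k\subseteq T$ be a minimal $\eps_k$-net, so $|T_k|=N_k:=N(T,\rho,\eps_k)$; pick $K$ so large that $\eps_K$ is below the smallest pairwise distance in $T$, whence $T_K=T$. Let $p_k:T\to T_k$ send each point to a nearest element of $T_k$ (so $\rho(t,p_k(t))\le\eps_k$), and define $q_k:T\to T_k$ by $q_K:=\mathrm{id}$ and $q_k:=p_k\circ q_{k+1}$ for $k<K$. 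Then $q_0\equiv t_0$, $\rho\bigl(q_{k+1}(t),q_k(t)\bigr)\le\eps_k$, and --- the point that makes an \emph{integral} appear rather than a logarithm of a product --- the increment $Y(q_{k+1}(t))-Y(q_k(t))$ depends on $t$ only through $q_{k+1}(t)\in T_{k+1}$, so it takes at most $N_{k+1}$ distinct values. Telescoping and the triangle inequality yield
\[
\mb E\sup_{t\in T}\bigl|Y(t)-Y(t_0)\bigr|\ \le\ \sum_{k\ge 0}\mb E\max_{u\in T_{k+1}}\bigl|Y(u)-Y(p_k(u))\bigr|.
\]

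\medskip
\noindent\emph{Level-by-level bound and summation.} Each variable $Y(u)-Y(p_k(u))$ is centered and sub-Gaussian with variance proxy $\le\eps_k^2$, and there are $\le N_{k+1}$ of them; the elementary maximal inequality $\mb E\max_{i\le m}|W_i|\le\sigma\sqrt{2\log(2m)}$ bounds the $k$-th term by $\eps_k\sqrt{2\log(2N_{k+1})}\le 2\eps_k\sqrt{\log N_{k+1}}$, where the last step uses $N_{k+1}\ge 2$, valid whenever $\eps_{k+1}<D/2$ since no ball of radius $<D/2$ can cover a set of diameter $D$. Because $\eps\mapsto\sqrt{\log N(T,\rho,\eps)}$ is nonincreasing, $\eps_k\sqrt{\log N_{k+1}}\le 4\int_{\eps_{k+2}}^{\eps_{k+1}}\sqrt{\log N(T,\rho,\eps)}\,d\eps$, and summing over the disjoint intervals gives $\sum_{k\ge 0}\eps_k\sqrt{\log N_{k+1}}\le 4\int_0^{D}\sqrt{\log N(T,\rho,\eps)}\,d\eps$, hence the whole sum is $\le 8\int_0^{D}\sqrt{\log N(T,\rho,\eps)}\,d\eps$. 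The only exception is the term $k=0$ in the case $N_1=N(T,\rho,D/2)=1$ (the estimate above needs $N_1\ge 2$ there); then $T_1=\{c\}$ is a singleton with $\rho(c,t_0)\le\eps_1=D/2$, so that term equals $\mb E|Y(c)-Y(t_0)|\le D/2$, which is itself $\le(\log 2)^{-1/2}\int_0^{D}\sqrt{\log N(T,\rho,\eps)}\,d\eps$ since $N(T,\rho,\cdot)\ge 2$ on $(0,D/2)$. In all cases $\mb E\sup_{t\in T}|Y(t)-Y(t_0)|\le 12\int_0^{D}\sqrt{\log N(T,\rho,\eps)}\,d\eps$, and letting the finite subsets increase to a countable dense set removes the finiteness assumption.

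\medskip
\noindent\emph{Where the difficulty is.} The argument is essentially textbook, so no step is genuinely hard. The two places that require care are (i) the measurability of the supremum and the reduction from general to finite $T$, handled by passing to the separable modification --- legitimate precisely because finiteness of the integral already forces total boundedness; and (ii) the accounting of the numerical constant, in particular at the coarsest scale, where $N(T,\rho,\eps)$ may equal $1$ and must be treated by hand as above. The dyadic scheme produces an absolute constant of size $8$ on the generic scales; the value $12$ in the statement is the (non-sharp) constant recorded in standard treatments of empirical processes, e.g.\ \citep{wellner1}, and any fixed universal constant would serve equally well in what follows.
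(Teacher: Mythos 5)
Your proof is correct. The paper does not actually prove this statement --- it records it as a Fact and refers to Talagrand's book --- so there is nothing to compare against except the standard literature, and your dyadic chaining argument is exactly that standard proof: nets at scales $2^{-k}D$, a telescoping chain whose $k$-th level has at most $N_{k+1}$ distinct sub-Gaussian increments of scale $\eps_k$, the maximal inequality for finitely many sub-Gaussian variables, and comparison of the resulting series with the entropy integral using monotonicity of $\eps\mapsto N(T,\rho,\eps)$. The two delicate points are both handled properly: the reduction to finite $T$ is legitimate because finiteness of the integral forces total boundedness, and the coarsest scale where $N(T,\rho,D/2)$ may equal $1$ is treated separately (your bound $\mb E|Y(c)-Y(t_0)|\le D/2$ is justified since the moment generating function condition $\mb E e^{sW}\le e^{s^2\sigma^2/2}$ gives $\mb E W^2\le\sigma^2$ by expanding at $s=0$, hence $\mb E|W|\le\sigma$). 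The constant accounting is not sharp but comfortably lands under $12$, which is all that is claimed.
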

\begin{proof}
See the book by \cite{talagrand2005generic}.
\end{proof}
		
\begin{lemma}
\label{lemma:sum}
Let $\{A_n(\theta), \ \theta\in \Theta\},\{B_n(\theta), \ \theta\in \Theta\subseteq \mb R^d\}$ be sequences of stochastic processes such that for every $\theta\in \Theta$, the sequences of random variables $\{A_n(\theta)\}_{n\geq 1}$ and $\{B_n(\theta)\}_{n\geq 1}$ are stochastically bounded, and for any $\eps>0$,
\[
\limsup_{n\to\infty} \pr{\sup_{\|\theta - \theta_0\|\leq \delta} \l| A_n(\theta) - A_n(\theta_0)\r|\geq \eps} \to 0 \text{ as } \delta\to 0,
\]
\[
\limsup_{n\to\infty} \pr{\sup_{\|\theta - \theta_0\|\leq \delta} \l| B_n(\theta) - B_n(\theta_0)\r|\geq \eps} \to 0 \text{ as } \delta\to 0.
\]
Then 
\[
\limsup_{n\to\infty} \pr{\sup_{\|\theta - \theta_0\| \leq \delta} \l| A_n(\theta)B_n(\theta) - A_n(\theta_0)B_n(\theta_0)\r|\geq \eps} \to 0 \text{ as } \delta\to 0.
\]
Moreover, if there exists $c>0$ such that 
\[
\liminf_{n\to\infty} \pr{|B_n(\theta_0)|\geq c} = 1, 
\]
then the following also holds:
\[
\limsup_{n\to\infty} \pr{\sup_{\|\theta - \theta_0\| \leq \delta} \l| \frac{A_n(\theta)}{B_n(\theta)} - \frac{A_n(\theta_0)}{B_n(\theta_0)}\r|\geq \eps} \to 0 \text{ as } \delta\to 0.
\]
\end{lemma}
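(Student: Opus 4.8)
My plan is to run a standard $\eps$--$\delta$ bookkeeping argument based on the elementary decompositions
\[
A_n(\theta)B_n(\theta) - A_n(\theta_0)B_n(\theta_0) = \bigl(A_n(\theta) - A_n(\theta_0)\bigr)B_n(\theta) + A_n(\theta_0)\bigl(B_n(\theta) - B_n(\theta_0)\bigr)
\]
for the product claim, and
\[
\frac{A_n(\theta)}{B_n(\theta)} - \frac{A_n(\theta_0)}{B_n(\theta_0)} = \frac{\bigl(A_n(\theta) - A_n(\theta_0)\bigr)B_n(\theta_0) - A_n(\theta_0)\bigl(B_n(\theta) - B_n(\theta_0)\bigr)}{B_n(\theta)\,B_n(\theta_0)}
\]
for the quotient claim. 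Fix $\eps>0$ and $\eta>0$; the goal is to exhibit $\delta_0>0$ such that $\delta\le\delta_0$ forces $\limsup_n$ of the relevant probability below $\eta$.

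For the product, I would first use stochastic boundedness of $\{A_n(\theta_0)\}$ and $\{B_n(\theta_0)\}$ to pick $M<\infty$ with $\limsup_n\pr{|A_n(\theta_0)|>M}<\eta/8$ and $\limsup_n\pr{|B_n(\theta_0)|>M}<\eta/8$. Then I would invoke the two equicontinuity hypotheses with thresholds chosen small enough to absorb the factor $M+1$: pick $\delta_0>0$ so that for $\delta\le\delta_0$ both $\limsup_n\pr{\sup_{\|\theta-\theta_0\|\le\delta}|A_n(\theta)-A_n(\theta_0)|\ge\min(1,\tfrac{\eps}{4(M+1)})}<\eta/8$ and $\limsup_n\pr{\sup_{\|\theta-\theta_0\|\le\delta}|B_n(\theta)-B_n(\theta_0)|\ge\min(1,\tfrac{\eps}{4M})}<\eta/8$, which is legitimate since the suprema are monotone in $\delta$. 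On the intersection $E_{n,\delta}$ of the four complementary events, the triangle inequality gives $|B_n(\theta)|\le M+1$ uniformly over the ball, and the product decomposition then yields $\sup_{\|\theta-\theta_0\|\le\delta}|A_n(\theta)B_n(\theta)-A_n(\theta_0)B_n(\theta_0)|\le\tfrac{\eps}{4(M+1)}(M+1)+M\cdot\tfrac{\eps}{4M}=\tfrac{\eps}{2}<\eps$. A union bound over $E_{n,\delta}^c$ followed by $\limsup_n$ gives a bound of $4\cdot\eta/8<\eta$, which is the first assertion.

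For the quotient, I would additionally exploit $\liminf_n\pr{|B_n(\theta_0)|\ge c}=1$: shrinking $\delta_0$ further so that the $B_n$-oscillation threshold is also at most $c/2$ and, moreover, scaled down by a factor of order $c^2$ relative to the product case, one gets on the good event $|B_n(\theta)|\ge|B_n(\theta_0)|-|B_n(\theta)-B_n(\theta_0)|\ge c/2$ for every $\theta$ in the ball, hence $|B_n(\theta)B_n(\theta_0)|\ge c^2/2$, while the numerator is bounded by $M$ times the sum of the two oscillations and therefore by $\tfrac{\eps c^2}{2}$. The quotient decomposition then gives $\sup_{\|\theta-\theta_0\|\le\delta}\bigl|\tfrac{A_n(\theta)}{B_n(\theta)}-\tfrac{A_n(\theta_0)}{B_n(\theta_0)}\bigr|\le\eps$ on $E_{n,\delta}\cap\{|B_n(\theta_0)|\ge c\}$, and since $\pr{|B_n(\theta_0)|<c}\to0$ the same union bound closes the argument. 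There is no genuinely hard step here; the only point demanding care—and thus the closest thing to an obstacle—is the order of quantifiers: one must fix $\eta$ and $\eps$ first, then $M$, and only then $\delta_0$, and one must use monotonicity of $\sup_{\|\theta-\theta_0\|\le\delta}|\cdot|$ in $\delta$ together with the fact that the hypotheses are phrased as $\lim_{\delta\to0}\limsup_n$ rather than as a statement uniform in $n$.
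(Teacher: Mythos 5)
Your proposal is correct, and it is exactly the elementary triangle-inequality argument the paper has in mind (the paper explicitly omits the details, stating only that the result ``follows in a straightforward manner from the triangle inequality''). Your write-up supplies the standard decompositions and the correct order of quantifiers, so it fills in precisely what was left implicit.
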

\begin{proof}
The result follows in a straightforward manner from the triangle inequality hence the details are omitted. 
\end{proof}	
	
The following bound that we will frequently rely upon allows one to control the error $\l| \hL(\theta) - L(\theta)\r|$ uniformly over compact subsets $\Theta'\subseteq \Theta$. 
Recall the adversarial contamination framework introduced in section \ref{sec:intro}, and define 
\[
\widetilde \Delta:=\max\l( \Delta_n, \sup_{\theta\in \Theta'}\sigma(\theta,\theta_0)\r).
\] 
\begin{lemma}
\label{lemma:unif}
Let $\m L = \{ \ell(\theta,\cdot), \ \theta \in \Theta \}$ be a class of functions mapping $S$ to $\mb R$, and assume that 
$\sup_{\theta\in \Theta'} \mb E\l| \ell(\theta,X) - \ell(\theta_0,X) - L(\theta,\theta_0)\r|^{2+\tau}<\infty$ for some $\tau\in [0,1]$. Then there exist absolute constants $c,\, C>0$ and a function $g_{\tau}(x,\theta)$ satisfying $g_{\tau}(x,\theta) \buildrel{x\to\infty}\over{=}\begin{cases} o(1), & \tau=0, 
\\
O(1), & \tau>0
\end{cases}$ such that for all $s>0,$ $n$ and $k$ satisfying
\begin{multline*}
\frac{s}{\sqrt{k}\Delta_n}\,\mb E\sup_{\theta\in \Theta'} \frac{1}{\sqrt{N}}\l| \sum_{j=1}^N \l(\ell(\theta,X_j) - \ell(\theta_0,X_j) - L(\theta,\theta_0)\r) \r| 
\\
+ \sup_{\theta \in \Theta'} \l[g_{\tau}(n,\theta) \frac{\mb E \l| \ell(\theta,X) - \ell(\theta_0,X) - L(\theta,\theta_0)\r|^{2+\tau}}{\Delta_n^{2+\tau}n^{\tau/2}} \r]
+ \frac{\m O}{k} \leq c,
\end{multline*}
the following inequality holds with probability at least $1 - \frac{1}{s}$:
\ml{
\sup_{\theta\in \Theta'}\l| 
\hL(\theta,\theta_0) - L(\theta,\theta_0) \r| \leq 
C 
\Bigg[ s\cdot\frac{\widetilde \Delta}{\Delta_n} \mb E\sup_{\theta\in \Theta'} \l|\frac{1}{N} \sum_{j=1}^N \Big( \ell(\theta,X_j)-\ell(\theta_0,X_j) - L(\theta,\theta_0) \Big) \r| 
\\
+ \widetilde \Delta \l( \frac{1}{\sqrt{n}}\frac{\m O}{k}
+ \frac{1}{\sqrt n}\sup_{\theta \in \Theta'}\l[ g_{\tau}(n,\theta) \frac{\mb E \l| \ell(\theta,X) - \ell(\theta_0,X) - L(\theta,\theta_0)\r|^{2+\tau}}{\Delta_n^{2+\tau}n^{\tau/2}}\r] \r) \Bigg].
}
\end{lemma}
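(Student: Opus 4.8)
The plan is to derive the uniform bound from a pointwise deviation result (of the type given by Theorem 3 in \citep{minsker2017distributed}) combined with a chaining argument over $\Theta'$, exploiting the fact that $\hL(\theta)$ is a smooth $M$-functional of the block means $\bL_1(\theta),\dots,\bL_k(\theta)$. The key observation is that $\hL(\theta)$ is defined as the zero of $z\mapsto \sum_{j=1}^k \rho'\bigl(\sqrt n\,(\bL_j(\theta)-z)/\Delta_n\bigr)$; since $\rho'$ is bounded (by assumption \ref{ass:1}) and monotone, standard $M$-estimation arguments show that whenever $z$ is such that the (rescaled) empirical average $\frac1k\sum_j \rho'\bigl(\sqrt n\,(\bL_j(\theta)-z)/\Delta_n\bigr)$ is close to its expectation, $\hL(\theta)$ is close to the population analogue, which in turn is close to $L(\theta)$ up to a bias term of order $\sigma^2(\theta)/(\Delta_n^2 \sqrt n)$ coming from the second-order Taylor expansion of $\rho'$ around $0$. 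This is where the function $g_{\tau,P}$ and the $\Delta_n^{-(2+\tau)}n^{-\tau/2}$ factor enter: controlling the bias requires a $(2+\tau)$-th moment bound on $\ell(\theta,X)-L(\theta)$, and under $\tau=0$ one only gets $o(1)$ decay (uniform integrability), while $\tau>0$ gives a genuine polynomial rate.

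First I would fix $\theta\in\Theta'$ and establish the pointwise bound: write the defining equation, use convexity of $\rho$ to reduce $\{|\hL(\theta)-L(\theta)|>t\}$ to an event on the monotone function $\frac1k\sum_j \rho'\bigl(\sqrt n(\bL_j(\theta)-L(\theta)-t)/\Delta_n\bigr)$ having a sign, then split that sum into its expectation (the bias), a sum of i.i.d. bounded increments handled by Bernstein/Hoeffding, and the adversarial contribution, which is at most $\m O/k$ in absolute value since each $\rho'$ term is bounded. The $\m O/k\le c$ and the moment/chaining smallness condition in the hypothesis are exactly what guarantee the population equation has its root in the regime where $\rho'$ behaves linearly, so the implicit-function-type inversion is valid. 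Second, I would upgrade to a uniform-in-$\theta$ statement: the map $\theta\mapsto \frac1k\sum_j \rho'\bigl(\sqrt n(\bL_j(\theta)-z)/\Delta_n\bigr)$ has increments controlled, via the Lipschitz property of $\rho'$ and the mean-value theorem, by $\frac{\sqrt n}{\Delta_n}\cdot\frac1k\sum_j |\bL_j(\theta)-\bL_j(\theta')|\le \frac{\sqrt n}{\Delta_n}(P_N+\text{blockwise})|\ell(\theta,\cdot)-\ell(\theta',\cdot)|$. Taking a supremum and using the expected-supremum quantity $\mb E\sup_{\theta\in\Theta'}\frac1N\sum_j|\ell(\theta,X_j)-L(\theta)|$ that appears in the statement lets me convert the pointwise bound into the uniform one with the stated multiplicative loss; the factor $\widetilde\Delta/\Delta_n = \max(1,\sigma(\Theta')/\Delta_n)$ accounts for the two regimes $\Delta_n\gtrless\sigma(\Theta')$ where either the Gaussian-type fluctuation or the scale $\sigma(\Theta')$ dominates the width of the ``linear zone'' of $\rho'$.

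The main obstacle I anticipate is handling the interplay between the bias term and the uniformity simultaneously: the bias $\asymp g_{\tau,P}(n)\,\mb E|\ell(\theta,X)-L(\theta)|^{2+\tau}/(\Delta_n^{2+\tau}n^{\tau/2})$ is itself $\theta$-dependent and only controlled in expectation-of-supremum if we impose the $(2+\tau)$-moment bound \emph{uniformly} over $\Theta'$ (which is hypothesized), and making the $g_{\tau,P}$ with the stated $o(1)$-vs-$O(1)$ dichotomy come out of a truncation argument requires care — one truncates $\ell(\theta,X)-L(\theta)$ at level $\asymp\Delta_n\sqrt n$, bounds the contribution of the tail by a $(2+\tau)$-th moment Markov estimate, and on the truncated part Taylor-expands $\rho'$ to second order with Lipschitz remainder control from $\rho^{(5)}$ bounded (assumption \ref{ass:1}(iii), though really only $\rho'''$ bounded and Lipschitz is needed here). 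A secondary technical point is that the ``absolute constants'' claim forces the chaining/concentration constants to be genuinely dimension- and distribution-free, so all $\theta$-dependence must be packaged into the explicit quantities $\mb E\sup_{\theta\in\Theta'}\frac1N\sum_j|\ell(\theta,X_j)-L(\theta)|$ and $\sup_{\theta\in\Theta'}\mb E|\ell(\theta,X)-L(\theta)|^{2+\tau}$ appearing in the bound, rather than hidden in constants — this is bookkeeping but needs to be done carefully throughout. Everything else (the reduction via convexity, the Bernstein bound on the bounded i.i.d.\ sum, the $\m O/k$ adversarial term) is routine once this structure is in place.
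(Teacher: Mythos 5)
Your overall structure matches the one the paper delegates to (Theorem~3.1 of Minsker, 2018): reduce $\{|\hL(\theta)-L(\theta)|>t\}$ to a sign condition on the monotone score $z\mapsto\frac1k\sum_j\rho'\bigl(\sqrt n(\bL_j(\theta)-z)/\Delta_n\bigr)$, then split that score into a centered fluctuation, a bias of $(2+\tau)$-moment order, and an adversarial piece bounded by $\m O/k$ because $\rho'$ is bounded. You are also right that the Lindeberg-type truncation argument for the bias only needs $\rho'''$ Lipschitz, not all of assumption~1(iii). Where you diverge is the uniformity step, and this is where your plan would need real repair. You propose to first prove a pointwise deviation inequality (via Bernstein/Hoeffding) and then upgrade to the supremum by chaining. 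That is a genuinely different route from what the paper does, and it is the weaker of the two here for a concrete reason: the stated conclusion is parameterized by $\mb E\sup_{\theta\in\Theta'}\bigl|\frac1N\sum_j(\ell(\theta,X_j)-L(\theta))\bigr|$, and a chaining argument produces a Dudley-type entropy integral, not that quantity. Converting one to the other in general needs additional hypotheses and introduces constants that are no longer absolute in the sense claimed. The paper instead bounds the \emph{uniform} score deviation $T_{n,k}(\Theta')=\sup_\theta\bigl(\frac1k\sum_j\rho'(\cdot)-\mb E\rho'(\cdot)\bigr)$ in one shot: apply Markov's inequality (not Bernstein — the lemma only claims probability $1-1/s$, so Markov suffices and is cleaner) to reduce to $\mb E\sup_\theta|T_{n,k}|$, and then use symmetrization plus the contraction principle (exploiting that $x\mapsto\rho'(\sqrt n x/\Delta_n)$ is $\sqrt n/\Delta_n$-Lipschitz) to pass directly to
\begin{equation*}
\mb E\sup_{\theta\in\Theta'}\Bigl|\frac1k\sum_{j=1}^k\bigl(\rho'(\cdot)-\mb E\rho'(\cdot)\bigr)\Bigr|\leq \frac{C(\rho)}{\Delta_n\sqrt k}\,\mb E\sup_{\theta\in\Theta'}\Bigl|\frac1{\sqrt N}\sum_{j=1}^N\bigl(\ell(\theta,X_j)-L(\theta)\bigr)\Bigr|,
\end{equation*}
which is exactly the quantity in the stated bound. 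No pointwise step, no entropy, and the constants stay absolute. A second, more local pitfall in your plan: if you chain $\theta\mapsto\hL(\theta)-L(\theta)$ directly rather than the score process, you are implicitly differentiating the $M$-functional, and the Lipschitz constant of that inversion is random (it is governed by $\frac1k\sum_j\rho''(\cdot)$); keeping that uniformly away from zero over $\Theta'$ requires the very smallness condition you are trying to prove, so the argument has to be arranged carefully to avoid circularity. The paper's order of operations — bound the score uniformly first, invert once deterministically afterwards — sidesteps this.
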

\noindent We will only use the bound of the lemma with $\tau=0$. The proof of this bound is similar to the argument behind Theorem 3.1 in \citep{minsker2018uniform}; for the readers' convenience, we present the details in section \ref{sec:proof-unif}. 
For the illustration purposes, assume that $\m O=0$, whence the result above implies that as long as 
$\mb E\sup_{\theta\in \Theta'} \frac{1}{\sqrt{N}}\sum_{j=1}^N \l| \ell(\theta,X_j)) -\ell(\theta_0,X_j) - L(\theta,\theta_0) \r| = O(1)$ 
and $\sigma(\Theta')\lesssim\Delta_n = O(1)$, 
\[
\sup_{\theta\in \Theta'}\l| \hL(\theta,\theta_0) - L(\theta,\theta_0) \r| = O_p\l(N^{-1/2} + n^{-(1+\tau)/2}\Delta_n^{-(2+\tau)}\r).
\] 
Moreover, if $\m O = \kappa N$ and $\Delta_n=O(1)$, then, setting $k \asymp N \kappa^{\frac{2}{2+\tau}}$, we see that  
\[
\sup_{\theta\in \Theta'}\l| \hL(\theta,\theta_0) - L(\theta,\theta_0) \r| = O_p\l(N^{-1/2} + \kappa^{\frac{1+\tau}{2+\tau}}\r).
\]

\subsection{Proof of Theorem \ref{th:consistency}.}

To simplify and clarify the notation, we will omit subscript $j$ in most cases and simply write ``$k,n$'' instead of ``$k_j,n_j$'' to denote the increasing sequences of the number of subgroups and their cardinalities. 
For every $\theta'\in \Theta$, define $\wh \theta(\theta'):=\argmax_{\theta\in \Theta} \wh L(\theta',\theta) = \argmin_{\theta\in\Theta}\wh L(\theta,\theta')$ (here, we assume that the maximum is attained so that $\wh \theta(\theta')$ is well defined; however, the argument also holds with $\wh \theta(\theta')$ replaced by a near-maximizer). 
We will set $\wh\theta_{n,k}^{(1)}:=\wh\theta_{n,k}$ and $\wh\theta_{n,k}^{(2)}:=\wh \theta(\wh\theta_{n,k}^{(1)})$. Observe that $\wh L\l(\wh\theta_{n,k}^{(1)},\wh\theta_{n,k}^{(2)}\r) \leq \wh L\l(\theta_0, \wh\theta(\theta_0)\r)$, hence 
whenever $\|\wh \theta_{n,k}^{(j)}-\theta_0\|\leq R, \ j=1,2$, 
\ml{
L(\wh\theta_{n,k}^{(1)}) - L(\wh\theta_{n,k}^{(2)}) = L(\wh\theta_{n,k}^{(1)}) - L(\wh\theta_{n,k}^{(2)}) \pm \wh L(\wh\theta_{n,k}^{(1)},\wh\theta_{n,k}^{(2)}) 
\\
\leq \wh L\l(\theta_0, \wh\theta(\theta_0)\r) +  \sup_{\|\theta_j - \theta_0\|\leq R, j=1,2} \l|\wh L(\theta_1,\theta_2) - L(\theta_1,\theta_2)\r| 
\\
\leq L(\theta_0) - L(\wh\theta(\theta_0)) 
+ 2\sup_{\|\theta_j - \theta_0\|\leq R, j=1,2} \l|\wh L(\theta_1,\theta_2) - L(\theta_1,\theta_2) \r| 
\\
\leq 2\sup_{\|\theta_j - \theta_0\|\leq R, j=1,2} \l|\wh L(\theta_1,\theta_2) - L(\theta_1,\theta_2)\r|, 
}
where we used the fact that $L(\theta_0) - L(\wh\theta(\theta_0)) \leq 0$ in the last step. 
On the other hand, for any $\eps>0$, 
\[
\inf_{\|\theta_1 - \theta_0\|\geq \eps} \sup_{\theta_2}\l( L(\theta_1) - L(\theta_2)\r) > L(\theta_0) + \delta - L(\theta_0) = \delta
\]
where $\delta:=\delta(\eps)>0$ exists in view of assumption \ref{ass:2}. Therefore, 
\ml{
\pr{\|\wh\theta^{(1)}_{n,k} - \theta_0\|\geq \eps} \leq \pr{\sup_{\|\theta_j - \theta_0\|\leq R, j=1,2} \l|\wh L(\theta_1,\theta_2) - L(\theta_1,\theta_2) \r| >\delta/2 }
\\ 
+ \pr{\l\| \wh\theta_{n,k}^{(1)} - \theta_0\r\|>R \text{ or } \l\| \wh\theta_{n,k}^{(2)}-\theta_0\r\|>R}.
}
It follows from Lemma \ref{lemma:unif} that
\[
\sup_{\|\theta_j - \theta_0\|\leq R, j=1,2} \l|\wh L(\theta_1,\theta_2) - L(\theta_1,\theta_2)\r|  \to 0 \text{ in probability}
\]
as long as $\limsup_{k,n\to\infty}\frac{\m O(k,n)}{k}\leq c$ as $n,k\to \infty$. 
Indeed, to verify this, it suffices to show that 
\[
\limsup_{N\to\infty} \mb E\sup_{\|\theta_j - \theta_0\|\leq R, j=1,2} \l|\frac{1}{\sqrt N} \sum_{j=1}^N \l( \ell(\theta_1,X_j) - \ell(\theta_2,X_j) - L(\theta_1,\theta_2) \r) \r| <\infty,
\]
which follows from the triangle inequality and the relation
\begin{equation}
\label{eq:exp-sup}
\limsup_{N\to\infty} \mb E\sup_{\|\theta_1 - \theta_0\|\leq R} \l|\frac{1}{\sqrt N} \sum_{j=1}^N \l( \ell(\theta_1,X_j) - \ell(\theta_0,X_j) - L(\theta_1,\theta_0) \r) \r| <\infty.
\end{equation}
To establish the latter, we use a well-known argument based on symmetrization inequality and Dudley's entropy integral bound (see Fact \ref{fact:DUD}). 
Let $\eps_1,\ldots,\eps_N$ be i.i.d. random signs, independent of the data $X_1,\ldots,X_N$. Then symmetrization inequality \citep{wellner1} yields that
\ml{
\mb E\sup_{\theta\in \Theta:\|\theta - \theta_0\|\leq R} \frac{1}{\sqrt N}\l| \sum_{j=1}^N \l( \ell(\theta,X_j) - \ell(\theta_0,X_j)- L(\theta,\theta_0) \r) \r|
\\
\leq 2\mb E\sup_{\theta\in \Theta:\|\theta - \theta_0\|\leq R} \frac{1}{\sqrt N}\l| \sum_{j=1}^N \eps_j \l(\ell(\theta,X_j) - \ell(\theta_0,X_j)\r) \r|.
}
Conditionally on $X_1,\ldots,X_N$, the process $\ell(\theta,\cdot)\mapsto \frac{1}{\sqrt N} \sum_{j=1}^N \eps_j \l(\ell(\theta,X_j) - \ell(\theta_0,X_j)\r)$ has sub-Gaussian increments with respect to the semi-metric $d_N^2(\theta_1,\theta_2):=\frac{1}{N}\sum_{j=1}^N \l( \ell(\theta_1,X_j) - \ell(\theta_2,X_j)\r)^2$. 
It follows from compactness of the set $B(\theta_0,R)=\{\theta: \, \|\theta - \theta_0\|\leq R\}$ and assumption \ref{ass:3} that there exist $\theta_1,\ldots,\theta_{N(R)}$ such that  $\bigcup_{j=1}^{N(R)} B(\theta_j, r(\theta_j)) \supseteq B(\theta_0,R)$ and 
\[
|\ell(\theta',x) - \ell(\theta'',x) |\leq \m V(x; r(\theta_j)) \|\theta' - \theta''\|
\] 
for all $\theta',\theta''\in B(\theta_j,r(\theta_j))$.
To cover $B(\theta_0,R)$ by the balls of $d_N$-radius $\tau$, it suffices to cover each of the $N(R)$ balls $B(\theta_j,r(\theta_j))$. 
It is easy to see that the latter requires at most $\l( \frac{6 r(\theta_j)\| \m V(\cdot; r(\theta_j)) \|_{L_2(P_N)}}{\tau } \r)^{d}$ balls of radius $\tau$. Therefore, 
\[
\log^{1/2} N(B(\theta_0,R),d_N,\tau)\leq  \log^{1/2}\l( \sum_{j=1}^{N(R)} \l[ \l( \frac{6 r(\theta_j)\| \m V(\cdot; r(\theta_j)) \|_{L_2(P_N)}}{\tau } \r)^{d} \vee 1\r]\r).
\] 
Note that for any $x_1,\ldots,x_m\geq 1$, $\sum_{j=1}^m x_j \leq m\prod_{j=1}^m x_j$, or 
$\log \l(\sum_{j=1}^m x_j \r) \leq \log m + \sum_{j=1}^m \log x_j$, so that 
\ml{
\log^{1/2}\l( \sum_{j=1}^{N(R)} \l[\l( \frac{6 r(\theta_j)\| \m V(\cdot; r(\theta_j)) \|_{L_2(P_N)}}{\tau } \r)^{d} \vee 1\r]\r)
\\
\leq \log^{1/2} N(R) + \sum_{j=1}^{N(R)} \sqrt{d}\log_+^{1/2}\l( \frac{6 r(\theta_j)\| \m V(\cdot; r(\theta_j))\|_{L_2(P_N)}}{\tau }\r),
}
where $\log_+(x) := \max(\log x,0)$.
Moreover, the diameter $D_N$ of the set $B(\theta_0,R)$ is at most $2\sum_{j=1}^{N(R)} r(\theta_j) \|  \m V(\cdot; r(\theta_j)) \|_{L_2(P_N)}$. Therefore, 
\ml{
\int_0^{D_N} \log^{1/2} N(B(\theta_0,R),d_N,\tau)d\tau 
\\
\leq C\l( D_N \log^{1/2} N(R) + \sqrt{d}\sum_{j=1}^{N(R)} r(\theta_j) \| V(\cdot; r(\theta_j)) \|_{L_2(P_N)} \int_0^1 \log^{1/2} (1/\tau)d\tau\r)
}
and 
\[
\mb E\sup_{\theta\in \Theta:\|\theta - \theta_0\|\leq R} \frac{1}{\sqrt N}\l| \sum_{j=1}^N \eps_j (\ell(\theta,X_j) - \ell(\theta_0,X_j))\r| 
\leq 
C \log^{1/2}(N(R))\sum_{j=1}^{N(R)} r(\theta_j) \| \m V(\cdot; r(\theta_j))\|_{L_2(P)}<\infty.
\]
It remains to establish that $\pr{\l\| \wh\theta_{n,k}^{(1)} - \theta_0\r\|>R \text{ or } \l\| \wh\theta_{n,k}^{(2)}-\theta_0\r\|>R} \to 0$. To this end, notice that by the definition of $\wh\theta_{n,k}^{(1)}$,
\ml{
0\leq\wh L(\wh\theta_{n,k}^{(1)},\wh\theta_{n,k}^{(2)}) \leq \wh L\l(\theta_0,\wh\theta(\theta_0)\r) \leq \underbrace{L(\theta_0) - L\l( \wh\theta(\theta_0)\r)}_{\leq 0} + \sup_{\|\theta - \theta_0\|\leq R}\l|\wh L(\theta_0,\theta) - L(\theta_0,\theta)\r|
}
on the event $\l\{ \|\wh\theta(\theta_0) - \theta_0\| \leq R\r\}$. It has already been established that 
\[
\sup_{\|\theta - \theta_0\|\leq R}\l|\wh L(\theta_0,\theta) - L(\theta_0,\theta)\r|\to 0 \text{ in probability.}
\] 
To show that $\pr{\|\wh\theta(\theta_0) - \theta_0\| > R} \to 0$ for $R$ large enough and as $n,k\to \infty$, 
recall that
\[
B(n,R,t) = \pr{\inf_{\|\theta - \theta_0\|\geq R} \frac{1}{n}\sum_{j=1}^n \ell(\theta,X_j) < L(\theta_0) + t }
\] 
and that $\lim_{R\to\infty} \limsup_{n\to\infty} B(n,R,t) = 0$ for some $t>0$ in view of Assumption \ref{ass:4}. As moreover $\frac{1}{n}\sum_{j=1}^n \ell(\theta_0,X_j)\to L(\theta_0)$ in probability, one can choose $R_0$ and $n_0$ such that 
\[
\tilde B(n,R,t)=\pr{\inf_{\|\theta - \theta_0\|\geq R} \frac{1}{n}\sum_{j=1}^n \ell(\theta,X_j) - \frac{1}{n}\sum_{j=1}^n \ell(\theta_0,X_j) < t/2 }<0.01
\] 
for all $n\geq n_0$ and $R\geq R_0$. As $\hL(\theta,\theta_0) = \argmin_{z \in \R} \sum_{j = 1}^k \rho\left(\frac{\sqrt{n}}{\Delta_n}(\bar L_{j}(\theta) - \bar L_{j}(\theta_0) - z)\right)$, 
it solves the equation $\sum_{j = 1}^k \rho'\left(\frac{\sqrt{n}}{\Delta_n}(\bar{L}_{j}(\theta) - \bar L_{j}(\theta_0) - \hL(\theta,\theta_0))\right) = 0$. 
Assumption \ref{ass:1} implies that $\rho'(x) = \|\rho'\|_\infty$ for $x \geq 2$. 
Therefore, $\hL(\theta,\theta_0) < t/4$ only if $\bar L_j(\theta) - \bar L_{j}(\theta_0) < t/4+ 2\frac{\Delta_n}{\sqrt n}$ for $j\in J$ such that $|J|\geq k/2$. 
To see this, suppose that there exists a subset $J'\subseteq \{1,\ldots,k\}$ of cardinality $|J'|>k/2$ such that $\bar L_j(\theta) - \bar L_{j}(\theta_0) \geq t/4 + 2\frac{\Delta_n}{\sqrt n}$ for $j\in J'$ while $\hL(\theta,\theta_0) < t/4$. 
In turn, it implies that 
$\bar L_j(\theta) - \bar L_{j}(\theta_0) >  2\frac{\Delta_n}{\sqrt n}, \ j\in J'$, whence 
\[
\sum_{j = 1}^k \rho'\left(\frac{\sqrt{n}}{\Delta_n}(L_{j}(\theta) - \bar L_{j}(\theta_0) - \hL(\theta,\theta_0))\right) > \frac{k}{2}\|\rho\|_\infty + \sum_{j\notin J'} \rho'\left(\frac{\sqrt{n}}{\Delta_n}(L_{j}(\theta) - \bar L_{j}(\theta_0) - \hL(\theta,\theta_0))\right) > 0,
\] 
leading to a contradiction. Therefore, 
\mln{
\label{eq:mom}
\pr{\inf_{\|\theta - \theta_0\|\geq R} \hL(\theta,\theta_0) <  t/4} 
\\
\leq \pr{\exists J\subseteq \{1,\ldots,k\}, \ |J|\geq k/2: \ \inf_{\|\theta - \theta_0\|\geq R} \bar L_j(\theta) -\bar L_{j}(\theta_0) < t/4 + 2\frac{\Delta_n}{\sqrt n}, \ j\in J} 
\\
\leq {k\choose \lfloor k/2 \rfloor} \l( \tilde B(n,R,t) \r)^{k/2} \leq (2e)^{k/2}  \l(\tilde B(n,R,t)\r)^{k/2}
}
whenever $2\frac{\Delta_n}{\sqrt n} \leq t/2$ and where we used the inequality ${M \choose l}\leq \l( Me/l\r)^l$. 
Moreover, if $n\geq n_0$ and $R\geq R_0$, we deduce that $(2e)^{k/2}  \l(\tilde B(n,R,t)\r)^{k/2} < 0.25^k \to 0$ as $k\to\infty$. 
As $\hL(\theta_0,\theta_0) \equiv 0$, the preceding display implies that 
$\pr{\|\wh \theta(\theta_0) - \theta_0\| < R}\to 1$ as $n,k,R\to\infty$. We have thus shown that
\ben
\label{eq:rhs}
\wh L(\wh\theta_{n,k}^{(1)},\wh\theta_{n,k}^{(2)}) \to 0 \text{ in probability. }
\een
On the other hand, by the definition of $\wh\theta_{n,k}^{(2)}$, it holds that $\wh L(\wh\theta_{n,k}^{(1)},\wh\theta_{n,k}^{(2)}) \geq \wh L(\wh\theta_{n,k}^{(1)},\theta_0)$. 
Now, assume that $\| \wh\theta_{n,k}^{(1)} - \theta_0\| > R$ while $\wh L(\wh\theta_{n,k}^{(1)},\theta_0) < L(\theta_0) + t/2 - L(\theta_0) = t/2$. Arguing as before, we see that there exists $J'\subset \{1,\ldots,k\}$ such that $|J'|>k/2$ and 
$\bar L_j(\wh\theta_{n,k}^{(1)}) - \bar L_j(\theta_0) < L(\theta_0) + t/2 - L(\theta_0) + 2\frac{\Delta_n}{\sqrt n}$ for $j\in J'$, which implies the inequalities
\[
\inf_{\|\theta -\theta_0\|>R} \bar L_j(\theta) < L(\theta_0) + t/2 + 2\frac{\Delta_n}{\sqrt n} + \l( \bar L_j(\theta_0)-L(\theta_0)\r), \ j\in J'.
\] 
Clearly, $\pr{\l| \l( \bar L_j(\theta_0)-L(\theta_0)\r)\r| \geq t/4}\leq \frac{16}{n t^2}\var\l( \ell(\theta_0,X)\r)$, therefore, for $n$ and $R$ large enough, $\pr{\inf_{\|\theta -\theta_0\|>R} \bar L_j(\theta) < L(\theta_0) + t/2 + 2\frac{\Delta_n}{\sqrt n} + \l( \bar L_j(\theta_0)-L(\theta_0)\r)}<0.01$ for any $j$. 
Reasoning as in \eqref{eq:mom}, we see that 
\[
\pr{\wh L(\wh\theta_{n,k}^{(1)},\theta_0) < t/2 \text{ and } \| \wh\theta_{n,k}^{(1)} - \theta_0\| > R} \to 0 \text{ as } k,n\to\infty.
\]
We deduce that on the one hand,
\[
\pr{\wh L(\wh\theta_{n,k}^{(1)},\theta_0) \geq t/2 \bigcap \| \wh\theta_{n,k}^{(1)} - \theta_0\| > R} \to \pr{ \| \wh\theta_{n,k}^{(1)} - \theta_0\| > R}.
\]
In view of \eqref{eq:rhs}, we see that on the other hand,
\[
\pr{\wh L(\wh\theta_{n,k}^{(1)},\theta_0) \geq t/2 \bigcap \| \wh\theta_{n,k}^{(1)} - \theta_0\| > R} \leq \pr{\wh L(\wh\theta_{n,k}^{(1)},\theta_0) \geq t/2} \to 0,
\]
implying that $\pr{\| \wh\theta_{n,k}^{(1)} - \theta_0\| > R} \to 0$ for $R$ large enough as $n,k\to\infty$. 

Finally, assume that $\| \wh\theta_{n,k}^{(2)} - \theta_0\| > R$ and that $\wh L(\wh\theta_{n,k}^{(1)},\wh\theta_{n,k}^{(2)})>L(\wh\theta_{n,k}^{(1)}) - L(\theta_0) - t/2$. 
Repeating the reasoning behind \eqref{eq:mom}, we see that the latter implies that there exists $J'\subset \{1,\ldots,k\}$ such that $|J'|>k/2$ and 
$\bar L_j(\wh\theta_{n,k}^{(1)}) - \bar L_j(\wh\theta_{n,k}^{(2)}) > L(\wh\theta_{n,k}^{(1)}) - \l( L(\theta_0) + t/2 + 2\frac{\Delta_n}{\sqrt n} \r)$ for $j\in J'$, yielding that on the event $\l\{ \| \wh\theta_{n,k}^{(1)} - \theta_0\|\leq R\r\}$,
\ml{
\inf_{\|\theta - \theta_0\|>R} \bar L_j(\theta) < L(\theta_0) + t/2 + 2\frac{\Delta_n}{\sqrt n} + \l(\bar L_j(\wh\theta_{n,k}^{(1)}) - L(\wh\theta_{n,k}^{(1)})\r) 
\\
\leq L(\theta_0) + t/2 + 2\frac{\Delta_n}{\sqrt n} + \sup_{\|\theta'-\theta_0\|\leq R} \l| \bar L_j(\theta') - L(\theta')\r|
}
for $j\in J'$. We have shown before that $\pr{ \| \wh\theta_{n,k}^{(1)} - \theta_0\|>R}\to 0$ for $R$ large enough as $n,k\to\infty$. As $\mb E \sup_{\|\theta'-\theta_0\|\leq R} \l| \bar L_j(\theta') - L(\theta')\r| \to 0$ for any $R>0$ as $n\to\infty$ (indeed, this follows from \eqref{eq:exp-sup} and the triangle inequality), for $n$ and $R$ large enough, the argument similar to \eqref{eq:mom} implies that
\[
\pr{\sup_{\|\theta - \theta_0\| > R} \hL(\wh\theta_{n,k}^{(1)},\theta) >  L(\wh\theta_{n,k}^{(1)}) - L(\theta_0) - t/2}\to 0 \text{ as } k\to\infty,
\]
therefore $\pr{\| \wh\theta_{n,k}^{(2)} - \theta_0\| > R \,\bigcap\, \wh L\l( \wh\theta_{n,k}^{(1)}, \wh\theta_{n,k}^{(2)}\r) \leq L( \wh\theta_{n,k}^{(1)}) - (L(\theta_0) + t/2) } \to \pr{\| \wh\theta_{n,k}^{(2)} - \theta_0\| > R}$.
On the other hand, 
\ml{
\pr{ \wh L\l( \wh\theta_{n,k}^{(1)}, \wh\theta_{n,k}^{(2)}\r) \leq L( \wh\theta_{n,k}^{(1)}) - (L(\theta_0) + t/2)} \leq 
\pr{  \wh L\l( \wh\theta_{n,k}^{(1)}, \theta_0\r) \leq L( \wh\theta_{n,k}^{(1)}) - (L(\theta_0) + t/2)}
\\
\pr{L(\wh\theta_{n,k}^{(1)}) - L(\theta_0) - \sup_{\| \theta - \theta_0\|\leq R} \l| \wh L(\theta,\theta_0) - (L(\theta)-L(\theta_0))\r| \leq L( \wh\theta_{n,k}^{(1)}) - (L(\theta_0) + t/2)} 
\\
+\pr{\|\wh\theta_{n,k}^{(1)} - \theta_0\|>R}
= \pr{\sup_{\| \theta - \theta_0\|\leq R} \l| \wh L(\theta,\theta_0) - (L(\theta)-L(\theta_0))\r|\geq t/2} + \pr{\|\wh\theta_{n,k}^{(1)} - \theta_0\|>R} \to 0
}
for $R$ large enough as $n,k\to\infty$, therefore completing the proof of consistency.

\subsection{Proof of Theorem \ref{th:normality-B}.}
\label{proof:normality}

As in the proof of Theorem \ref{th:consistency}, we will omit subscript $j$ and write ``$k,n$'' instead of ``$k_j,n_j$'' to denote the increasing sequences of the number of subgroups and their cardinalities. 
The argument is divided into two steps. 
The first step consists in establishing the fact that the estimator $\wh\theta_{n,k}$ converges to $\theta_0$ at $\sqrt N$-rate, while on the second step we prove asymptotic normality by ``zooming'' to the resolution level $N^{-1/2}$; this proof pattern is quite standard in the empirical process theory \citep{wellner1}. 

As in the proof of Theorem \ref{th:consistency}, we set  $\wh \theta(\theta'):=\argmax_{\theta\in \Theta} \wh L(\theta',\theta) = \argmin_{\theta\in\Theta}\wh L(\theta,\theta')$ and define $\wh\theta_{n,k}^{(1)}:=\wh\theta_{n,k}$ and $\wh\theta_{n,k}^{(2)}:=\wh \theta(\wh\theta_{n,k}^{(1)})$.
We present a detailed argument establishing the convergence rate for $\wh\theta_{n,k}^{(1)}$, and outline the modifications necessary to establish the result for $\wh\theta_{n,k}^{(2)}$. 
Our goal is to show that 
\ben
\label{eq:theta_N^1}
\lim_{M\to\infty}\limsup_{n,k\to\infty}\pr{\sqrt{N}\| \wh\theta_{n,k}^{(1)} - \theta_0\| \geq 2^M} = 0.
\een 
Define $S_{N,j}:=\l\{ \theta: \ 2^{j-1}/\sqrt{N}<\|\theta - \theta_0\| \leq 2^j/\sqrt{N} \r\}$, $\bar S_{N,j}:=\l\{ \theta: \ 0\leq \|\theta - \theta_0\| \leq 2^j/\sqrt{N} \r\}$, and observe that 
\[
\sqrt{N}\| \wh\theta_{n,k}^{(1)} - \theta_0\| \geq 2^M \implies \inf_{\theta\in S_{N,j}} \l( \wh L(\theta,\wh\theta(\theta)) - \wh L(\theta_0,\wh\theta(\theta_0))\r) \leq 0 \text{ for some } j>M, 
\] 
where $\wh \theta(\theta'):=\argmax_{\theta\in \Theta} \wh L(\theta',\theta)$. As $\wh L(\theta,\wh\theta(\theta))\geq \wh L(\theta,\theta_0)$ for any $\theta$,  the inequality $\sqrt{N}\| \wh\theta^{(1)} - \theta_0\| \geq 2^M$ implies that
$\inf_{\theta\in S_{N,j}} \l( \wh L(\theta,\theta_0) - \wh L(\theta_0,\wh\theta(\theta_0)\r) \leq 0 \text{ for some } j>M,$
which in turn entails that 
\[
\inf_{\theta\in S_{N,j}} \l( \wh L(\theta,\theta_0) - L(\theta,\theta_0) - \wh L(\theta_0,\wh\theta(\theta_0)) + L(\theta_0,\wh\theta(\theta_0))\r) \leq L(\theta_0,\wh\theta(\theta_0)) - \inf_{\theta\in S_{N,j}} L(\theta,\theta_0)
\]
for some $j>M$. Since $L(\theta_0,\wh\theta(\theta_0)) - \inf_{\theta \in S_{N,j}} L(\theta,\theta_0) \leq 0$ by the definition of $\theta_0$, 
the previous display yields that
\ml{
\sup_{\theta\in S_{N,j}} \l| \wh L(\theta,\theta_0) - L(\theta,\theta_0) - \wh L(\theta_0,\wh\theta(\theta_0)) + L(\theta_0,\wh\theta(\theta_0))\r| 
\\
\geq  \inf_{\theta\in S_{N,j}} L(\theta,\theta_0) - L(\theta_0,\wh\theta(\theta_0)) \geq  \inf_{\theta\in S_{N,j}} L(\theta,\theta_0),
}
which further implies that either 
\[
\sup_{\theta\in S_{N,j}} \l| \wh L(\theta,\theta_0) - L(\theta,\theta_0)\r| \geq  \inf_{\theta\in S_{N,j}} \frac{L(\theta,\theta_0)}{2}, \text{ or } 
\l| \wh L(\theta_0,\wh\theta(\theta_0)) - L(\theta_0,\wh\theta(\theta_0))\r| \geq  \inf_{\theta\in S_{N,j}} \frac{L(\theta,\theta_0)}{2}.
\]
Let $0<\eta_1\leq r(\theta_0)$ be small enough so that $L(\theta) - L(\theta_0) \geq c \|\theta - \theta_0\|^2$ for $\theta$ such that $\|\theta-\theta_0\| \leq \eta_1$ (existence of $\eta_1$ follows from assumption \ref{ass:2}), and observe that $\pr{\| \wh\theta_{n,k}^{(1)} - \theta_0 \|\geq \eta_1}\to 0$ as $n,k\to\infty$ due to consistency of the estimator. We then have
\mln{
\label{eq:master-bound}
\pr{\sqrt{N}\| \wh\theta_{n,k}^{(1)} - \theta_0\| \geq 2^M} \leq \pr{\sqrt N\l| \wh L(\theta_0,\wh\theta(\theta_0)) - L(\theta_0,\wh\theta(\theta_0))\r| \geq c \frac{2^{2M}}{\sqrt N}}
\\
+ \pr{\bigcup_{j: j\geq M+1, \ \frac{2^j}{\sqrt N}\leq \eta_1}\sup_{\theta\in S_{N,j}} \sqrt{N}\l| \wh L(\theta,\theta_0) - L(\theta,\theta_0)\r| \geq c \frac{2^{2j-2}}{\sqrt N}} + \pr{\| \wh\theta_{n,k}^{(1)} - \theta_0\| \geq \eta_1}.
}
To estimate $\pr{\bigcup_{j: j\geq M+1, \ \frac{2^j}{\sqrt N}\leq \eta_1}\sup_{\theta\in S_{N,j}} \sqrt{N}\l| \wh L(\theta,\theta_0) - L(\theta,\theta_0)\r| \geq c \frac{2^{2j-2}}{\sqrt N}}$, we invoke Proposition \ref{lemma:b-k} applied to the class $\l\{ \ell(\theta,\cdot) - \ell(\theta_0,\cdot), \ \theta\in \bar S_{N,j}\r\}$. 
Since $|\ell(\theta,x) - \ell(\theta_0,x)| \leq V(x; r(\theta_0))\frac{2^j}{\sqrt N}$ by assumption \ref{ass:3}, it is easy to see that $\sigma^2(\delta)\leq \mb EV^2(x; r(\theta_0)) \frac{2^{2j}}{N}$. Together with the union bound applied over $M<j\leq J_{\max}:= \lfloor \log(\sqrt{N}\eta_1)\rfloor+1$ with $s_j:=j^2$, it implies that for all $\theta\in S_{N,j}$, $M+1\leq j\leq J_{\max}$,
\mln{
\label{eq:b-k-2}
\sqrt{N}\l( \wh L(\theta,\theta_0) - L(\theta,\theta_0) \r) 
 \\
 = \frac{\Delta_n}{\mb E \rho'' \l( \frac{\sqrt{n}}{\Delta_n} \l(\bar L_1(\theta) - \bar L_1(\theta_0) - L(\theta,\theta_0) \r) \r)}
\frac{1}{\sqrt{k}}\sum_{i=1}^k \rho' \l( \frac{\sqrt{n}}{\Delta_n}\l(\bar L_i(\theta) - \bar L_i(\theta_0) - L(\theta,\theta_0) \r)\r)  
\\
+\m R_{n,k,j}(\theta),
}
where 
\[
\sup_{\theta\in \bar S_{N,j}}\l| \m R_{n,k,j}(\theta) \r| \leq C(d,\theta_0) \l(\frac{2^{2j}}{N} \frac{j^4}{\sqrt k}  + \frac{2^{3j} j^2}{N^{3/2}} + \sqrt{k}\frac{2^{6j}}{N^3}\r)
\] 
uniformly over all $M\leq j\leq J_{\max}$ with probability at least $1 - 3  \sum_{j: j\geq M+1}  j^{-2} \geq 1 - \frac{C}{M}$. 
Let $\m E$ denote the event of probability at least $1 - \frac{C}{M}$ on which the previous representation holds. 
Moreover, observe that, in view of Lemma \ref{lemma:lindeberg}, for $\eta_1$ small enough and $N$ large enough,
\[
\sup_{\|\theta - \theta_0\|\leq \eta_1}\l| \mb E \rho'' \l( \frac{\sqrt{n}}{\Delta_n} \l(\bar L_1(\theta) - \bar L_1(\theta_0) - L(\theta,\theta_0) \r) \r) - \rho''(0)\r| \leq \frac{\rho''(0)}{2} = \frac{1}{2}.
\]
Taking this fact into account and noting that $\frac{2^{j}}{\sqrt N} \frac{j^4}{\sqrt k}  + \frac{2^{2j} j^2}{N} + \sqrt{k}\frac{2^{5j}}{N^{5/2}} \leq c 2^j$ for any $j\leq J_{\max}$ and any $c>0$ given that $n$ is large enough (in particular, it implies that the remainder term $\m R_{n,k,j}(\theta)$ is smaller than $\frac{c}{2}\frac{2^{2j-2}}{\sqrt N}$ on event $\m E$), we deduce that 
\ml{
\pr{\bigcup_{j: j\geq M+1, \ \frac{2^j}{\sqrt N}\leq \eta_1}\sup_{\theta\in S_{N,j}} \sqrt{N}\l| \wh L(\theta,\theta_0) - L(\theta,\theta_0)\r| \geq c \frac{2^{2j-2}}{\sqrt N}} \leq \frac{C}{M}
\\
+ \sum_{j: j\geq M+1, \ \frac{2^j}{\sqrt N}\leq \eta_1} \pr{\sup_{\theta\in S_{N,j}} 
\l| \frac{1}{\sqrt{k}}\sum_{i=1}^k \rho' \l( \frac{\sqrt{n}}{\Delta_n}\l(\bar L_i(\theta) - \bar L_i(\theta_0) - L(\theta,\theta_0) \r)\r) \r| \geq c_1 \frac{2^{2j}}{\sqrt N}}.
}
Invoking Lemma \ref{lemma:lindeberg} again, we see that 
\[
\sup_{\theta\in \bar S_{N,j}}\l| \mb E \rho' \l( \frac{\sqrt{n}}{\Delta_n}\l(\bar L_1(\theta) - \bar L_1(\theta_0) - L(\theta,\theta_0) \r)\r) \r|
\leq C \frac{2^{2j}}{N}.
\]
Let us denote $\rho'_{n,i}(\theta,\theta_0) =  \rho' \l( \frac{\sqrt{n}}{\Delta_n}\l(\bar L_i(\theta) - \bar L_i(\theta_0) - L(\theta,\theta_0) \r)\r), \ i=1,\ldots,k$ for brevity. As $\sqrt{k}\frac{2^{2j}}{N} \leq c' \frac{2^{2j}}{\sqrt N}$ for any $c'>0$ and sufficiently large $n$, 
\ml{
 \pr{\sup_{\theta\in S_{N,j}} 
\l| \frac{1}{\sqrt{k}}\sum_{i=1}^k \rho'_{n,i}(\theta,\theta_0)\r| \geq c_1 \frac{2^{2j}}{\sqrt N}} 
\\
\leq  \pr{ \sup_{\theta\in S_{N,j}} 
\Bigg| \frac{1}{\sqrt{k}}\sum_{i=1}^k \Bigg(\rho'_{n,i}(\theta,\theta_0) - \mb E\rho'_{n,i}(\theta,\theta_0) \Bigg) \Bigg| \geq c_2 \frac{2^{2j}}{\sqrt N} }
\\
\leq \frac{\sqrt{N}}{c_2 2^{2j}} \mb E\sup_{\theta\in S_{N,j}} 
\Bigg| \frac{1}{\sqrt{k}}\sum_{i=1}^k \Bigg(\rho'_{n,i}(\theta,\theta_0) - \mb E\rho'_{n,i}(\theta,\theta_0) \Bigg) \Bigg|
}
where we used Markov's inequality on the last step. 
To bound the expected supremum, we proceed in exactly the same fashion (using symmetrization, contraction and desymmetrization inequalities) as in the proof of Proposition \ref{lemma:denom}, and deduce that 
\ml{
\mb E\sup_{\theta\in S_{N,j}} 
\Bigg| \frac{1}{\sqrt{k}}\sum_{i=1}^k \Bigg(\rho'_{n,i}(\theta,\theta_0) - \mb E\rho'_{n,i}(\theta,\theta_0) \Bigg) \Bigg|
\\
\leq \frac{C}{\Delta_n}\mb E\sup_{\theta\in S_{N,j}} \l| \frac{1}{\sqrt{N}}\sum_{j=1}^N \l( \ell(\theta,X_j) - \ell(\theta_0,X_j) - L(\theta,\theta_0 \r) \r|.
}
The right side of the display above can be bounded by $\frac{C(d,\theta_0)}{\Delta_n}\frac{2^j}{\sqrt N}$ (using Lemma \ref{lemma:sup-power}), implying that 
\[
\pr{ \sup_{\theta\in S_{N,j}} 
\Bigg| \frac{1}{\sqrt{k}}\sum_{i=1}^k \Bigg(\rho'_{n,i}(\theta,\theta_0) - \mb E\rho'_{n,i}(\theta,\theta_0) \Bigg) \Bigg| \geq c_2 \frac{2^{2j}}{\sqrt N} } 
\leq  \frac{C(d,\theta_0)}{\Delta_n}\frac{1}{2^j},
\]
whence 
\ml{
\pr{\bigcup_{j: j\geq M+1, \ \frac{2^j}{\sqrt N}\leq \eta_1}\sup_{\theta\in S_{N,j}} \sqrt{N}\l| \wh L(\theta,\theta_0) - L(\theta,\theta_0)\r| \geq c \frac{2^{2j-2}}{\sqrt N}} 
\\
\leq \frac{C}{M} + \frac{C(d,\theta_0)}{\Delta_n}\sum_{j\geq M} 2^{-j} \leq \frac{C}{M} + \frac{C(d,\theta_0)}{\Delta_n}2^{-M+1} \to 0 \text{ as } M\to\infty
}
whenever $n,k$ are large enough. 
In view of \eqref{eq:master-bound}, it only remains to show that 
\ben
\label{eq:d00}
\pr{\sqrt{N} \l| \wh L(\theta_0,\wh\theta(\theta_0)) - L(\theta_0,\wh\theta(\theta_0))\r| \geq c \frac{2^{2M}}{\sqrt N}} \to 0
\text{ as } n,k\to\infty.
\een
To this end, it suffices to repeat the argument presented above, with several simplifications. 
First, we will start by proving that $\lim_{M\to\infty}\limsup_{n,k\to\infty}\pr{\sqrt{N}\| \wh\theta(\theta_0) - \theta_0\| \geq 2^M} = 0$. 
We have already shown in the course of the proof of Theorem \ref{th:consistency} that $\wh\theta(\theta_0)$ is a consistent estimator of $\theta_0$, so that $\pr{\| \wh\theta(\theta_0) - \theta_0 \| \geq \eta_2} \to 0$ for any $\eta_2>0$. 
If $\sqrt{N}\| \wh\theta(\theta_0) - \theta_0\| \geq 2^M$, then $\wh\theta(\theta_0)\in S_{N,j}$ for some $j>M$, implying that 
$\sup_{\theta\in S_{N,j}} \wh L(\theta_0,\theta) \geq \wh L(\theta_0,\theta_0) = 0$, which entails the inequality 
$\sup_{\theta\in S_{N,j}} \l(\wh L(\theta_0,\theta) - L(\theta_0,\theta) \r) \geq - \sup_{\theta\in S_{N,j}} L(\theta_0,\theta) = 
 \inf_{\theta\in S_{N,j}} L(\theta,\theta_0)\geq c \frac{2^{2j-2}}{N}$ whenever $2^j/\sqrt{N}\leq \eta_2$ and $\eta_2$ is small enough. 
Therefore,
\ml{
\pr{\sqrt{N}\| \wh\theta(\theta_0) - \theta_0\| \geq 2^M} 
\leq \pr{\| \wh\theta(\theta_0) - \theta_0 \| \geq \eta_2}
\\
+ \pr{\bigcup_{j: j\geq M+1, \ \frac{2^j}{\sqrt N}\leq \eta_2}\sup_{\theta\in S_{N,j}} \sqrt{N}\l| \wh L(\theta_0,\theta) - L(\theta_0,\theta)\r| \geq c \frac{2^{2j-2}}{\sqrt N}}.
}
The probability of the union is estimated as before using Proposition \ref{lemma:b-k}, implying that it converges to $0$ as $M\to\infty$. 
To complete the proof of \eqref{eq:d00}, observe that 
\ml{
\pr{\sqrt{N}\l| \wh L(\theta_0,\wh\theta(\theta_0)) - L(\theta_0,\wh\theta(\theta_0))\r| > c \frac{2^{2M}}{\sqrt N}} 
\leq \pr{\| \wh\theta(\theta_0) - \theta_0 \| \geq \frac{2^{M}}{\sqrt N}} 
\\
+ \pr{\sup_{\|\theta - \theta_0\|\leq \frac{2^{M}}{\sqrt N}} \sqrt{N}\l| \wh L(\theta_0,\theta) - L(\theta_0,\theta)\r| \geq 
 c \frac{2^{2M}}{\sqrt N} }
}
and that $ \pr{\sup_{\|\theta - \theta_0\|\leq \frac{2^{M}}{\sqrt N}} \sqrt{N}\l| \wh L(\theta_0,\theta) - L(\theta_0,\theta)\r| \geq 
 c \frac{2^{2M}}{\sqrt N} }\leq \frac{C}{M} + \frac{C(d,\theta_0)}{\Delta_n}2^{-M} \to 0$ as $M\to \infty$, which follows from the representation \eqref{eq:b-k-2} in the same fashion as before. This completes the proof of relation \eqref{eq:theta_N^1}. 
To establish that 
\[
\lim_{M\to\infty}\limsup_{n,k\to\infty}\pr{\sqrt{N}\| \wh\theta_{n,k}^{(2)} - \theta_0\| \geq 2^M} = 0,
\]
we begin by observing that the inequality $\sqrt{N}\| \wh\theta_{n,k}^{(2)} - \theta_0\| \geq 2^M$ 
implies that $\sup_{\theta\in S_{N,j}} \wh L(\wh\theta_{n,k}^{(1)},\theta) \geq \wh L(\wh\theta_{n,k}^{(1)},\theta_0)$ for some $j>M$. If $\frac{2^j}{\sqrt{N}}\leq \eta_3$ for sufficiently small constant $\eta_3>0$, we see that it further entails the inequality
\ml{
\sup_{\theta\in S_{N,j}} \l( \wh L(\wh\theta_{n,k}^{(1)},\theta) - L(\wh\theta_{n,k}^{(1)},\theta) - \wh L(\wh\theta_{n,k}^{(1)},\theta_0) + L(\wh\theta_{n,k}^{(1)},\theta_0)\r) 
\\
\geq -\sup_{\theta\in S_{N,j}} L(\wh\theta_{n,k}^{(1)},\theta) + L(\wh\theta_{n,k}^{(1)},\theta_0)
= \inf_{\theta\in S_{N,j}} L(\theta,\wh\theta_{n,k}^{(1)}) +  L(\wh\theta_{n,k}^{(1)},\theta_0) 
\\
= \inf_{\theta\in S_{N,j}} L(\theta,\theta_0) \geq c \frac{2^{2j-2}}{N}.
}
We deduce from the display above that 
\ml{
\pr{\sqrt{N}\| \wh\theta_{n,k}^{(2)} - \theta_0\| \geq 2^M} \leq \pr{\| \wh\theta_{n,k}^{(2)} - \theta_0\| \geq \eta_3} + \pr{\sqrt{N}\| \wh\theta_{n,k}^{(1)} - \theta_0\| \geq 2^{M/2}}
\\
+ \pr{\bigcup_{j: j\geq M+1, \ \frac{2^j}{\sqrt N}\leq \eta_3}\sup_{\theta\in S_{N,j},\theta'\in \bar S_{N,M/2}} \sqrt{N}\l| \wh L(\theta',\theta) - L(\theta',\theta)\r| \geq c_1 \frac{2^{2j-2}}{\sqrt N}}
\\
+\pr{ \sup_{\theta\in \bar S_{N,M/2}} \sqrt{N}\l| \wh L(\theta,\theta_0) - L(\theta,\theta_0)\r| \geq c_1 \frac{2^{2M}}{\sqrt N} }.
}
We have shown before that the first and second term on the right side of the previous display converge to $0$ as $M$, $n$ and $k$ tend to infinity, while the last term converges to $0$ in view of argument presented previously in detail (see representation \eqref{eq:b-k-2} and the bounds that follow). 
It remains to estimate $\pr{\bigcup_{j: j\geq M+1, \ \frac{2^j}{\sqrt N}\leq \eta_3}\sup_{\theta\in S_{N,j},\theta'\in \bar S_{N,M/2}} \sqrt{N}\l| \wh L(\theta',\theta) - L(\theta',\theta)\r| \geq c_1 \frac{2^{2j-2}}{\sqrt N}}$. 
To this end, we again invoke Proposition \ref{lemma:b-k} applied to the class $\l\{ \ell(\theta_1,\cdot)-\ell(\theta_2,\cdot), \ \theta_1\in \bar S_{N,M/2}, \theta_2\in \bar S_{N,j}\r\}$; here, the ``reference point'' is $(\theta_0,\theta_0)$. 
Since $|\ell(\theta,x) - \ell(\theta',x)| \leq V(x; r(\theta_0))\frac{2^j+ 2^{M/2}}{\sqrt N}$, it is easy to see that $\sigma^2(\delta)\leq \mb EM^2_{\theta_0}(X) \frac{2^{2j+1} + 2^M}{N} \leq C(\theta_0) \frac{2^{2j}}{N}$, and to deduce that 
\ml{
\sqrt{N}\l( \wh L(\theta',\theta) - L(\theta',\theta) \r)
 \\
 = \frac{\Delta_n}{\mb E \rho'' \l( \frac{\sqrt{n}}{\Delta_n} \l(\bar L_1(\theta') - \bar L_1(\theta) - L(\theta',\theta) \r) \r)}
\frac{1}{\sqrt{k}}\sum_{i=1}^k \rho' \l( \frac{\sqrt{n}}{\Delta_n}\l(\bar L_i(\theta') - \bar L_i(\theta) - L(\theta',\theta) \r)\r)  
\\
+\m R_{n,k,j}(\theta',\theta),
}
where 
\[
\sup_{ \theta\in \bar S_{N,j},\theta'\in \bar S_{N,M/2} }\l| \m R_{n,k,j}(\theta',\theta) \r| \leq C(d,\theta_0) \l(\frac{2^{2j}}{N} \frac{j^4}{\sqrt k} + \frac{2^{3j} j^2}{N^{3/2}} + \sqrt{k}\frac{2^{6j}}{N^3}\r)
\] 
uniformly over all $M\leq j\leq J_{\max}$ with probability at least $1 - \frac{C}{M}$. The remaining steps again closely mimic the argument outlined in detail after display \eqref{eq:b-k-2} and yield that 
\[
\pr{\bigcup_{j: j\geq M+1, \ \frac{2^j}{\sqrt N}\leq \eta_3}\sup_{\theta\in S_{N,j},\theta'\in \bar S_{N,M/2}} \sqrt{N}\l| \wh L(\theta',\theta) - L(\theta',\theta)\r| \geq c_1 \frac{2^{2j-2}}{\sqrt N}}
\leq \frac{C(d,\theta_0)}{\Delta_n}2^{-M+1} \to 0
\]
as $M\to \infty$, therefore implying the last claim in the first part of the proof.

Now we are ready to establish the asymptotic normality of $\wh\theta_{n,k}^{(1)}$ and $\wh\theta_{n,k}^{(2)}$. 
To this end, consider the stochastic process $M_N(h,q)$ indexed by $h,q\in \mb R^d$ and defined via
\[
M_N(h,q):=N\l( \wh L(\theta_0 + h/\sqrt N, \theta_0 + q/\sqrt N)) - L(\theta_0 + h/\sqrt N,\theta_0 + q/\sqrt N) \r).
\]
Below, we will show that $M_N(h,q)$ converges weakly to the Gaussian process $W(h,q):=W^T(h-q)$, $h,q\in \mb R^d$, where $W\sim N(0,\Sigma_W)$ and $\Sigma_W = \mb E \l[ \pd_\theta \ell(\theta_0,X) \pd_\theta \ell(\theta_0,X)^T\r]$. 
Let us deduce the conclusion assuming that weak convergence has already been established. 
We have that 
\be
N \cdot\wh L(\theta_0 + h/\sqrt N, \theta_0 + q/\sqrt N)) = N\cdot L(\theta_0 + h/\sqrt N,\theta_0 + q/\sqrt N) + M_N(h,q).
\ee
Note that, in view of assumption \ref{ass:2} and the fact that $\theta_0$ minimizes $L(\theta_0)$, 
\[
N\cdot L(\theta_0 + h/\sqrt N,\theta_0 + q/\sqrt N) \to \frac{1}{2} h^T \pd^2_\theta L(\theta_0) h - \frac{1}{2} q^T \pd^2_\theta L(\theta_0) q \text{ as } N\to\infty,
\]
therefore $N \cdot \wh L(\theta_0 + h/\sqrt N, \theta_0 + q/\sqrt N)) \xrightarrow{d} W^T h +  \frac{1}{2} h^T \pd^2_\theta L(\theta_0) h - \l( W^T q - \frac{1}{2} q^T \pd^2_\theta L(\theta_0) q\r)$. 
It is easy to see that 
\ml{
\l( - \l[\pd^2_\theta L(\theta_0)\r]^{-1} W, - \l[\pd^2_\theta L(\theta_0)\r]^{-1} W\r) 
\\
= \argmin_h\max_{q}W^T h +  \frac{1}{2} h^T \pd^d_\theta L(\theta_0) h - \l( W^T q - \frac{1}{2} q^T \pd^d_\theta L(\theta_0) q\r),
}
where $-\l[\pd^2_\theta L(\theta_0)\r]^{-1} W \sim N\l(0,\l[\pd^2_\theta L(\theta_0)\r]^{-1}\Sigma_W\l[\pd^2_\theta L(\theta_0)\r]^{-1}\r)$. 
Therefore, since 
\[
\l(\sqrt{N}\l(\wh \theta_{n,k}^{(1)} - \theta_0\r), \sqrt{N}\l(\wh \theta_{n,k}^{(2)} - \theta_0\r)\r)=\argmin_h \max_q \wh L(\theta_0 + h/\sqrt N, \theta_0 + q/\sqrt N)),
\] 
continuous mapping theorem yields the desired conclusion.

\noindent Next, we will establish the required weak convergence. 
To this end, we apply Proposition \ref{lemma:b-k} to the class 
\ben
\label{eq:new-class}
\wt{\m L}_N:=\l\{ \wt\ell_N(h,q,\cdot):=\ell(\theta_0 + h/\sqrt N,\cdot) - \ell(\theta_0 + q/\sqrt N, \cdot), \ \l\|  \begin{pmatrix} h \\ q \end{pmatrix}\r\|\leq R \r\},
\een
and note that $\l\| \begin{pmatrix} \theta_0 + h/\sqrt N \\ \theta_0 + q/\sqrt N \end{pmatrix} -  \begin{pmatrix} \theta_0  \\ \theta_0 \end{pmatrix} \r\| \leq \frac{R}{\sqrt N}$. 
We will also introduce the following notation for brevity (that will be used only in this part of the proof): 
\begin{align}
\nonumber
\bar L_j(h,q)&:=\frac{1}{n}\sum_{i\in G_j} \wt\ell_N(h,q,X_i), 
\\
\label{eq:notation}
\wt L(h,q)&:=\mb E\wt\ell_N(h,q,X).
\end{align}
The quantities $\delta$ and $\sigma^2(\delta)$ defined in Proposition \ref{lemma:b-k} admit the bounds 
$\delta\leq \frac{R}{\sqrt N}$ and, in view of assumption \ref{ass:3},
\ben
\label{eq:var-delta}
\sigma^2(\delta):=\sup_{\l\|  (h, q)^T \r\|\leq R} \var\l( \wt\ell_N(h,q,X)\r) 
\leq 2\mb E V^2(X; r(\theta_0)) \frac{R^2}{N},
\een
hence Proposition \ref{lemma:b-k} yields that 
\ml{
M_N(h,q) = \frac{\Delta_n}{\mb E \rho'' \l(  \frac{\sqrt{n}}{\Delta_n}\l(\bar L_1(h,q) - \wt L(h,q) \r)\r)}
\frac{\sqrt{N}}{\sqrt{k}}\sum_{j=1}^k \rho' \l( \frac{\sqrt{n}}{\Delta_n}\l(\bar L_j(h,q) - \wt L(h,q) \r) \r) + o_P(1)
}
uniformly over $\l\|  (h, q)^T \r\|\leq R$. In view of assumption \ref{ass:1}, 
\[
\pr{\l|  \frac{\sqrt{n}}{\Delta_n}\l(\bar L_j(h,q) - \wt L(h,q) \r) \r| \leq 1}\leq \mb E \rho'' \l(  \frac{\sqrt{n}}{\Delta_n}\l(\bar L_1(h,q) - \wt L(h,q) \r)\r) \leq 1.
\]
As $\sup_{\l\|  (h, q)^T \r\|\leq R} \pr{\l|  \frac{\sqrt{n}}{\Delta_n}\l(\bar L_j(h,q) - \wt L(h,q) \r) \r| \geq 1}\leq \sup_{\l\|  (h, q)^T \r\|\leq R} \frac{\var\l( \wt \ell(h,q,X\r)}{\Delta_n^2}\to 0$ as $n,k\to \infty$, we deduce that 
$\mb E \rho'' \l(  \frac{\sqrt{n}}{\Delta_n}\l(\bar L_1(h,q) - \wt L(h,q) \r)\r)\to 1$ and 
\ben
\label{eq:lin}
M_N(h,q) = \Delta_n
\frac{\sqrt{N}}{\sqrt{k}}\sum_{j=1}^k \rho' \l( \frac{\sqrt{n}}{\Delta_n}\l(\bar L_j(h,q) - \wt L(h,q) \r) \r) + o_P(1).
\een	
It remains to establish convergence of the finite dimensional distributions as well as asymptotic equicontinuity. 
Convergence of finite dimensional distributions will be deduced from Lindeberg-Feller's central limit theorem. 
As $\rho'(x)=x$ for $|x|\leq 1$ by assumption \ref{ass:1}, 
\[
\rho' \l( \frac{\sqrt{n}}{\Delta_n}\l(\bar L_j(h,q) - \wt L(h,q) \r) \r) = \frac{\sqrt{n}}{\Delta_n}\l(\bar L_j(h,q) - \wt L(h,q) \r)
\]
on the event $\m C_j:=\l\{ \l|  \frac{\sqrt{n}}{\Delta_n}\l(\bar L_j(h,q) - \wt L(h,q) \r) \r| \leq 1\r\}$. 
Chebyshev's inequality and assumption \ref{ass:3} imply that 
\be
\pr{\bar{\m C}_j} \leq \var \l( \frac{\sqrt{n}}{\Delta_n}\l(\bar L_j(h,q) - \wt L(h,q) \r) \r) \leq \frac{\mb E \wt \ell^2(h,q,X)}{\Delta_n^2}
\leq \frac{\mb E \m V^2(X; r(\theta_0)) \|h-q\|^2}{\Delta_n^2 N},
\ee
therefore, $\pr{\bigcup_{j=1}^k \bar{\m C}_j }\leq \frac{\mb E \m V^2(X; r(\theta_0)) \|h-q\|^2}{\Delta_n^2 n}\to 0$ as $n\to\infty$, and 
\ml{
M_N(h,q) = \Delta_n \frac{\sqrt N}{\sqrt k}\sum_{j=1}^k \frac{\sqrt{n}}{\Delta_n}\l(\bar L_j(h,q) - \wt L(h,q) \r) +o_P(1)
\\
= \frac{1}{\sqrt N}\sum_{j=1}^N \sqrt{N}\l( \wt \ell_N(h,q,X_j) - \wt L(h,q)\r) + o_P(1)
}
on the event $\bigcap_{j=1}^k \m C_j$. 
Hence, the limits of the finite dimensional distributions of the processes 
$M_N(h,q)$ and $\wh M_N(h,q):=\frac{1}{\sqrt N}\sum_{j=1}^N \sqrt{N}\l( \wt \ell_N(h,q,X_j) - \wt L(h,q)\r)$ coincide. 
It is easy to conclude from the Lindeberg-Feller's theorem that the finite dimensional distributions of the process $(h,q)\mapsto\wh M_N(h,q)$ are Gaussian, with covariance function
\ben
\label{eq:covariance}
\lim_{N\to\infty}\cov\l( \wh M_N(h_1,q_1), \wh M_N(h_2,q_2)\r) = \l( h_1 - q_1\r)^T \mb E\l[ \pd_\theta \ell(\theta_0,X) \l( \pd_\theta \ell(\theta_0,X)\r)^T\r]  \l( h_2 - q_2\r),
\een
Indeed, the aforementioned relation follows from the dominated convergence theorem, where pointwise convergence and the ``domination''  hold due to assumption \ref{ass:3}. Lindeberg's condition is also easily verified, as 
$\l(\sqrt N\wt\ell_N(h,q,X)\r)^2 \leq \m V^2(X; r(\theta_0))\|h-q\|^2$, implying that the sequence $\l\{ \l(\sqrt{N_j}\,\wt\ell_{N_j}(h,q,X)\r)^2 \r\}_{j\geq 1}$ is uniformly integrable, where $N_j = n_j\cdot k_j$.

Finally, we will establish the asymptotic equicontinuity of the process $M_N(h,q)$. To this end, it suffices to prove that for any $\eps>0$, 
\[
\lim_{\delta\to 0}\limsup_{n,k\to\infty} \pr{ \sup_{\|(h_1,q_1)^T - (h_2,q_2)^T\|\leq \delta}\l| M_N(h_1,q_1) - M_N(h_2,q_2)\r| \geq \eps}\to 0,
\]
which would follow, in view of Proposition \ref{lemma:b-k}, from the relation
\mln{
\label{eq:e-cont}
\lim_{\delta\to 0}\limsup_{n,k\to\infty}\mb E\sup_{\|(h_1,q_1)^T - (h_2,q_2)^T\|\leq \delta}
\Bigg|\Delta_n
\frac{\sqrt{N}}{\sqrt{k}}\sum_{j=1}^k \Bigg( \rho' \l( \frac{\sqrt{n}}{\Delta_n}\l(\bar L_j(h_1,q_1) - \wt L(h_1,q_1) \r) \r)
\\
-  \rho' \l( \frac{\sqrt{n}}{\Delta_n}\l(\bar L_j(h_2,q_2) - \wt L(h_2,q_2) \r) \r)\Bigg) \Bigg| = 0.
}
To estimate the expected supremum in \eqref{eq:e-cont}, we first observe that for any $h,q$, 
\ben
\label{eq:unbias}
\sqrt{Nk}\l| \mb E \rho' \l( \frac{\sqrt{n}}{\Delta_n}\l(\bar L_1(h,q) - \wt L(h,q) \r) \r)\r| = o(1)
\een 
as $k,n\to\infty$ by Lemma \ref{lemma:lindeberg} and inequality \eqref{eq:var-delta}. Therefore, we only need to show that
\ml{
\limsup_{n,k\to\infty}\mb E\sup_{\|(h_1,q_1)^T - (h_2,q_2)^T\|\leq \delta}\l| M_N(h_1,q_1) - M_N(h_2,q_2) - \r.
\\
\l.\l(\mb EM_N(h_1,q_1) - \mb EM_N(h_2,q_2)\r)\r| \xrightarrow{\delta\to 0}0.
}
Next, we will apply symmetrization inequality with Gaussian weights \citep{wellner1}. 
Specifically, let $g_1,\ldots,g_k$ be i.i.d. $N(0,1)$ random variables independent of the data $X_1,\ldots,X_N$. 
Then, setting $B(\delta):=\l\{ (h_1,q_1),(h_2,q_2): \ \|(h_1,q_1)^T - (h_2,q_2)^T\|\leq \delta\r\}$, we have that 
\ml{
\mb E\sup_{B(\delta)}\l| M_N(h_1,q_1) - M_N(h_2,q_2) - \l(\mb EM_N(h_1,q_1) - \mb EM_N(h_2,q_2)\r)\r| \leq 
\\
C(\rho)\Delta_n \mb E\sup_{B(\delta)}
\Bigg|
\frac{\sqrt{N}}{\sqrt{k}}\sum_{j=1}^k g_j\Bigg( \rho' \l( \frac{\sqrt{n}}{\Delta_n}\l(\bar L_j(h_1,q_1) - \wt L(h_1,q_1) \r) \r) 
\\
-  \rho' \l( \frac{\sqrt{n}}{\Delta_n}\l(\bar L_j(h_2,q_2) - \wt L(h_2,q_2) \r) \r) \Bigg) \Bigg|.
}
Let us condition everything on $X_1,\ldots,X_N$; we will write $\mb E_g$ to denote the expectation with respect to $g_1,\ldots,g_k$ only. Consider the Gaussian process $Y_{n,k}(t)$ defined via 
\[
\mb R^k \ni t \mapsto Y_{n,k}(t):=\frac{1}{\sqrt{k}}\sum_{j=1}^k g_j \sqrt{N}\rho'(t_{j}),
\]
where $t_j := t_j(h,q) = \frac{\sqrt{n}}{\Delta_n}\l(\bar L_j(h,q) - \wt L(h,q) \r),  \ j=1,\ldots,k$. In what follows, we will rely on the ideas behind the proof of Theorem 2.10.6 in \cite{wellner1}. 
Let us partition the set $\{ (h,q): \|(h,q)\|\leq R\}$ into the subsets $S_j, \ j=1,\ldots,N(\delta)$ of diameter at most 
$\delta$ with respect to the Euclidean distance $\|\cdot\|$, and let $t^{(j)}:=t^{(j)}(h^{(j)},q^{(j)})\in S_j \ j=1,\ldots,N(\delta)$ be arbitrary points; we also note that  $N(\delta) \leq \l( \frac{6R}{\delta}\r)^{2d}$. 
Next, set $T^{(j)}:=\{ t(h,q): \ (h,q)\in S_j \}$.   
Our goal will be to show that 
\[
\limsup_{n,k\to\infty} \mb E\max_{j=1,\ldots,N(\delta)}\sup_{t\in T^{(j)}}\l| Y_{n,k}(t) - Y_{n,k}(t^{(j)}) \r| \to 0 \text{ as } \delta\to 0,
\]
whence the desired conclusion would follow from Theorem 1.5.6 in \cite{wellner1}. 
By Lemma 2.10.16 in \cite{wellner1}, 
\mln{
\label{eq:partition}
\mb E_g\max_{j=1,\ldots,N(\delta)}\sup_{t\in T^{(j)}}\l| Y_{n,k}(t) - Y_{n,k}(t^{(j)}) \r| 
\leq C\Bigg( \max_{j=1,\ldots,N(\delta)} \mb E_g\sup_{t\in T^{(j)}}\l| Y_{n,k}(t) - Y_{n,k}(t^{(j)}) \r|
\\
+ \log^{1/2}N(\delta) \max_{1\leq j\leq N(\delta)} \sup_{t\in T^{(j)}} \var^{1/2}_g\l( Y_{n,k}(t) - Y_{n,k}(t^{(j)})\r)\Bigg).
}
Observe that $\var_g\l( Y_{n,k}(t) - Y_{n,k}(t^{(j)})\r) = \frac{N}{k}\sum_{i=1}^k \l( \rho'(t_i) - \rho'(t^{(j)}_i))\r)^2$, therefore, 
\ml{
\mb E \max_{1\leq j\leq N(\delta)} \sup_{t\in T^{(j)}} \var_g^{1/2}\l( Y_{n,k}(t) - Y_{n,k}(t^{(j)})\r) 
\leq \mb E^{1/2} \sup_{t^{(1)},t^{(2)} }  \frac{N}{k}\sum_{i=1}^k \l( \rho'(t^{(1)}_i) - \rho'(t^{(2)}_i) \r)^2
\\
\leq \sqrt{N} L(\rho') \mb E^{1/2} \sup_{t^{(1)},t^{(2)} } \l(t^{(1)}_1 -t^{(2)}_1 \r)^2 
\\
= L(\rho') \mb E^{1/2} \sup_{\|(h_1,q_1) - (h_2,q_2)\|\leq \delta} \l( \frac{\sqrt{nN}}{\Delta_n}\l(\bar L_1(h_1,q_1) - \bar L_1(h_2,q_2) - (\wt L(h_1,q_1) - \wt L(h_2,q_2) \r) \r)^2,
}
where the supremum is taken over all $t^{(1)}(h_1,q_1), \ t^{(2)}(h_2,q_2)$ such that $\|(h_1,q_1) - (h_2,q_2)\|\leq \delta$. 
To estimate the last expected supremum, we invoke Lemma \ref{lemma:sup-power} with $f_{h,q}(X):=\ell(\theta_0+h/\sqrt N,X) - \ell(\theta_0+q/\sqrt N,X)$, noting that, in view of assumption \ref{ass:3}, 
\mln{
\label{eq:final}
\sqrt{N} |f_{h_1,q_1}(X) - f_{h_2,q_2}(X)|\leq \m V(X; r(\theta_0))\l( \|h_1-h_2\| + \|q_1 - q_2\|\r)
\\
\leq 2 \m V(X; r(\theta_0))\l\| (h_1,q_1) - (h_2,q_2)\r\|.
}
Therefore, 
\ml{
\mb E^{1/2} \sup_{\|(h_1,q_1) - (h_2,q_2)\|\leq \delta} \l( \frac{\sqrt{nN}}{\Delta_n}\l(\bar L_1(h_1,q_1) - \bar L_1(h_2,q_2) - (\wt L(h_1,q_1) - \wt L(h_2,q_2) \r) \r)^2 
\\
\leq C\sqrt{d}\mb E^{1/2} \m V^2(X; r(\theta_0))\cdot \delta,
}
yielding that the second term on the right side of \eqref{eq:partition} converges in probability to $0$ as $\delta\to 0$. 
It remains to show that the first term $\max_{j=1,\ldots,N(\delta)} \mb E_g\sup_{t\in T^{(j)}}\l| Y_{n,k}(t) - Y_{n,k}(t^{(j)}) \r|$ converges to $0$ in probability.
As $\rho'$ is Lipschitz continuous, the covariance function of $Y_{n,k}(t)$ satisfies 
\[
\mb E\l( Y_{n,k}(t^{(1)}) - Y_{n,k}(t^{(2)})\r)^2 \leq L^2(\rho')\frac{N}{k}\sum_{j=1}^k \l( t^{(1)}_{j} - t^{(2)}_j \r)^2, 
\]
where the right side corresponds to the variance of increments of the process 
\[
Z_{n,k}(t)=\frac{L(\rho')}{\sqrt k}\sum_{j=1}^k g_j \sqrt{N}t_j.
\] 
Therefore, Slepian's lemma \citep{LT-book-1991} implies that for any $j$,
\ml{
\mb E_g\sup_{t\in T^{(j)}}\l| Y_{n,k}(t) - Y_{n,k}(t^{(j)}) \r| 
\\
\leq \mb E_g \sup_{(h,q)\in S_j} \frac{1}{\sqrt k}\l| \frac{\sqrt{Nn}}{\Delta_n}\sum_{i=1}^k g_j\l( \bar L_i(h,q) -\bar L_i(h^{(j)},q^{(j)}) - (\wt L(h,q) - L(h^{(j)},q^{(j)}))\r) \r|.
}
In turn, it yields the inequality 
\ml{
\mb E\max_{j=1,\ldots,N(\delta)} \mb E_g\sup_{t\in T^{(j)}}\l| Y_{n,k}(t) - Y_{n,k}(t^{(j)}) \r|
\\
\leq \mb E \sup_{\|(h_1,q_1) - (h_2,q_2)\|\leq \delta}  \frac{1}{\sqrt k}\l| \frac{\sqrt{Nn}}{\Delta_n}\sum_{i=1}^k g_j\l( \bar L_i(h_1,q_1) -\bar L_i(h_2,q_2) - (\wt L(h_1,q_1) - \wt L(h_2,q_2))\r) \r|.
}
To complete the proof, we will apply the multiplier inequality \citep[Lemma 2.9.1 in][]{wellner1} to deduce that the last display is bounded, up to a multiplicative constant, by
\[
\max_{m=1,\ldots,k} \mb E \sup_{\|(h_1,q_1) - (h_2,q_2)\|\leq \delta}  \frac{1}{\sqrt m}\l| \frac{\sqrt{Nn}}{\Delta_n}\sum_{i=1}^m \eps_j\l( \bar L_i(h_1,q_1) -\bar L_i(h_2,q_2) - (\wt L(h_1,q_1) - \wt L(h_2,q_2))\r) \r|
\]
where $\eps_1,\ldots,\eps_k$ are i.i.d. Rademacher random variables. Next, desymmetrization inequality \citep[Lemma 2.3.6 in][]{wellner1} implies that for any $m=1,\ldots,k$, 
\ml{
\mb E \sup_{\|(h_1,q_1) - (h_2,q_2)\|\leq \delta}  \frac{1}{\sqrt m}\l| \frac{\sqrt{Nn}}{\Delta_n}\sum_{i=1}^m \eps_j\l( \bar L_i(h_1,q_1) -\bar L_i(h_2,q_2) - (\wt L(h_1,q_1) - \wt L(h_2,q_2))\r) \r|
\\
\leq 2\mb E \sup_{\|(h_1,q_1) - (h_2,q_2)\|\leq \delta}  \frac{1}{\sqrt mn }\l| \frac{\sqrt{N}}{\Delta_n}\sum_{i=1}^{mn} 
\l( \wt\ell_N(h_1,q_1,X_i) -  \wt\ell_N(h_2,q_2,X_i) - (\wt L(h_1,q_1) - \wt L(h_2,q_2))\r) \r|
}
where $\wt\ell_N(h,q,X)$ and $\wt L(h,q)$ were defined in \eqref{eq:new-class} and \eqref{eq:notation} respectively. 
It remains to apply Lemma \ref{lemma:sup-power} in exactly the same way as before (see \eqref{eq:final}) to deduce that the last display is bounded from above by $C \sqrt{d} \mb E^{1/2} \m V^2(X; r(\theta_0))\cdot \delta\to 0$ as $\delta\to 0$. 
This completes the proof of asymptotic equicontinuity, and therefore weak convergence, of the sequence of processes $M_N(h,q)$.


\subsection{Proof of Lemma \ref{lemma:unif}.}
\label{sec:proof-unif}

Define
\[
G_k(z;\theta) = \frac{1}{\sqrt{k}}\sum_{j=1}^k \rho'\l(\sqrt{n}\,\frac{\bL_j (\theta) - \bL_j(\theta_0) - L(\theta,\theta_0) - z}{\Delta_n}\r),
\]
and recall that the contaminated sample $X_1,\ldots,X_N$ contains $\m O$ outliers; let $I\subset \{1,\ldots,N\}$ denote the index set of the outliers. Moreover, let $\tilde X_1,\ldots,\tilde X_N$ be an i.i.d. sample from $P$ such that $\tilde X_j \equiv X_j$ for $j\notin I$, and let $\tilde G_k(z;\theta)$ be a version of $G_k(z;\theta)$ based on the uncontaminated sample. Clearly, 
$\l| G_k(z;\theta) - \tilde G_k(z;\theta)\r|\leq 2\|\rho\|_\infty\frac{\m O}{\sqrt k}$ almost surely, for all $z\in \mb R$. 
  
Suppose that $z_1,z_2\in \mb R$ are such that on an event of probability close to $1$, $G_k(z_1;\theta) > 0$ and $G_k(z_2;\theta) < 0$ for all $\theta\in \Theta$ simultaneously. 
Since $G_k$ is non-increasing in $z$, it is easy to see that on this event, $\hL(\theta,\theta_0)\in (z_1,z_2)$ for all $\theta\in \Theta$, implying that 
\begin{equation}
\label{eq:a11}
\sup_{\theta\in \Theta'}\l| \hL(\theta,\theta_0) - L(\theta,\theta_0) \r|\leq \max(|z_1|,|z_2|).
\end{equation} 
Our goal is to find $z_1,z_2$ satisfying conditions above and such that $|z_1|,\, |z_2|$ are as small as possible. 
Let $W(\theta)$ stand for a centered normally distributed random variable with variance $\sigma^2(\theta,\theta_0)$, and observe that 
\[
G_k(z;\theta) = A_0+A_1 + A_2 + A_3,
\]
where 
\begin{align*}
A_0(\theta) &= G_k(z;\theta) - \tilde G_k(z;\theta), 
\\
A_1(\theta) &= \frac{1}{\sqrt{k}}\sum_{j=1}^k \l( \rho'\l(\sqrt{n}\,\frac{\bL_j (\theta) - \bL_j(\theta_0) - L(\theta,\theta_0) - z}{\Delta_n}\r) - \mb E \rho'\l(\sqrt{n}\,\frac{\bL_j (\theta) - \bL_j(\theta_0) - L(\theta,\theta_0) - z}{\Delta_n}\r) \r),
\\
A_2(\theta) & = \sqrt{k}\l( \mb E \rho'\l(\sqrt{n}\,\frac{\bL_1 (\theta) - \bL_1(\theta_0) - L(\theta,\theta_0) - z}{\Delta_n}\r) - \mb E \rho'\l( \frac{W(\theta) - \sqrt{n} z }{\Delta_n} \r)\r), 
\\
A_3(\theta) & = \sqrt{k} \mb E\rho'\l( \frac{W(\theta) - \sqrt{n} z }{\Delta_n} \r).
\end{align*}
With some abuse of notation, we assume that $A_1(\theta)$ is based on the sample $\tilde X_1,\ldots,\tilde X_N$. Next, suppose that $\eps_1,\eps_2$ are positive and such that $\inf_{\theta\in \Theta'} A_0(\theta)> -\eps_0$, $\inf_{\theta\in \Theta'} A_1(\theta)> -\eps_1$ with high probability and $\inf_{\theta\in \Theta'} A_2(\theta)> -\eps_2$. Then $z_1$ satisfying $\inf_{\theta\in \Theta'} \mb E\rho'\l( \frac{W(\theta) - \sqrt{n} z_1 }{\Delta_n} \r)\geq\frac{\eps_0+\eps_1+\eps_2}{\sqrt k}$ will conform to our requirements. Since $\mb E\rho'\l( \frac{W(\theta) - \sqrt{n} z_1 }{\Delta_n} \r) \approx \underbrace{\mb E\rho'\l( \frac{W(\theta) }{\Delta_n} \r)}_{=0} - \mb E\rho''\l( \frac{W(\theta)}{\Delta_n} \r)\frac{\sqrt{n}z_1}{\Delta_n}$ for small $z_1$, a natural choice is $z_1\approx \frac{\Delta_n}{\inf_{\theta\in\Theta'}\mb E\rho''\l( \frac{W(\theta)}{\Delta_n} \r)} \frac{\eps_0+\eps_1+\eps_2}{\sqrt{nk}}$. 
This argument is made precise in \citep[][Lemma 4.3]{minsker2018uniform} which shows that the choice
\[
z_1 = -\frac{\eps_0+\eps_1+\eps_2}{ 0.09}\frac{\widetilde \Delta}{\sqrt{nk}}
\]
is sufficient whenever $\eps_j,\ j=0,1,2$ are not too large (specifically, when $\frac{\eps_0+\eps_1+\eps_2}{\sqrt k}\leq 0.045$ - this is precisely the main condition needed for the bound of lemma to hold). 
It remains to provide the values for $\eps_j, \ j=0,1,2$. We have already shown above that $\eps_0$ can be chosen as $\eps_0=2\|\rho\|_\infty\frac{\m O}{\sqrt k}$. To find a feasible value of $\eps_1$, we will apply Markov's inequality stating that with probability at least $1-1/s$,
\[
\sup_{\theta\in\Theta'}|A_1(\theta)| \leq s \,\mb E \sup_{\theta \in \Theta'} \l| \frac{1}{\sqrt k}\sum_{j=1}^k \rho'\l( \frac{\sqrt{n}}{\Delta_n}\l(\bar L_j(\theta) - L(\theta)\r) \r) - \mb E \rho' \l( \frac{\sqrt{n}}{\Delta_n}\l(\bar L_1(\theta) - L(\theta)\r) \r) \r|.
\]
The expected supremum can be estimated in a standard way using the symmetrization, contraction and desymmetrization inequalities (e.g. see the proof of proposition \ref{lemma:denom} below), yielding that 
\begin{multline*}
\mb E \sup_{\theta \in \Theta'} \l| \frac{1}{\sqrt k}\sum_{j=1}^k \rho'\l( \frac{\sqrt{n}}{\Delta_n}\l(\bar L_j(\theta) - L(\theta)\r) \r) - \mb E \rho' \l( \frac{\sqrt{n}}{\Delta_n}\l(\bar L_1(\theta) - L(\theta)\r) \r) \r|
\\
\leq \frac{8L(\rho')}{\Delta_n} \mb E\sup_{\theta\in \Theta'} \frac{1}{\sqrt{N}}\l| \sum_{j=1}^N \l(\ell(\theta,X_j) - \ell(\theta_0,X_j) - L(\theta,\theta_0)\r) \r|. 
\end{multline*}
It remains to obtain an appropriate value for $\eps_2$. Note that for any bounded non-negative function $g:\mb R\mapsto \mb R_+$ and any signed measure $Q$, 
\begin{align*}
\l| \int_{\mb R} g(x)dQ \r| = \l| \int_{0}^{\|f\|_\infty} Q\l( x: \,g(x)\geq t \r) dt \r|
\leq \|g\|_\infty \max_{t\geq 0} \l|  Q\l( x: \,g(x)\geq t \r)\r|.
\end{align*} 
Moreover, if $g$ is monotone, the sets $\{x: \,g(x)\geq t\}$ and $\{x: \,g(x)\leq t\}$ are half-intervals. 
Note that $\rho'= \max(\rho',0) - \max(-\rho', 0)$ is a difference of two non-negative monotone functions. Therefore,
\[
\l| \int_{\mb R} \rho'\l(\frac{x-\sqrt{n}z}{\Delta_n}\r)dQ(x) \r|\leq \|\rho'\|_\infty \l( \max_{t\geq 0} \l|  Q\l( x: \,\rho'(x)\geq t \r)\r| + \max_{t\leq 0} \l|  Q\l( x: \,\rho'(x)\leq t \r)\r|\r).
\] 
Take $Q$ to be the difference of the distributions of $\sqrt{n}\,\l(\bL_1 (\theta) - \bL_1(\theta_0) - L(\theta,\theta_0)\r)$ and $W(\theta)$, denoted $\Phi_\theta^{(n,k)}$ and $\Phi_\theta$ respectively, so that 
\begin{multline*}
\sqrt{k}\l( \mb E \rho'\l(\sqrt{n}\,\frac{\bL_1 (\theta) - \bL_1(\theta_0) - L(\theta,\theta_0) - z}{\Delta_n}\r) - \mb E \rho'\l( \frac{W(\theta) - \sqrt{n} z }{\Delta_n} \r)\r) 
\\
\leq 
2\sqrt k \|\rho'\|_\infty  \sup_{t\in \mb R} \l| \Phi^{(n,k)}_\theta(t) - \Phi_\theta(t) \r|. 
\end{multline*}
A well-known result by \citet{feller1968berry} states that $\sup_{t\in \mb R} \l| \Phi^{(n,k)}_\theta(t) - \Phi_\theta(t) \r| \leq 6g_\theta(n)$, where 
\[
g_\theta(n):= \frac{1}{\sqrt n} \mb E\l[ \l( \frac{\ell(\theta,X) - \ell(\theta_0,X) - L(\theta,\theta_0)}{\sigma(\theta,\theta_0)}\r)^2 \min\l(\l| \frac{\ell(\theta,X) - \ell(\theta_0,X) - L(\theta,\theta_0)}{\sigma(\theta,\theta_0)} \r|, \sqrt n\r) \r],
\]
It is easy to see that $g_\theta(n)\to 0$ as $n\to\infty$ if $\var(\ell(\theta,X))<\infty$, and  distributions with finite variance, and moreover $g_\theta(n)\leq C \mb E\l|  \frac{\ell(\theta,X) - \ell(\theta_0,X) - L(\theta,\theta_0)}{\sigma(\theta,\theta_0)} \r|^\tau n^{-\tau/2}$ if $\mb E\l|  \frac{\ell(\theta,X) - \ell(\theta_0,X) - L(\theta,\theta_0)}{\sigma(\theta,\theta_0)} \r|^{2+\tau}<\infty$ for some $\tau\in(0,1]$. 
Therefore, the function $g_{\tau}(n,\theta)$ in the statement of the lemma can be chosen as $g_{\tau}(n,\theta) = g_\theta(n)$ when $\tau=0$ and $g_{\tau}(n,\theta) = C$ when $\tau>0$. 
We conclude that the choice $\eps_2 = 12\sqrt k \|\rho'\|_\infty \sup_{\theta\in\Theta'} g_\theta(n)$ satisfies the desired requirements. 

It remains to recall the bound \eqref{eq:a11} and that $z_1 = -\frac{\eps_0+\eps_1+\eps_2}{ 0.09}\frac{\widetilde \Delta}{\sqrt{nk}}$. The matching bound for $z_2$ is obtained in an identical fashion.

\begin{remark}
The bound for $\eps_2$ that we established above is slightly weaker than the one used in the statement of the lemma; an improved version can be obtained using the non-uniform version of the Berry-Esseen bound with additional effort, and we refer the reader to \citep[][Lemma 4.2]{minsker2018uniform} for the technical details.
\end{remark}

The following proposition is one of the key technical results that the proof of Theorem \ref{th:normality-B} relies on.
\begin{proposition}
\label{lemma:b-k}
Let $\m L=\{ \ell(\theta,\cdot), \ \theta\in \Theta\}$ be a class of functions, and, given $\theta_0\in \Theta$, set 
$\sigma^2(\delta):=\sup_{\|\theta - \theta_0\|\leq \delta} \var\l( \ell(\theta,X)-\ell(\theta_0,X)\r)$. Moreover, let assumption \ref{ass:3} hold. Then for every $\delta \leq r(\theta_0)$ \footnote{$r(\theta_0)$ was defined in the paragraph following assumption \ref{ass:3}}, the following representation holds uniformly over $\|\theta-\theta_0\|\leq \delta $: 
\mln{
\label{eq:b-k}
\sqrt{N}\l(\hL(\theta,\theta_0) - L(\theta,\theta_0)\r) 
\\
=
\frac{\Delta_n}{\mb E \rho'' \l( \frac{\sqrt{n}}{\Delta_n}\l(\bar L_1(\theta,\theta_0) - L(\theta,\theta_0)\r) \r)}
\frac{1}{\sqrt{k}}\sum_{j=1}^k \rho' \l( \frac{\sqrt{n}}{\Delta_n}\l(\bar L_j(\theta,\theta_0) - L(\theta,\theta_0) \r) \r) 
+ \m R_{n,k,j}(\theta),
}
where 
\[
\sup_{\|\theta-\theta_0\|\leq \delta }\l| \m R_{n,k,j}(\theta)\r|\leq C(d,\theta_0)\l( \delta^2 \frac{s^2}{\sqrt{k}}  + 
s\delta^3 +  \sqrt{k} \delta^6\r)
\]
with probability at least $1-\frac{3}{s}$.
\end{proposition}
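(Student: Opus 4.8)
The plan is to reduce the statement, separately for each $\theta$ in the ball $\{\|\theta-\theta_0\|\leq\delta\}$, to a one–dimensional $M$–estimation problem and to Taylor–expand the associated estimating equation to second order, then to control every resulting term uniformly in $\theta$ by means of Lemma~\ref{lemma:unif} and Proposition~\ref{lemma:denom}. Concretely, I would put $\xi_j(\theta):=\frac{\sqrt n}{\Delta_n}\l(\bar L_j(\theta)-L(\theta)\r)$ and $u(\theta):=\frac{\sqrt n}{\Delta_n}\l(\hL(\theta)-L(\theta)\r)$, so that $\xi_1(\theta),\dots,\xi_k(\theta)$ are i.i.d., centered, with $\var(\xi_1(\theta))=\var(\ell(\theta,X))/\Delta_n^2\leq\sigma^2(\delta)/\Delta_n^2$, and $\sqrt N(\hL(\theta)-L(\theta))=\Delta_n\sqrt k\,u(\theta)$. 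By convexity and coercivity of $z\mapsto\sum_j\rho\l(\frac{\sqrt n}{\Delta_n}(\bar L_j(\theta)-z)\r)$ the point $u(\theta)$ is well defined and satisfies the first–order condition $\frac1k\sum_{j=1}^k\rho'(\xi_j(\theta)-u(\theta))=0$. Writing $\overline{\rho^{(m)}}(\theta):=\frac1k\sum_j\rho^{(m)}(\xi_j(\theta))$ and $m_{\rho^{(m)}}(\theta):=\mb E\rho^{(m)}(\xi_1(\theta))$, the claim is exactly an estimate for $\Delta_n\sqrt k\l(u(\theta)-\overline{\rho'}(\theta)/m_{\rho''}(\theta)\r)$.

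The first substantive step is an \emph{a priori bound on $u$}. Applying Lemma~\ref{lemma:unif} with $\Theta'=\{\|\theta-\theta_0\|\leq\delta\}$ (and bounding $\mb E\sup_{\|\theta-\theta_0\|\leq\delta}\l|\frac1N\sum_j(\ell(\theta,X_j)-L(\theta))\r|\leq C(d,\theta_0)\frac{\delta\vee\sigma(\delta)}{\sqrt N}$ via assumption~\ref{ass:3}, Lemma~\ref{lemma:sup-power} and $\sqrt{\var(\ell(\theta_0,X))}\leq\sigma(\delta)$), I would obtain, on an event of probability $\geq1-\frac1s$, that $\sup_{\|\theta-\theta_0\|\leq\delta}|u(\theta)|\leq\bar u:=C(d,\theta_0)\l(\frac{s(\delta\vee\sigma(\delta))}{\sqrt k}+\l(\sigma^2(\delta)\wedge n^{-\tau/2}\r)\r)$. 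A recurring device here is that each of the small ``bias'' coefficients admits \emph{two} bounds: using $\rho'(x)=x$ on $[-1,1]$, $|\rho'|\leq\|\rho'\|_\infty$ and $\mb E\xi_1(\theta)=0$ one gets $|m_{\rho'}(\theta)|\lesssim\var(\xi_1(\theta))\lesssim\sigma^2(\delta)$, while Lemma~\ref{lemma:lindeberg} combined with $\mb E\rho'(Z)=0$ for centered Gaussian $Z$ ($\rho'$ odd) gives $|m_{\rho'}(\theta)|\lesssim n^{-\tau/2}$; the same pair of estimates applies to $m_{\rho'''}(\theta)$ (with $\rho'''$ odd and supported on $1\leq|x|\leq2$), and gives $|m_{\rho''}(\theta)-1|\lesssim\sigma^2(\delta)\wedge n^{-\tau/2}$ (since $\rho''\equiv1$ on $[-1,1]$ and $\rho''$ is even). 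Hence for $\delta$ and $1/n$ small enough, $\inf_{\|\theta-\theta_0\|\leq\delta}m_{\rho''}(\theta)\geq\tfrac12$.

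Next I would Taylor–expand the estimating equation. Since $\rho'\in C^4$ with $\rho'''$, $\rho^{(4)}$ bounded and $\rho'''$ Lipschitz by assumption~\ref{ass:1}(iii), Lagrange's formula gives $\rho'(\xi_j-u)=\rho'(\xi_j)-\rho''(\xi_j)u+\tfrac12\rho'''(\xi_j)u^2-\tfrac16\rho^{(4)}(\widehat\xi_j)u^3$ with $\widehat\xi_j$ between $\xi_j-u$ and $\xi_j$; averaging, using the first–order condition, and dividing by $\overline{\rho''}(\theta)$ yields $u=\frac{\overline{\rho'}(\theta)}{\overline{\rho''}(\theta)}+\frac{1}{\overline{\rho''}(\theta)}\l(\tfrac{u^2}{2}\overline{\rho'''}(\theta)-\tfrac{u^3}{6k}\sum_j\rho^{(4)}(\widehat\xi_j)\r)$. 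Writing $\frac{\overline{\rho'}}{\overline{\rho''}}=\frac{\overline{\rho'}}{m_{\rho''}}\l(1+\frac{m_{\rho''}-\overline{\rho''}}{\overline{\rho''}}\r)$ and multiplying through by $\Delta_n\sqrt k$, the remainder $\m R_{n,k,j}(\theta)$ becomes the sum of (i) a denominator–replacement term $\Delta_n\sqrt k\,\frac{\overline{\rho'}}{m_{\rho''}}\cdot\frac{m_{\rho''}-\overline{\rho''}}{\overline{\rho''}}$, (ii) a quadratic term $\asymp\Delta_n\sqrt k\,u^2\,\overline{\rho'''}/\overline{\rho''}$, and (iii) a cubic term $\asymp\Delta_n\sqrt k\,|u|^3\|\rho^{(4)}\|_\infty/\overline{\rho''}$. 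For (i) I would invoke Proposition~\ref{lemma:denom}, which on an event of probability $\geq1-\frac1s$ gives $\sup_\theta|\overline{\rho''}(\theta)-m_{\rho''}(\theta)|\leq C(d,\theta_0)\frac{s(\delta\vee\sigma(\delta))}{\Delta_n\sqrt k}$ (so in particular $\overline{\rho''}\geq\tfrac14$ there), together with $|\overline{\rho'}(\theta)|\leq|m_{\rho'}(\theta)|+|\overline{\rho'}(\theta)-m_{\rho'}(\theta)|\lesssim(\sigma^2(\delta)\wedge n^{-\tau/2})+\frac{s}{\sqrt k}\l(\sigma(\delta)/\Delta_n\wedge\|\rho'\|_\infty\r)$ (Chebyshev for the fluctuation); this produces the $s(\sigma^2(\delta)\wedge n^{-\tau/2})(\delta\vee\sigma(\delta))$ and $(\delta\vee\sigma(\delta))^2s^2/\sqrt k$ contributions. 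For (ii) I would use Proposition~\ref{lemma:denom} in its $\rho'''$ form together with the bias bound on $m_{\rho'''}$ to get $|\overline{\rho'''}(\theta)|\leq C(d,\theta_0)\l((\sigma^2(\delta)\wedge n^{-\tau/2})+\frac{s(\delta\vee\sigma(\delta))}{\Delta_n\sqrt k}\r)$, and the a priori bound $|u|\leq\bar u$; and for (iii) just $|u|\leq\bar u$. Expanding $\bar u^2$, $\bar u^3$ and $\bar u^2\times(\dots)$ and using $\bar u\leq C(d,\theta_0)$ to absorb the lower–order cross products, the three groups collapse precisely to $(\delta\vee\sigma(\delta))^2s^2/\sqrt k$, $s(\sigma^2(\delta)\wedge n^{-\tau/2})(\delta\vee\sigma(\delta))$ and $\sqrt k(\sigma^2(\delta)\wedge n^{-\tau/2})^3$. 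Since all of the above estimates (those from Lemma~\ref{lemma:unif}, from Proposition~\ref{lemma:denom} for $\rho''$, and from Proposition~\ref{lemma:denom} for $\rho'''$, with the Chebyshev step folded into the first) are uniform in $\|\theta-\theta_0\|\leq\delta$, a union bound gives the stated inequality on an event of probability $\geq1-\frac3s$; when $\sigma^2(\delta)=\infty$ the bound is vacuous, and when $\E|\ell(\theta,X)-L(\theta)|^{2+\tau}=\infty$ one simply drops the $n^{-\tau/2}$ branch of each $\wedge$.

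The step I expect to be the main obstacle is the bookkeeping in the collection of terms: each of (i)--(iii) is a product of several ``bias $+$ fluctuation'' factors, and one must verify that after fully expanding these products and invoking the crude bound $\bar u=O(1)$ every cross term is dominated by one of the three displayed summands. The delicate point is that the quadratic term (ii) must yield the \emph{cubic} quantity $\sqrt k(\sigma^2(\delta)\wedge n^{-\tau/2})^3$ rather than a square — this is what forces the small–coefficient bound on $\overline{\rho'''}$ (equivalently on $m_{\rho'''}$) to be proved through \emph{both} the second–moment estimate and the Gaussian–comparison estimate of Lemma~\ref{lemma:lindeberg}, combined via the $\wedge$. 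A secondary difficulty is to make the a priori bound of the second paragraph carry the sharp dependence on $\delta\vee\sigma(\delta)$ and on $\sigma^2(\delta)\wedge n^{-\tau/2}$, rather than a mere $o_P(1)$ statement, since that sharpness is exactly what is needed when this proposition is fed into the rate–of–convergence arguments in the proofs of Theorems~\ref{th:normality-B} and~\ref{th:normality}.
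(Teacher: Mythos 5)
Your proposal is correct and follows essentially the same route as the paper's proof. The paper phrases the linearization in terms of the Z-estimating function $\wh G_k(z;\theta)=\frac1k\sum_j\rho'\bigl(\frac{\sqrt n}{\Delta_n}(\bar L_j(\theta)-L(\theta)-z)\bigr)$ and the remainder $R_k(\theta)=\wh G_k(0;\theta)+\pd_z G_k(z;\theta)|_{z=0}\,\wh e_N(\theta)$, expands to second order with a Lagrange $\rho'''$ intermediate term and then isolates the Lipschitz excess of $\rho'''$, whereas you expand $u\mapsto\rho'(\xi_j-u)$ to third order with a $\rho^{(4)}$ Lagrange remainder and then solve for $u$ — these are the same computation, organized with different bookkeeping, and both yield the three groups (i) denominator replacement $\overline{\rho''}\leftrightarrow m_{\rho''}$, (ii) a quadratic-in-$u$ term with $\overline{\rho'''}$, and (iii) a cubic-in-$u$ residue. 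The controlling ingredients are identical: Lemma~\ref{lemma:unif} together with Lemma~\ref{lemma:sup-power} for the a priori bound on $u$, Proposition~\ref{lemma:denom} (in both its $\rho''$ and $\rho'''$ forms) for the fluctuations, Lemma~\ref{lemma:lindeberg} and the odd/even structure of $\rho'$, $\rho''$, $\rho'''$ for the biases, and the $\wedge$ between the second-moment and Gaussian-comparison bounds, exactly as you note; the union bound over the three events gives $1-3/s$, also as in the paper.
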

\begin{proof}
First, observe that in view of assumption \ref{ass:3},
\begin{equation*}
\sigma^2(\delta)\leq \sup_{\|\theta - \theta_0\|\leq \delta}\mb E\lvert \ell(\theta,X) - \ell(\theta_0,X)\rvert^2 
\leq \mb E\m V^2(X;r(\theta_0))\, \delta^2.
\end{equation*}
Next, define 
\[
\wh G_k(z;\theta):=\frac{1}{k} \sum_{j=1}^k \rho' \l( \frac{\sqrt{n}}{\Delta_n}\l(\bar L_j(\theta,\theta_0) - L(\theta,\theta_0) - z \r) \r) 
\]
so that $\wh G_k(\hL(\theta,\theta_0) - L(\theta,\theta_0);\theta) = 0$, and let $G_k(z;\theta):=\mb E \wh G_k(z;\theta)$. 
Next, consider the stochastic process
\[
R_k(\theta) = \wh G_k(0;\theta) + \pd_z G_k(z;\theta)\big|_{z=0} \l(\hL(\theta,\theta_0) - L(\theta,\theta_0) \r).
\]
We claim that for any $\theta\in \Theta$, 
\ben
\label{eq:R}
\sqrt{N}\frac{R_k(\theta')}{ \pd_z G_k(z;\theta')|_{z=0}} = O_P\l(  \frac{\delta^2}{\sqrt{k}}  + 
\delta^3
 +  \sqrt{k} \delta^6 \r)
\een
uniformly over $\theta'$ in the neighborhood of $\theta$. 
Taking this claim for granted for now, we see that 
$\sqrt{N}\l(\hL(\theta,\theta_0) - L(\theta,\theta_0)\r) = - \sqrt{N} \frac{\wh G_k(0;\theta)}{ \pd_z G_k(z;\theta)|_{z=0}} + \sqrt{N}\frac{ R_k(\theta)}{ \pd_z G_k(z;\theta)|_{z=0}}$, 
and in particular it follows from the claim above that the weak limits of $\sqrt{N}(\hL(\theta,\theta_0) - L(\theta,\theta_0))$ and 
\[
 - \sqrt{N} \frac{\wh G_k(0;\theta)}{ \pd_z G_k(z;\theta)|_{z=0}} = \frac{\Delta_n}{\sqrt{k}} \, \frac{ \sum_{j=1}^k \rho' \l( \frac{\sqrt{n}}{\Delta_n}\l(\bar L_j(\theta,\theta_0) - L(\theta,\theta_0) \r) \r) }{\mb E \rho'' \l( \frac{\sqrt{n}}{\Delta_n}\l(\bar L_1(\theta,\theta_0) - L(\theta,\theta_0)\r) \r)}.
\]
coincide whenever $\delta$ sufficiently small (note that we can change the order of differentiation and expectation in the denominator as $\rho''$ is bounded). It remains to establish the relation \eqref{eq:R}. To this end, define $\wh e_N(\theta):= \hL(\theta,\theta_0) - L(\theta,\theta_0)$ so that $G_k(\wh e_N(\theta);\theta) = 0$, and let 
$G_k(z;\theta):=\mb E \wh G_k(z)$. 
Recall the definition of $R_k(\theta)$ and observe that the following identity is immediate: 
\[
R_k(\theta) = \underbrace{ \wh G_k\l(\wh e_N(\theta) ;\theta \r) }_{=0} + \pd_z G_k(z;\theta)\big|_{z=0} \wh e_N(\theta) - \l(  \wh G_k\l(\wh e_N(\theta) ; \theta \r) - \wh G_k(0;\theta) \r).
\]
For any $\theta\in \Theta$ and $j=1,\ldots,k$, there exists $\tau_j = \tau_j(\theta)\in [0,1]$ such that 
\ml{
\rho' \l( \frac{\sqrt{n}}{\Delta_n}\l(\bar L_j(\theta,\theta_0) - L(\theta,\theta_0) - \wh e_N(\theta) \r) \r) = 
\rho' \l( \frac{\sqrt{n}}{\Delta_n}\l(\bar L_j(\theta,\theta_0) - L(\theta,\theta_0)  \r) \r)
\\
- \frac{\sqrt{n}}{\Delta_n} \rho''\l(  \frac{\sqrt{n}}{\Delta_n}\l(\bar L_j(\theta,\theta_0) - L(\theta,\theta_0)  \r)\r)\cdot \wh e_N(\theta) 
\\
+ \frac{n}{\Delta_n^2} \rho''' \l( \frac{\sqrt{n}}{\Delta_n}\l(\bar L_j(\theta,\theta_0) - L(\theta,\theta_0) -\tau_j \wh e_N(\theta) \r) \r)\cdot \l( \wh e_N(\theta) \r)^2.
}
Therefore, 
\ml{
\wh G_k\l(\wh e_N(\theta) ; \theta \r) - \wh G_k(0;\theta) 
= - \frac{\sqrt{n}}{k\Delta_n} \sum_{j=1}^k \rho''\l(  \frac{\sqrt{n}}{\Delta_n}\l(\bar L_j(\theta,\theta_0) - L(\theta,\theta_0)  \r)\r)\cdot \wh e_N(\theta) 
\\
+ \frac{n}{k\Delta_n^2} \sum_{j=1}^k \rho''' \l( \frac{\sqrt{n}}{\Delta_n}\l(\bar L_j(\theta,\theta_0) - L(\theta,\theta_0)\r) \r)\cdot \l( \wh e_N(\theta) \r)^2
\\
+\frac{n}{k\Delta_n^2} \sum_{j=1}^k \l(\rho''' \l( \frac{\sqrt{n}}{\Delta_n}\l(\bar L_j(\theta,\theta_0) - L(\theta,\theta_0) -\tau_j \wh e_N(\theta) \r) \r) - \rho''' \l( \frac{\sqrt{n}}{\Delta_n}\l(\bar L_j(\theta,\theta_0) - L(\theta,\theta_0) \r) \r)\r)\cdot \l( \wh e_N(\theta) \r)^2
}
and
\mln{
\label{eq:R'}
R_k(\theta) =  \frac{\sqrt{n}}{\Delta_n} \frac{1}{k}\sum_{j=1}^k \l( \rho''\l( \frac{\sqrt{n}}{\Delta_n}\l(\bar L_j(\theta,\theta_0) - L(\theta,\theta_0)  \r)\r) - \mb E  \rho''\l(  \frac{\sqrt{n}}{\Delta_n}\l(\bar L_j(\theta,\theta_0) - L(\theta,\theta_0)  \r)\r) \r) \cdot \wh e_N(\theta)
\\
- \frac{n}{\Delta_n^2} \frac{1}{k}\sum_{j=1}^k \rho''' \l( \frac{\sqrt{n}}{\Delta_n}\l(\bar L_j(\theta,\theta_0) - L(\theta,\theta_0) \r) \r)\cdot \l( \wh e_N(\theta) \r)^2
\\
- \frac{n}{\Delta_n^2} \frac{1}{k}\sum_{j=1}^k \l(\rho''' \l( \frac{\sqrt{n}}{\Delta_n}\l(\bar L_j(\theta,\theta_0) - L(\theta,\theta_0) -\tau_j \wh e_N(\theta) \r) \r) \r.
\\
\l. - \rho''' \l( \frac{\sqrt{n}}{\Delta_n}\l(\bar L_j(\theta,\theta_0) - L(\theta,\theta_0) \r) \r)\r)\cdot \l( \wh e_N(\theta) \r)^2
= R'(\theta) + R''(\theta) + R'''(\theta).
}
It follows from Lemma \ref{lemma:unif} (with $\m O = 0$) and Lemma \ref{lemma:sup-power} that
\[
\sup_{ \| \theta-\theta_0 \| \leq \delta} \l| \wh e_N(\theta) \r| \leq C(d,\theta_0) \l( \frac{\delta}{\sqrt N} s +\frac{ \delta^2}{\sqrt n}\r)
\]
with probability at least $1-s^{-1}$ whenever $s\lesssim \sqrt k$. 
Moreover, Proposition \ref{lemma:denom} combined with Lemma \ref{lemma:sup-power} yields that 
\ml{
\sup_{\|\theta - \theta_0\|\leq \delta} \l|\frac{1}{k}\sum_{j=1}^k \l( \rho''\l(  \frac{\sqrt{n}}{\Delta_n}\l(\bar L_j(\theta,\theta_0) - L(\theta,\theta_0)  \r)\r) - \mb E  \rho''\l(  \frac{\sqrt{n}}{\Delta_n}\l(\bar L_j(\theta,\theta_0) - L(\theta,\theta_0)  \r)\r) \r)\r| 
\\
\leq C(d,\theta_0) \, \frac{\delta}{\sqrt k}s 
}
with probability at least $1-s^{-1}$. Therefore, the first term $R'(\theta)$ in \eqref{eq:R'} satisfies 
\[
\sup_{\|\theta - \theta_0\|\leq \delta} \l| R'(\theta) \r| \leq C(d,\theta_0)\l(  \frac{\delta^2}{k}s^2  + 
\frac{\delta^3}{\sqrt k}s\r)
\] 
on event $\m E$ of probability at least $1-\frac{2}{s}$. Observe that
\ml{
\sup_{\|\theta - \theta_0\|\leq \delta}\l|\frac{1}{k}\sum_{j=1}^k \l(\rho''' \l( \frac{\sqrt{n}}{\Delta_n}\l(\bar L_j(\theta,\theta_0) - L(\theta,\theta_0) \r) \r) - \mb E \rho''' \l( \frac{\sqrt{n}}{\Delta_n}\l(\bar L_j(\theta,\theta_0) - L(\theta,\theta_0) \r) \r)\r) \r| 
\\
\leq  C(d,\theta_0) \, \frac{\delta}{\sqrt k}s
}
with probability at least $1-s^{-1}$, again by Proposition \ref{lemma:denom}, and 
$\l|  \mb E \rho''' \l( \frac{\sqrt{n}}{\Delta_n}\l(\bar L_j(\theta,\theta_0) - L(\theta,\theta_0) \r) \r) \r| \leq C\delta^2$ by Lemma \ref{lemma:lindeberg}. Therefore, the term $R''(\theta)$ admits an upper bound 
\[
\sup_{\|\theta - \theta_0\|\leq \delta} \l| R''(\theta) \r| 
\leq C\l(  \frac{\delta^3}{k^{3/2}} s^3 + \sigma^6(\delta)\r)
\]
which holds with probability at least $1-s^{-1}$. 
Finally, as $\rho'''$  is Lipschitz continuous by assumption, the third term $R'''(\theta)$ can be estimated via
\[
\sup_{\|\theta - \theta_0\|\leq \delta} \l| R'''(\theta) \r| \leq C\frac{n}{\Delta_n^2} \l| \wh e_N(\theta)\r|^3 
\leq \frac{C}{\sqrt n}\l(  \frac{\delta^3}{k^{3/2}} s^3 + \sigma^6(\delta) \r)
\]
on event $\m E$ (note that this upper bound is smaller than the upper bound for $\sup_{\|\theta - \theta_0\|\leq \delta} \l| R''(\theta) \r|$ by the multiplicative factor of $\sqrt{n}$). 
Combining the estimates above and excluding all the higher order terms (taking into account the fact that $s\leq c(d,\theta_0)\sqrt{k}$), it is easy to conclude that 
\begin{equation*}
\sqrt{N}\sup_{\|\theta-\theta_0\|\leq \delta} \l| \frac{R_k(\theta)}{ \pd_z G_k(z;\theta)|_{z=0}} \r| 
\leq C(d,\theta_0)\Bigg( \delta^2 \frac{s^2}{\sqrt{k}}  + 
s\,\delta^3
 + \sqrt{k}\delta^6 \Bigg)
\end{equation*}
with probability at least $1-\frac{3}{s}$.
\end{proof}

\section*{Acknowledgements.}

The simulation results in this paper partially rely on the code created by Timoth\'{e}e Mathieu (\href{https://github.com/TimotheeMathieu/Excess-risk-bounds-in-robust-empirical-risk-minimization/}{https://github.com/TimotheeMathieu/Excess-risk-bounds-in-robust-empirical-risk-minimization/}), and the author wants to express his gratitude to Timoth\'{e}e for allowing him to use it.

\bibliography{MoM,robustERM} 
\bibliographystyle{apalike}

\appendix
\section{Auxiliary results.}
\label{sec:aux}

\subsection{Existence of solutions.}

In this section, we discuss simple sufficient conditions for existence of the estimator $\htheta_{n,k}$ defined in display \eqref{eq:M-est-erm2}. 
\begin{proposition}
Assume that $\Theta\subset \mb R^d$ is compact and that $\ell(\theta,x)$ is continuous with respect to the first variable for P-almost all $x$. Moreover, let $\rho$ be a convex function such that $\rho''(x)>0$ for all $x\in \mb R$. Then $\htheta_{n,k}$ exists.
\end{proposition}
\begin{proof}
It suffices to show that $\hL(\theta,\theta')$ is continuous. The existence claim then easily follows as $\hL(\theta,\theta')$ must be uniformly continuous on $\Theta\times \Theta$ due to compactness, which in turn implies, via a standard argument, continuity of the function $\theta\mapsto \max_{\theta'\in \Theta} \hL(\theta,\theta')$, hence the existence of $\htheta_{n,k}$, again in view of compactness. To establish the continuity of $\theta\mapsto \max_{\theta'\in \Theta} \hL(\theta,\theta')$ when $\hL(\theta,\theta')$ is uniformly continuous, note that for any $\theta\in \Theta$ and any $\eps>0$, $\hL(\theta,\theta') - \eps \leq \hL(\tilde\theta,\theta')\leq \hL(\theta,\theta')+\eps$ for all $\theta'\in\Theta$ as long as $\|\tilde\theta - \theta\|\leq \delta(\eps)$. It easily implies that 
$ \max_{\theta'\in \Theta} \hL(\theta,\theta')-\eps \leq \max_{\theta'\in \Theta} \hL(\tilde\theta,\theta') \leq \max_{\theta'\in \Theta} \hL(\theta,\theta')+\eps$, and the conclusion follows.

\noindent All that remains is to establish the continuity of $\hL(\theta,\theta')$. To this end, fix $\eps>0$ and let 
\[
R(z;\theta,\theta') =\frac{1}{k} \sum_{j=1}^k \rho\l(\sqrt{n}\,\frac{\bL_j (\theta) - \bL_j(\theta') - z}{\Delta_n}\r).
\] 
Since $R'(z;\theta,\theta')$ is strictly increasing in $z$, there exist $z_+(\eps)$ and $z_-(\eps)$ such that $R'(z_+(\eps);\theta,\theta') = \eps$ and $R'(z_-(\eps);\theta,\theta')=-\eps$. In particular, $\hL(\theta,\theta')\in (z_-(\eps),z_+(\eps))$. 
As $R''(\hL(\theta,\theta');\theta,\theta')>0$ in view of the assumption $\rho''>0$, $|z_+(\eps) - z_-(\eps)|\to 0$ as $\eps\to 0$. Since $\bL_j(\theta)-\bL_j(\theta')$ is continuous in $\theta,\theta'$ by assumption, $R$ is continuous in $\theta,\theta'$ as well, hence $\l|R(z_+(\eps);\tilde\theta,\tilde\theta') - R(z_+(\eps);\theta,\theta')\r|<\eps$ and $\l|R(z_-(\eps);\tilde\theta,\tilde\theta') - R(z_-(\eps);\theta,\theta')\r|<\eps$ whenever $\|(\theta,\theta') - (\tilde\theta,\tilde\theta')\|\leq \delta(\eps)$ for some $\delta(\eps)$ small enough. In this case, we see that the inequalities $R(z_+(\eps);\tilde\theta,\tilde\theta')>0$ and $R(z_-(\eps);\tilde\theta,\tilde\theta')<0$ hold, hence $\hL(\tilde\theta,\tilde\theta')\in (z_-(\eps),z_+(\eps))$, implying that $\l|\hL(\tilde\theta,\tilde\theta') -\hL(\theta,\theta')\r|\leq |z_+(\eps) - z_-(\eps)|\to 0$ as $\eps\to 0$, yielding the desired conclusion.
\end{proof}
We remark that elsewhere in this work, we choose $\rho$ with the second derivative vanishing outside of a neighborhood of $0$.  However, $R''(\hL(\theta,\theta');\theta,\theta')>0$ holds with high probability uniformly over $\theta,\theta'\in \Theta$ when 
$\Theta$ is compact and the class $\{\ell(\theta,\cdot), \ \theta\in\Theta\}$ satisfies assumptions of the paper. We sketch the steps needed to show this fact; all the required tools have already been established in the paper. First, note that in view of Lemma \ref{lemma:unif} and the triangle inequality, $\sup_{\theta,\theta'\in \Theta}\l| 
\hL(\theta,\theta') - L(\theta,\theta') \r| = O_P(n^{-1/2})$ as $n,k\to\infty$ with high probability, hence $\inf_{\theta,\theta'} R''(\hL(\theta,\theta');\theta,\theta')\geq \inf_{\theta,\theta',|z|\leq D/\sqrt{n}} R''(L(\theta,\theta')+z;\theta,\theta')$ for a large constant $D$, again with high probability. Next, the relation
\[
\frac{1}{n}\sup_{\theta,\theta',|z|\leq D/\sqrt{n}} \l| R''(L(\theta,\theta')+z;\theta,\theta') - \mb E R''(L(\theta,\theta')+z;\theta,\theta')\r| = o_P(1)
\] 
as $n,k\to\infty$ follows from an argument identical to the one used to prove Proposition \ref{lemma:denom} and Lemma \ref{lemma:sup-power}. Finally, $\mb E \rho''\l(\sqrt{n}\,\frac{\bL_j (\theta) - \bL_j(\theta') - L(\theta,\theta') - z}{\Delta_n}\r)= \mb E\rho''\l( \frac{Z(\theta,\theta') - z\sqrt{n}}{\Delta_n} \r) + o(1)$ in view of Lemma \ref{lemma:lindeberg}, where $Z(\theta,\theta')$ is a centered and normally distributed random variable with variance $\sigma^2(\theta,\theta')$. As $\rho''(x)\geq I\{|x|\leq 1\}$, we see that $\inf_{\theta,\theta',|z|\leq D/\sqrt{n}} \mb E\rho''\l( \frac{Z(\theta,\theta') - z\sqrt{n}}{\Delta_n} \r)>0$, yielding the result.


\subsection{Technical lemmas.}
\begin{lemma}
\label{lemma:lindeberg}
Let $F:\mb R\mapsto \mb R$ be a function such that $F''$ is bounded and Lipschitz continuous. Moreover, suppose that $\xi_1,\ldots,\xi_n$ are independent centered random variables such that $\mb E|\xi_j|^{2}<\infty$ for all $j$, and that $Z_j, \ j=1,\ldots,n$ are independent with normal distribution $N\l( 0, \var(\xi_j)\r)$. 
Then 
\[
\l|\mb E F\l( \sum_{j=1}^n \xi_j\r) - \mb E F\l( \sum_{j=1}^n Z_j\r)\r| \leq C(F) \sum_{j=1}^n \mb E \l[ \xi_j^2 \cdot\min(|\xi_j,1)\r].
\]
In particular, if $\mb E|\xi_j|^{2+\tau}<\infty$ for some $\tau\in (0,1]$ and all $j$, then 
\[
\l|\mb E F\l( \sum_{j=1}^n \xi_j\r) - \mb E F\l( \sum_{j=1}^n Z_j\r)\r| \leq C(F) \sum_{j=1}^n \mb E |\xi_j|^{2+\tau}.
\]
\end{lemma}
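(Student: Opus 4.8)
The plan is to run the classical Lindeberg telescoping (hybrid-sum) argument, using a second-order Taylor expansion of $F$ whose remainder is estimated in two complementary regimes so as to exploit \emph{both} the boundedness and the Lipschitz continuity of $F''$.

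First I would assume, without loss of generality, that $Z_1,\dots,Z_n$ are constructed on the same probability space, independent of $\xi_1,\dots,\xi_n$ and mutually independent, and introduce the hybrid sums $W_j := \sum_{i<j} Z_i + \sum_{i>j}\xi_i$ for $j=1,\dots,n$, so that $W_1+\xi_1 = \sum_{i=1}^n \xi_i$, $W_n + Z_n = \sum_{i=1}^n Z_i$, and $W_j + Z_j = W_{j+1} + \xi_{j+1}$ for $1\le j<n$. This gives the telescoping identity
\[
\mb E F\Big(\sum_{i=1}^n\xi_i\Big) - \mb E F\Big(\sum_{i=1}^n Z_i\Big) = \sum_{j=1}^n\Big( \mb E F(W_j + \xi_j) - \mb E F(W_j + Z_j)\Big),
\]
and the point is that $W_j$ is independent of the pair $(\xi_j,Z_j)$ for each $j$. (Finiteness of all the expectations below is immediate from $|F(w)|\le |F(0)|+|F'(0)|\,|w|+\tfrac12\|F''\|_\infty w^2$, $\mb E|\xi_j|^2<\infty$ and the assumed independence.)

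Next, for real $w,u$ I would write $F(w+u) - F(w) - F'(w)u - \tfrac12 F''(w)u^2 = \int_0^1 (1-t)\big(F''(w+tu)-F''(w)\big)u^2\,dt =: r(w,u)$, and bound the integrand by $2\|F''\|_\infty$ in one regime and by $L(F'')\,t|u|$ in the other to get $|r(w,u)| \le C(F)\,u^2\min(1,|u|)$. Applying this with $w=W_j$ and $u\in\{\xi_j,Z_j\}$, taking expectations, and using independence of $W_j$ from $(\xi_j,Z_j)$ together with $\mb E\xi_j=0$ and $\var(Z_j)=\var(\xi_j)$, the three deterministic-in-$W_j$ terms ($\mb E F(W_j)$, $\mb E[F'(W_j)]\,\mb E u$, $\tfrac12\mb E[F''(W_j)]\,\mb E u^2$) cancel between the $\xi_j$- and $Z_j$-contributions, leaving
\[
\big|\mb E F(W_j+\xi_j) - \mb E F(W_j+Z_j)\big| \le C(F)\Big(\mb E[\xi_j^2\min(1,|\xi_j|)] + \mb E[Z_j^2\min(1,|Z_j|)]\Big).
\]
To finish, I would bound the Gaussian remainder by the $\xi$-side quantity: with $\sigma_j^2 = \var(\xi_j)=\var(Z_j)$ one has $\mb E[Z_j^2\min(1,|Z_j|)] \le \mb E Z_j^2 \wedge \mb E|Z_j|^3 \le C(\sigma_j^2\wedge\sigma_j^3)$, and a short case analysis — according to whether $\mb E[\xi_j^2 I\{|\xi_j|>1\}]$ or $\mb E[\xi_j^2 I\{|\xi_j|\le 1\}]$ carries at least half of $\sigma_j^2$, using H\"older ($\mb E[\xi_j^2 I\{|\xi_j|\le1\}]\le (\mb E[|\xi_j|^3 I\{|\xi_j|\le1\}])^{2/3}$) in the second case — shows $\sigma_j^2\wedge\sigma_j^3 \le C\,\mb E[\xi_j^2\min(1,|\xi_j|)]$. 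Summing over $j$ yields the first bound. The second bound follows at once from $\min(1,|x|)\le |x|^\tau$ for all $x$ and $\tau\in(0,1]$, whence $\xi_j^2\min(1,|\xi_j|)\le|\xi_j|^{2+\tau}$.

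The argument is essentially standard and I do not anticipate a serious obstacle. The one point requiring slight care is the very last step — controlling the \emph{Gaussian} contribution $\mb E[Z_j^2\min(1,|Z_j|)]$ by $\mb E[\xi_j^2\min(1,|\xi_j|)]$, rather than the other way around — because the function $x\mapsto x^2\min(1,|x|)$ has a kink at $|x|=1$ and is not convex, so Jensen does not apply directly and one argues instead by splitting on where the second moment of $\xi_j$ concentrates.
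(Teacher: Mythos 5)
Your proposal is correct and follows essentially the same path as the paper: the classical Lindeberg hybrid-sum telescoping, a second-order Taylor expansion at each swap, cancellation of the zeroth/first/second-order terms by independence and moment-matching, and a remainder bound $|F''(w+u)-F''(w)|\leq C(F)\min(1,|u|)$ that exploits both boundedness and Lipschitz continuity of $F''$. In fact you are slightly more careful than the paper at the one delicate point: the paper passes directly from the two remainder terms to the bound $C(F)\,\mb E[\xi_j^2\min(1,|\xi_j|)]$ without spelling out why the Gaussian contribution $\mb E[Z_j^2\min(1,|Z_j|)]$ is dominated by the $\xi$-side quantity, whereas your case split on where $\sigma_j^2$ concentrates, together with the H\"older estimate, closes that gap explicitly and correctly.
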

\begin{proof}
We will apply the standard Lindeberg's replacement method \citep[see e.g.][chapter 11]{o2014analysis}. 
For $1\leq j\leq n+1$, define $T_j:=F\l( \sum_{i=1}^{j-1} \xi_i + \sum_{i=j}^n Z_j\r)$. Then 
\[
\l| \mb E F\l( \sum_{j=1}^n \xi_j\r) - \mb E F\l( \sum_{j=1}^n Z_j\r) \r| =\l| \mb E T_{n+1} - \mb ET_1\r|
\leq \sum_{j=1}^{n} |\mb ET_{j+1} - \mb ET_{j}|.
\]
Moreover, Taylor's expansion formula gives that there exists (random) $\mu \in [0,1]$ such that
\ml{
T_{j+1} = F\l( \sum_{i=1}^{j-1} \xi_i + \sum_{i=j+1}^n Z_j\r) + F'\l( \sum_{i=1}^{j-1} \xi_i + \sum_{i=j+1}^n Z_j\r)\xi_j 
+ F''\l( \sum_{i=1}^{j-1} \xi_i + \sum_{i=j+1}^n Z_j\r) \frac{\xi_j^2}{2} 
\\
+ \l( F''\l( \sum_{i=1}^{j-1} \xi_i + \sum_{i=j+1}^n Z_j + \mu \xi_j\r) -  F''\l( \sum_{i=1}^{j-1} \xi_i + \sum_{i=j+1}^n Z_j\r) \r)\frac{\xi_j^2}{2}.
}
Similarly, 
\ml{
T_{j} = F\l( \sum_{i=1}^{j-1} \xi_i + \sum_{i=j+1}^n Z_j\r) + F'\l( \sum_{i=1}^{j-1} \xi_i + \sum_{i=j+1}^n Z_j\r)Z_j 
+ F''\l( \sum_{i=1}^{j-1} \xi_i + \sum_{i=j+1}^n Z_j\r) \frac{Z_j^2}{2} 
\\
+ \l( F''\l( \sum_{i=1}^{j-1} \xi_i + \sum_{i=j+1}^n Z_j + \mu' Z_j\r) -  F''\l( \sum_{i=1}^{j-1} \xi_i + \sum_{i=j+1}^n Z_j\r) \r)\frac{Z_j^2}{2}.
}
Lipschitz continuity and boundedness of $F''$ imply that $\l| F''(x) - F''(y) \r|\leq C(F) \min(1, |x-y|)$ with 
$C(F)=\max\l(2\|F\|_\infty,L(F'')\r)$. Therefore, 
\ml{
\l| \mb ET_{j+1} - \mb E T_j \r| \leq \l| \mb E \l( F''\l( \sum_{i=1}^{j-1} \xi_i + \sum_{i=j+1}^n Z_j + \mu \xi_j\r) -  F''\l( \sum_{i=1}^{j-1} \xi_i + \sum_{i=j+1}^n Z_j\r) \r)\frac{\xi_j^2}{2}\r|
\\
+ \l| \mb E \l( F''\l( \sum_{i=1}^{j-1} \xi_i + \sum_{i=j+1}^n Z_j + \mu' Z_j\r) -  F''\l( \sum_{i=1}^{j-1} \xi_i + \sum_{i=j+1}^n Z_j\r) \r)\frac{Z_j^2}{2}\r| 
\\
\leq C_1(F)\mb E\l[ \xi_j^2 \min(|\xi_j|,1) \r],
}
and the first claim follows. 
To establish the second inequality, it suffices to observe that for all $j$,
$\mb E\l[ \xi_j^2 \min(|\xi_j|,1) \r] = \mb E |\xi_j|^3 I\{|\xi_j|\leq1 \} + \mb E |\xi_j|^2 I\{|\xi_j|> 1\}$. 
Clearly, $|\xi_j|^3\leq  |\xi_j|^{2+\tau}$ on the event $\{|\xi_j|\leq 1\}$, whereas $|\xi_j|^2\leq  |\xi_j|^{2+\tau}$ on the event $\{|\xi_j|>1\}$. 
\end{proof}

\begin{lemma}
\label{lemma:sup-power}
Let $\m F = \l\{f_\theta, \ \theta\in \Theta'\subseteq \mb R^d\r\}$ be a class of functions that is Lipschitz in parameter, meaning that 
$|f_{\theta_1}(x) - f_{\theta_2}(x)|\leq M(x)\|\theta_1 - \theta_2\|$. Moreover, assume that $\mb EM^{p}(X)<\infty$ for some $p\geq 1$. Then 
\[
\mb E \sup_{\theta_1, \theta_2\in \Theta'} \l(\frac{1}{\sqrt{n}} \l| \sum_{j=1}^{n}  \l(f_{\theta_1}(X_j) - f_{\theta_2}(X_j) - P(f_{\theta_1} - f_{\theta_2})\r)\r| \r)^p 
\leq C(p) d^{p/2} \diam^p(\Theta',\|\cdot\|) \mb E \|M \|^p_{L_2(\Pi_n)}
\]
and 
\ml{
\mb E \sup_{\theta\in \Theta'} \l(\frac{1}{\sqrt{n}} \l| \sum_{j=1}^{n}  \l(f_{\theta}(X_j) - P f_{\theta_1} \r)\r| \r)^p 
\\
\leq C(p) \l( d^{p/2} \diam^p(\Theta',\|\cdot\|) \mb E \|M \|^p_{L_2(\Pi_n)} + \mb E^{1\wedge \frac{p}{2}} \l|f_{\theta_0}(X) - P f_{\theta_0}\r|^{2\vee p} \r)
}
for any $\theta_0\in \Theta'$.
\end{lemma}
\begin{proof}
Symmetrization inequality yields that 
\begin{multline*}
\mb E \sup_{\theta_1, \theta_2\in \Theta'} \l(\frac{1}{\sqrt{n}} \l| \sum_{j=1}^{n}  \l(f_{\theta_1}(X_j) - f_{\theta_2}(X_j) - P(f_{\theta_1} - f_{\theta_2})\r)\r| \r)^p 
\\
\leq C(p)\mb E \sup_{\theta_1, \theta_2\in \Theta'} \l(\frac{1}{\sqrt{n}} \l| \sum_{j=1}^{n}  \eps_j\l(f_{\theta_1}(X_j) - f_{\theta_2}(X_j) \r)\r| \r)^p 
\\
= C(p)\mb E_X \mb E_\eps \sup_{\theta_1, \theta_2\in \Theta'} \l(\frac{1}{\sqrt{n}} \l| \sum_{j=1}^{n}  \eps_j\l(f_{\theta_1}(X_j) - f_{\theta_2}(X_j) \r)\r| \r)^p.
\end{multline*}
As the process $f\mapsto \frac{1}{\sqrt{n}}  \sum_{j=1}^{n}  \eps_j\l(f_{\theta_1}(X_j) - f_{\theta_2}(X_j) \r)$ is sub-Gaussian conditionally on $X_1,\ldots,X_n$, its (conditional) $L_p$-norms are equivalent to $L_1$ norm. Hence, Dudley's entropy bound (see Theorem 2.2.4 in \cite{wellner1}) implies that 
\begin{multline*}
\mb E_\eps \sup_{\theta_1, \theta_2\in \Theta'} \l(\frac{1}{\sqrt{n}} \l| \sum_{j=1}^{n}  \eps_j\l(f_{\theta_1}(X_j) - f_{\theta_2}(X_j) \r)\r| \r)^p 
\\
\leq C(p) \l( \mb E_\eps \sup_{\theta_1, \theta_2\in \Theta'} \frac{1}{\sqrt{n}} \l| \sum_{j=1}^{n}  \eps_j\l(f_{\theta_1}(X_j) - f_{\theta_2}(X_j) \r)\r|  \r)^p
\\
\leq C(p)\l( \int_{0}^{D_n(\Theta')} \log^{1/2}N(z,T_n,d_n) dz \r)^p,
\end{multline*}
where $d^2_n(f_{\theta_1},f_{\theta_2}) = \frac{1}{n}\sum_{j=1}^n \l( f_{\theta_1}(X_j) - f_{\theta_2}(X_j)\r)^2$, 
$T_n = \l\{ (f_{\theta}(X_1),\ldots, f_{\theta}(X_n)), \ \theta \in \Theta'\r\}\subseteq \mb R^n$ and $D_n(\Theta')$ is the diameter of $\Theta$ with respect to the distance $d_n$. 
As $f_{\theta}(\cdot)$ is Lipschitz in $\theta$, we have that 
$d^2_n(f_{\theta_1},f_{\theta_2}) \leq \frac{1}{n}\sum_{j=1}^n M^2(X_j) \|\theta_1 - \theta_2\|^2$, implying that $D_n(\Theta')\leq \|M\|_{L_2(\Pi_n)}\diam(\Theta',\|\cdot\|)$ and 
\[
\log N(z,T_n,d_n)\leq \log N\l(z/\| M \|_{L_2(\Pi_n)},\Theta',\|\cdot\|\r) \leq \log\l(C\frac{\diam(\Theta', \|\cdot\|)\, \|M \|_{L_2(\Pi_n)}}{z}\r)^d.
\] 
Therefore, 
\[
\l( \int_{0}^{D_n(\Theta')} \log^{1/2}N(z,T_n,d_n) dz \r)^p \leq C d^{p/2} \l(\diam(\Theta',\|\cdot\|) \cdot \| M \|_{L_2(\Pi_n)}\r)^p
\]
and 
\[
\mb E_X \mb E_\eps \sup_{\theta_1, \theta_2\in \Theta'} \l(\frac{1}{\sqrt{n}} \l| \sum_{j=1}^{n}  \eps_j\l(f_{\theta_1}(X_j) - f_{\theta_2}(X_j) \r)\r| \r)^p \leq Cd^{p/2} \diam^p(\Theta',\|\cdot\|) \mb E \|M \|_{L_2(\Pi_n)}^p.
\]
\end{proof}
\noindent Proof of the second bound follows from the triangle inequality 
\ml{
\mb E \sup_{\theta\in \Theta'} \l( \l| \sum_{j=1}^{n} \frac{1}{\sqrt{n}} \l(f_{\theta}(X_j) - P f_{\theta_1} \r)\r| \r)^p  
\leq 
C(p)\Bigg(  \mb E\l| \frac{1}{\sqrt{n}} \sum_{j=1}^{n}  (f_{\theta_0}(X_j) - Pf_{\theta_0})\r|^p
\\
+\mb E \sup_{\theta \in \Theta'} \l(\l| \frac{1}{\sqrt{n}} \sum_{j=1}^{n}  \l(f_{\theta}(X_j) - f_{\theta_0}(X_j) - P(f_{\theta} - f_{\theta_0})\r)\r| \r)^p \Bigg),
}
and Rosenthal's inequality \citep{ibragimov2001best} applied to the term \linebreak[4] $\mb E\l| \frac{1}{\sqrt{n}} \sum_{j=1}^{n}  (f_{\theta_0}(X_j) - Pf_{\theta_0})\r|^p$.

\begin{proposition}
\label{lemma:denom}
Let $\theta\in \Theta$, and set $\delta_0:=r(\theta)$, where $r(\theta)$ is defined in assumption \ref{ass:3}. Then for all $0<\delta\leq \delta_0$,
\ml{
\mb E\sup_{\|\theta' - \theta\|\leq \delta} \l| \frac{1}{k}\sum_{j=1}^k \rho'' \l( \frac{\sqrt{n}}{\Delta_n}\l(\bar L_j(\theta',\theta_0) - L(\theta',\theta_0)\r) \r) - \mb E \rho'' \l( \frac{\sqrt{n}}{\Delta_n}\l(\bar L_1(\theta',\theta_0) - L(\theta',\theta_0)\r) \r) \r| 
\\
\leq \frac{8}{\Delta_n \sqrt{k}}\mb E\sup_{\|\theta' - \theta\|\leq \delta} \l| \frac{1}{\sqrt{N}}\sum_{j=1}^N \l( \ell(\theta',X_j) - \ell(\theta_0,X_j) - L(\theta',\theta_0)\r) \r|
}
As a consequence,
\ml{
\sup_{\|\theta' - \theta\|\leq \delta} \l| \frac{1}{k}\sum_{j=1}^k \rho'' \l( \frac{\sqrt{n}}{\Delta_n}\l(\bar L_j(\theta',\theta_0) - L(\theta',\theta_0)\r) \r) - \mb E \rho'' \l( \frac{\sqrt{n}}{\Delta_n}\l(\bar L_1(\theta',\theta_0) - L(\theta',\theta_0)\r) \r) \r| 
\\
\leq \frac{8s}{\Delta_n \sqrt{k}} \mb E\sup_{\|\theta' - \theta\|\leq \delta} \l| \frac{1}{\sqrt{N}}\sum_{j=1}^N \l( \ell(\theta',X_j) - \ell(\theta_0,X_j) - L(\theta',\theta_0) \r) \r|
}
with probability at least $1-\frac{1}{s}$, where $C>0$ is an absolute constant. Moreover, the bound still holds if $\rho''$ is replaced by $\rho'''$, up to the change in constants. 
\end{proposition}
\begin{proof}
Let $\eps_1,\ldots,\eps_k$ be i.i.d. Rademacher random variables independent of $X_1,\ldots,X_N$, and note that by symmetrization and contraction inequalities for the Rademacher sums \citep{LT-book-1991},
\begin{multline*}
\mb E\sup_{\|\theta' - \theta\|\leq \delta} \l| \frac{1}{k}\sum_{j=1}^k \rho''\l( \frac{\sqrt{n}}{\Delta_n}\l(\bar L_j(\theta',\theta_0) - L(\theta',\theta_0)\r) \r) - \mb E \rho'' \l( \frac{\sqrt{n}}{\Delta_n}\l(\bar L_1(\theta',\theta_0) - L(\theta',\theta_0)\r) \r) \r|   \\
\leq 2 \, \mb E\sup_{\|\theta' - \theta\|\leq \delta} \frac{1}{k} \l| \sum_{j=1}^k \eps_j \l( \rho''\l( \frac{\sqrt{n}}{\Delta_n}\l(\bar L_j(\theta',\theta_0) - L(\theta',\theta_0)\r) \r) - \rho''(0) \r) \r|
\\
\leq \frac{4 L(\rho'')}{\Delta_n \sqrt{k}} \, \mb E\sup_{\|\theta' - \theta\|\leq \delta}\l| \sum_{j=1}^k \eps_j \frac{\sqrt{n}}{\sqrt{k}} \l(\bar L_j(\theta',\theta_0) - L(\theta',\theta_0)\r)\r|,
\end{multline*}
where we used the fact that $\phi(x):=\rho'' \l( \frac{\sqrt{n}}{\Delta_n} x \r) - \rho''\l( 0 \r)$ is Lipschitz continuous (in fact, assumption \ref{ass:1} implies that the Lipschitz constant is equal to $1$) and satisfies $\phi(0)=0$. 
Now, desymmetrization inequality \citep[Lemma 2.3.6 in][]{wellner1} implies that 
\ml{
\mb E \sup_{\|\theta' - \theta\|\leq \delta}\l| \sum_{j=1}^k \eps_j \frac{\sqrt{n}}{\sqrt{k}} \l(\bar L_j(\theta',\theta_0) - L(\theta',\theta_0)\r)\r|
\\ 
\leq 
\frac{2}{\sqrt{N}} \mb E\sup_{\|\theta' - \theta\|\leq \delta} \l| \sum_{j=1}^N \l( \ell(\theta',X_j) - \ell(\theta_0,X_j) - L(\theta',\theta_0)\r) \r|,
}
hence the claim follows. 

The fact that $\rho''$ can be replaced by $\rho'''$ follows along the same lines as $\rho'''$ is Lipschitz continuous and $\|\rho'''\|_\infty<\infty$ by assumption \ref{ass:1}.
\end{proof}

\section{Numerical experiment: logistic regression.}
\label{sec:logistic}

As a simple proof of concept, we implemented the gradient descent-ascent algorithm mentioned in section \ref{sec:computational} for the problem of logistic regression; for a detailed discussion of closely related methods, we refer the reader to \citep{lecue2017robust,mathieu2021excess}. 
In the present setup, the dataset consists of pairs $(Z_j,Y_j)\in \mb R^2\times\{\pm1\}$, where the marginal distribution of the labels is uniform on $\{\pm1\}$, while the conditional distributions of $Z_j$'s are normal, that is, $\mathrm{Law}\l(Z_1\,|\,Y_1=1\r) = \mathcal{N}\l((-1,-1)^T,4I_2\r)$, $\mathrm{Law}\l(Z\,|\,Y=-1\r) \sim \mathcal{N}\l((1,1),4I_2\r)$, and $\pr{Y=1}=\pr{Y=-1}=1/2$; here, $I_2$ stands for the $2\times2$ identity matrix. The loss function is defined as $\ell(\theta,Z,Y) = \log\l( 1 + e^{-Y\langle\theta,Z\rangle}\r), \ \theta\in \mb R^2$. 
The dataset includes $40$ outliers for which $Y_j\equiv 1$ and $Z\sim \mathcal{N}\l((25,10),0.25I_2\r)$. The sample of $500$ ``informative'' observations was generated, along with $40$ outliers, and we compared the performance of robust method proposed in this paper with the standard logistic regression, as implemented in the Scikit-learn package \citep{pedregosa2011scikit}, that is known to be sensitive to outliers. Results of the experiment are presented in figure \ref{fig:scatter_plots} and illustrate the robustness of proposed approach. 
\begin{figure}[h]
\centering
    \includegraphics[width=\textwidth]{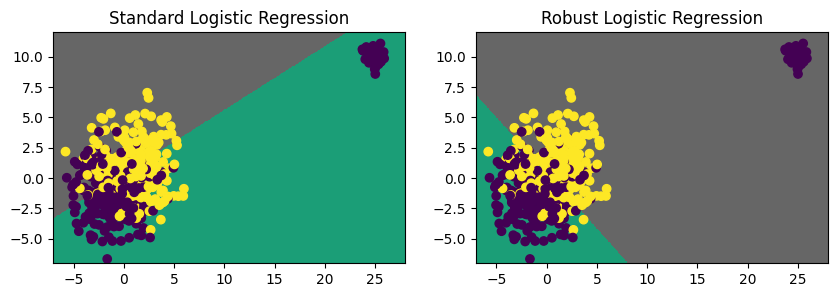}
\caption{Scatter plot of $N =540$ samples from the training dataset ($500$ informative observations and $40$ outliers). The color of the points correspond to their labels and the background color -- to the predicted labels (gray region corresponds to yellow labels and green -- to purple labels).
\label{fig:scatter_plots}}
\end{figure}

\end{document}

\begin{lemma}
\label{lemma:multiplier}
Let $Z_1(f),\ldots, Z_k(f), \ f\in \m F$ be i.i.d. stochastic processes such that $\mb E\sup_{f\in \m F}|Z_1(f)|\leq \infty$. Assume that $g_1,\ldots,g_k$ are i.i.d. $N(0,1)$ random variables independent of $Z_1,\ldots,Z_k$. Similarly, let $\eps_1,\ldots,\eps_k$ be i.i.d. Rademacher random variables independent of $Z_1,\ldots,Z_k$. Then the following inequality holds:
\[
\mb E\sup_{f\in \m F} \l( \frac{1}{\sqrt k} \sum_{j=1}^k g_j Z_j(f)\r)^2 
\leq C \max_{m=1,\ldots,k} \mb E\sup_{f\in \m F} \l( \frac{1}{\sqrt m} \sum_{j=1}^m \eps_j Z_j(f)\r)^2
\]
for some absolute constant $C>0$. 
\end{lemma}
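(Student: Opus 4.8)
\emph{Proof plan.} The idea is to prove a squared-supremum version of the classical multiplier inequality (cf.\ Lemma~2.9.1 in \cite{wellner1}) via a layer-cake decomposition of the Gaussian weights. A crude argument --- writing $g_j=\sign(g_j)|g_j|$, sorting the magnitudes, and applying Abel summation together with Doob's maximal inequality for the submartingale $m\mapsto\sup_{f}\big|\sum_{i\le m}\sign(g_i)Z_i(f)\big|$ --- does yield a bound, but with an unwanted factor $\mb E\big[\max_{j\le k}g_j^2\big]\asymp\log k$ multiplying $\max_{m}\frac1m\mb E\sup_f\big(\sum_{i\le m}\eps_i Z_i(f)\big)^2$. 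Since the statement demands an absolute constant, the point is to trade $\mb E\max_j g_j^2$ for the \emph{finite} quantity $\|g_1\|_{2,1}^2:=\big(\int_0^\infty\sqrt{\pr{|g_1|>t}}\,dt\big)^2$, which is exactly what the layer cake achieves. We may assume $\mb E\sup_f Z_1(f)^2<\infty$, since otherwise the right-hand side of the lemma is already infinite (take $m=1$), and then all expectations below are finite.

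\emph{Step 1: decomposition.} Write $g_j=\eps_j^\ast|g_j|$ with $\eps_j^\ast:=\sign(g_j)$; the sequence $(\eps_j^\ast)_{j\le k}$ is i.i.d.\ Rademacher and independent of both $(|g_j|)_{j\le k}$ and $(Z_j)_{j\le k}$. Using $|g_j|=\int_0^\infty\mathbf 1\{|g_j|>t\}\,dt$ and Fubini,
\[
\sum_{j=1}^k g_j Z_j(f)=\int_0^\infty\Big(\sum_{j\in J_t}\eps_j^\ast Z_j(f)\Big)\,dt,\qquad J_t:=\{\,j\le k:\ |g_j|>t\,\}.
\]
Hence, pointwise, $\sup_f\big|\sum_j g_j Z_j(f)\big|\le\int_0^\infty h_t\,dt$ with $h_t:=\sup_f\big|\sum_{j\in J_t}\eps_j^\ast Z_j(f)\big|$, and Minkowski's integral inequality in $L_2(\Omega)$ gives $\big\|\sup_f\big|\sum_j g_j Z_j\big|\big\|_{L_2}\le\int_0^\infty\|h_t\|_{L_2}\,dt$.

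\emph{Step 2: per-layer control and assembly.} Conditionally on $(|g_j|)_{j\le k}$ the set $J_t$ is deterministic, of cardinality $N_t:=|J_t|$, and $(\eps_j^\ast Z_j)_{j\in J_t}$ is a family of $N_t$ i.i.d.\ copies of $\eps_1 Z_1$ (since $(\eps_j^\ast,Z_j)_{j\le k}$ is independent of $(|g_j|)_{j\le k}$, and $\eps_1^\ast Z_1$ has the law of $\eps_1 Z_1$). Therefore $\mb E\big[h_t^2\mid (|g_j|)\big]=\Phi(N_t)$, where $\Phi(m):=\mb E\sup_f\big(\sum_{i=1}^m\eps_i Z_i(f)\big)^2$ and $\Phi(0):=0$. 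Since $0\le N_t\le k$ and $\Phi(m)\le m\,A_k$ for $A_k:=\max_{1\le m\le k}\Phi(m)/m$, taking expectations gives $\mb E h_t^2=\mb E\Phi(N_t)\le A_k\,\mb E N_t=A_k\,k\,\pr{|g_1|>t}$. Combining with Step~1,
\[
\Big\|\sup_f\big|\sum_{j}g_j Z_j\big|\Big\|_{L_2}\le\sqrt{k\,A_k}\int_0^\infty\sqrt{\pr{|g_1|>t}}\,dt=\sqrt{k\,A_k}\,\|g_1\|_{2,1},
\]
and squaring and dividing by $k$ yields $\frac1k\mb E\sup_f\big(\sum_j g_j Z_j(f)\big)^2\le\|g_1\|_{2,1}^2\,A_k$, i.e.\ the claim with $C=\|g_1\|_{2,1}^2<\infty$ (finiteness from the Gaussian tail bound $\pr{|g_1|>t}\le e^{-t^2/2}$).

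\emph{Main obstacle.} There is nothing deep once the layer-cake idea is identified; the essential difficulty is precisely the one above --- keeping the constant independent of $k$, which rules out the naive bound by $\max_j g_j^2$ and forces the stratification-by-magnitude argument. The steps that need care are the conditional-i.i.d.\ structure of the layers (immediate from independence of $\sign(g_j)$ from $|g_j|$ and $Z_j$) and the Fubini/Minkowski manipulations, both routine; the only caveat is the usual one in this setting --- all ``$\sup_f$'' are to be read as outer expectations when $\m F$ is not countable.
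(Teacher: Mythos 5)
Your proof is correct, and it achieves (a sharper version of) the same $\|g\|_{2,1}$-type constant as the paper, but the technical execution differs enough to be worth noting. Both arguments implement the familiar multiplier-inequality scheme (cf.\ Lemma~2.9.1 in \cite{wellner1}): factor $g_j=\sign(g_j)|g_j|$, stratify by magnitude, and reduce each stratum to an $m$-term symmetrized sum. The paper carries this out discretely: it sorts $|g_1|,\dots,|g_k|$ into order statistics $\tilde g_1\geq\cdots\geq\tilde g_k$, Abel-sums $\tilde g_j=\sum_{i\geq j}(\tilde g_i-\tilde g_{i+1})$, exchanges the order of summation, expands the square, and applies Cauchy--Schwarz twice to isolate $\max_{m\leq k}\mb E\sup_f\big|\frac{1}{\sqrt m}\sum_{i\leq m}\eps_i Z_i(f)\big|^2$ and the scalar $\mb E\big(\frac{1}{\sqrt k}\sum_i \sqrt i(\tilde g_i-\tilde g_{i+1})\big)^2$; the replacement of $|g_j|$ by its sorted version $\tilde g_j$ relies tacitly on the conditional exchangeability of $(\eps_j Z_j)_{j\leq k}$ given $(|g_j|)_{j\leq k}$. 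You instead use the continuous layer cake $|g_j|=\int_0^\infty\mathbf 1\{|g_j|>t\}\,dt$ together with Fubini and Minkowski's integral inequality in $L_2$, and then exploit that the layer set $J_t$ is deterministic given $(|g_j|)$ while $(\eps_j^\ast Z_j)_{j\in J_t}$ remains i.i.d.; this replaces the double Cauchy--Schwarz with the single linear bound $\Phi(N_t)\leq N_t\,\max_m\Phi(m)/m$. The net effect is essentially equivalent, but your route avoids the re-indexing/exchangeability subtlety and yields the clean explicit constant $C=\|g_1\|_{2,1}^2$, which is slightly sharper than the bound obtained in the paper via the $\frac{1}{1+t^2}$-weighted Cauchy--Schwarz step. (The ``crude'' $\log k$ pitfall you mention would only arise if one discarded the Abel/layer-cake structure and bounded $\max_j g_j^2$ directly; both the paper and your argument avoid it.)
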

\begin{proof}
Let $\tilde g_1\geq \ldots \geq \tilde g_k \geq \tilde g_{k+1}:=0$ be the reversed order statistics corresponding to $|g_1|,\ldots,|g_k|$, and note that $\tilde g_j = \sum_{i=j}^k (\tilde g_i - \tilde g_{i+1})$. Therefore, 
\begin{multline*}
\mb E \sup_{f\in \m F}\l(\frac{1}{} \sum_{j=1}^k g_j Z_j(f)\r)^2 = \mb E_{|g|}\mb E_{\eps,X}  \sup_{f\in \m F}\l( \frac{1}{\sqrt k}\sum_{j=1}^k \eps_j|g_j| Z_j(f)\r)^2 
\\
=\mb E_{|g|}\mb E_{\eps,X}  \sup_{f\in \m F}\l(\frac{1}{\sqrt k} \sum_{j=1}^k \eps_j \tilde g_j Z_j(f)\r)^2 = 
\mb E \sup_{f\in \m F}\l( \frac{1}{\sqrt k}\sum_{j=1}^k \sum_{i=j}^k (\tilde g_i - \tilde g_{i+1})\eps_j Z_j(f)\r)^2
\\
= \mb E \sup_{f\in \m F}\l( \frac{1}{\sqrt k}\sum_{i=1}^k \sqrt{i}(\tilde g_i - \tilde g_{i+1})\sum_{j=1}^i \frac{1}{\sqrt i}\eps_j Z_j(f)\r)^2
\\
\leq \mb E_{|g|}\mb E_{\eps,X}\sup_{f\in \m F}\l(\frac{1}{\sqrt k}\sum_{i=1}^k \sqrt{i}(\tilde g_i - \tilde g_{i+1}) \l| \frac{1}{\sqrt i}\sum_{j=1}^i \eps_j Z_j(f)\r|\r)^2
\\
= \mb E_{|g|}  \frac{1}{k}\sum_{i,l=1}^k \l[\sqrt{i}\sqrt{l}(\tilde g_i - \tilde g_{i+1})(\tilde g_l - \tilde g_{l+1})
\mb E_{\eps,X}\l( \l| \frac{1}{\sqrt i}\sum_{j=1}^i \eps_j Z_j(f)\r|\l| \frac{1}{\sqrt l}\sum_{v=1}^l \eps_v Z_v(f)\r| \r)\r]
\\
\leq \max_{i=1,\ldots,k}\mb E_{\eps,X} \l| \frac{1}{\sqrt i}\sum_{j=1}^i \eps_j Z_j(f)\r|^2 
\mb E \l( \frac{1}{\sqrt k}\sum_{i=1}^k \sqrt{i}(\tilde g_i - \tilde g_{i+1}) \r)^2.
\end{multline*}
Finally, note that 
\begin{multline*}
\mb E \l( \frac{1}{\sqrt k}\sum_{i=1}^k \sqrt{i}(\tilde g_i - \tilde g_{i+1}) \r)^2 
\leq \mb E \l( \frac{1}{\sqrt k} \int_0^\infty \sqrt{\#\l\{ i \leq k: \ |g_i| \geq t\r\}}dt \r)^2 
\\
= \mb E \l( \frac{1}{\sqrt k} \int_0^\infty \frac{1}{\sqrt{1+t^2}}\sqrt{1+t^2}\sqrt{\#\l\{ i \leq k: \ |g_i| \geq t\r\}}dt \r)^2 
\\
\leq \int_0^\infty \frac{dt}{1+t^2} \int_0^\infty (1+t^2) \pr{|g|\geq t}dt 
\leq \pi \int_0^\infty (1+t^2) e^{-t^2/2}dt <\infty.
\end{multline*}

\end{proof}

\begin{lemma}
\label{lemma:bias}
Assume that $\mb E \l\| \pd_\theta \ell(\theta,X) \r\|^{2+\tau}<\infty$ for some $\tau\in (0,1]$. Then 
\ml{
\l\| \mb E \l[ \rho'' \l( \frac{\sqrt{n}}{\Delta_n}\l(\bar L_1(\theta) - L(\theta)\r) \r) \sqrt{n}\l( \pd_\theta \bar L_1(\theta) -  \pd_\theta L(\theta) \r) \r] \r\|_\infty 
\\
\leq C(\rho,\Delta_n) \frac{\mb E\l| \ell(\theta,X) -\ell(\theta',X) - L(\theta)\r|^{2+\tau} \l( \var^{1/2}\l(\ell(\theta,X) \r)\vee \l\| V_\theta^{1/2}(\theta) \r\| \r)}{n^{\tau/2}},
}
where $V(\theta)$ is the covariance matrix of the random vector $\pd_\theta\ell(\theta,X)$.
\end{lemma}
\begin{proof}
It suffices to establish the upper bound for each coordinate of the vector 
\[
\rho'' \l( \frac{\sqrt{n}}{\Delta_n}\l(\bar L_1(\theta) - L(\theta)\r) \r) \sqrt{n}\l( \pd_\theta \bar L_1(\theta) -  \pd_\theta L(\theta) \r).
\] 
Without loss of generality, it suffices to consider the first coordinate. 
To this end, let $\xi_j = \frac{1}{\sqrt n}(\ell(\theta,X_j) - \ell(\theta',X_j) - L(\theta))$ and $\eta_j =\frac{1}{\sqrt n} \l\langle\pd_\theta(\ell(\theta,X_j) - L(\theta)), e_1 \r\rangle ,  \ j=1,\ldots,n$. 
Moreover, let $(W_j,Z_j), \ j=1,\ldots,n$ be a sequence of i.i.d. centered bivariate normal vectors such that $\cov(W_j,Z_j)=\cov(\xi_j,\eta_j)$. 
Next, observe that $\mb E\rho''\l( \frac{1}{\Delta_n} \sum_{j=1}^n W_j\r)\l( \sum_{j=1}^n Z_j \r) = 0$. Indeed, the random vector $(S_{n,Z},S_{n,W}) = \l(  \sum_{j=1}^n W_j,  \sum_{j=1}^n Z_j\r)$ is jointly Gaussian, hence  
\ml{
\mb E\rho''\l( \frac{1}{\Delta_n} S_{n,W}\r)S_{n,Z} = \mb E\l[\rho''\l( \frac{1}{\Delta_n} S_{n,W} \r) S_{n,Z} | S_{n,W}\r] 
\\
= \mb E \rho''\l( \frac{1}{\Delta_n} S_{n,W} \r) \mb E\l[ S_{n,Z}| S_{n,W} \r] 
= \alpha_{Z,W}\mb E \rho''\l( \frac{1}{\Delta_n} S_{n,W} \r) S_{n,W} =0, 
}
where $\alpha_{Z,W} = \frac{\mb E S_{n,W}S_{n,Z}}{\mb E S^2_{n,W}}$ and the last equality follows since the function $x\mapsto \rho''\l( \frac{x}{\Delta_n}\r) x$ is odd. 
We can write $\mb E \rho''\l( \frac{\sum_{j=1}^n \xi_j}{\Delta_n}\r)\sum_{j=1}^n \eta_j$ as 
\[
\mb E \rho''\l( \frac{\sum_{j=1}^n \xi_j}{\Delta_n}\r)\sum_{j=1}^n \eta_j = 
\sum_{j=1}^n \eta_j \l(  \rho''\l( \frac{\sum_{i=1}^n \xi_i}{\Delta_n}\r) -  \rho''\l( \frac{\sum_{i\ne j} \xi_i}{\Delta_n}\r)\r),
\]
where we used the fact that $\mb E \eta_j  \rho''\l( \frac{\sum_{i\ne j} \xi_i}{\Delta_n}\r) = 0$ for all $j$. 
Taylor expansion of the the function $\rho''\l( \frac{\sum_{i=1}^n \xi_i}{\Delta_n}\r)$ around the point $\sum_{i\ne j} \xi_i$ gives that 
\ml{
\rho''\l( \frac{\sum_{i=1}^n \xi_i}{\Delta_n}\r) - \rho''\l( \frac{\sum_{i\ne j} \xi_i}{\Delta_n}\r) =  \rho'''\l( \frac{\sum_{i\ne j} \xi_i}{\Delta_n}\r) \frac{\xi_j}{\Delta_n} 
\\
+ \l( \rho'''\l( \frac{\sum_{i\ne j} \xi_i + \zeta\cdot \xi_j}{\Delta_n}\r) - \rho'''\l( \frac{\sum_{i\ne j} \xi_i}{\Delta_n}\r) \r)\frac{\xi_j}{\Delta_n}
} 
for some $\zeta\in [0,1]$. Therefore, 
\ml{
\mb E \rho''\l( \frac{\sum_{j=1}^n \xi_j}{\Delta_n}\r)\l(\sum_{j=1}^n \eta_j \r)
\\
= \mb E \rho''\l( \frac{\sum_{j=1}^n \xi_j}{\Delta_n}\r)\l(\sum_{j=1}^n \eta_j \r) - \mb E\rho''\l( \frac{1}{\Delta_n} \sum_{j=1}^n W_j\r)\l( \sum_{j=1}^n Z_j \r) 
\\
= \sum_{j=1}^n \frac{1}{\Delta_n}\mb E\l( \xi_j \eta_j \rho'''\l( \frac{\sum_{i\ne j} \xi_i}{\Delta_n}\r)  - W_j Z_j \rho'''\l( \frac{\sum_{i\ne j} W_i}{\Delta_n}\r)\r)
\\
+ \sum_{j=1}^n \frac{1}{\Delta_n} \mb E \eta_j \xi_j \l( \rho'''\l( \frac{\sum_{i\ne j} \xi_i + \zeta_j \cdot\xi_j}{\Delta_n}\r) - \rho'''\l( \frac{\sum_{i\ne j} \xi_i}{\Delta_n}\r) \r)
\\
-  \sum_{j=1}^n \frac{1}{\Delta_n} \mb E W_j Z_j \l( \rho'''\l( \frac{\sum_{i\ne j} W_i + \zeta'_j \cdot W_j}{\Delta_n}\r) - \rho'''\l( \frac{\sum_{i\ne j} W_i}{\Delta_n}\r) \r).
}
Using the fact that $\mb E\xi_j\eta_j = \mb EW_j Z_j$, we deduce that 
\ml{
\sum_{j=1}^n \frac{1}{\Delta_n}\mb E\l( \xi_j \eta_j \rho'''\l( \frac{\sum_{i\ne j} \xi_i}{\Delta_n}\r)  - W_j Z_j \rho'''\l( \frac{\sum_{i\ne j} W_i}{\Delta_n}\r)\r)
\\ 
= n\frac{\mb EW_j Z_j}{\Delta_n} \mb E\l( \rho'''\l( \frac{\sum_{i\ne j} \xi_i}{\Delta_n}\r) - \rho'''\l( \frac{\sum_{i\ne j} W_i}{\Delta_n}\r) \r).
}
It remains to estimate $\mb E\l( \rho'''\l( \frac{\sum_{i\ne j} \xi_i}{\Delta_n}\r) - \rho'''\l( \frac{\sum_{i\ne j} W_i}{\Delta_n}\r) \r)$. 
Applying the standard Lindeberg's replacement method (see Lemma \ref{lemma:lindeberg} above), it is easy to deduce that whenever $\rho^{(5)}(\cdot)$ is bounded and Lipschitz continuous, 
\[
\mb E\l( \rho'''\l( \frac{\sum_{i\ne j} \xi_i}{\Delta_n}\r) - \rho'''\l( \frac{\sum_{i\ne j} W_i}{\Delta_n}\r) \r) 
\leq C(\rho) \frac{\mb E |\xi_1|^{2+\tau} }{\Delta_n^2 n^{\tau/2}}.
\]
Moreover, as $\|\rho'''\|_\infty<\infty$ and $\rho'''$ is Lipschitz continuous by assumption, $|\rho'''(x) - \rho'''(y)|\leq C(\rho,\gamma)\| x - y \|^{\gamma}$ for any $\gamma\in(0,1]$. Taking $\gamma = \tau/2$, we see that
\ml{
\l| \sum_{j=1}^n \frac{1}{\Delta_n} \mb E \eta_j \xi_j \l( \rho'''\l( \frac{\sum_{i\ne j} \xi_i + \tau_j \xi_j}{\Delta_n}\r) - \rho'''\l( \frac{\sum_{i\ne j} \xi_i}{\Delta_n}\r) \r) \r| 
\\
\leq \frac{C(\rho,\tau)}{\Delta_n} \sum_{j=1}^n \mb E\l| \eta_j \xi_j^{1+\tau/2}  \r| 
\leq \frac{C(\rho,\tau)}{\Delta_n} n\,\mb E^{1/2} \eta^2 \mb E^{1/2} |\xi_j|^{2+ \tau} \leq \frac{C}{n^{\tau/2}}.
}
\end{proof}